\newcommand{\e}{{\rm e}}
\newcommand{\ind}{\stackrel{\mbox{\scriptsize ind}}{\sim}}
\newcommand{\deq}{\stackrel{\mbox{\scriptsize d}}{=}}
\newcommand{\indicator}{\ensuremath{{1}}}
\newcommand{\calC}{\mathcal{C}}
\newcommand{\calT}{\mathcal{T}}
\newcommand{\calM}{\mathcal{M}}
\newcommand{\calP}{\mathcal{P}}
\newcommand{\dd}{\mathrm d}
\newcommand{\plaw}{\mathsf{P}}
\newcommand{\partder}[1]{\frac{\partial}{ \partial #1}}
\newcommand{\E}{E}
\newcommand{\Xcr}{\mathscr{X}}
\newcommand{\Ycr}{\mathscr{Y}}
\newcommand{\Acr}{\mathscr{A}}
\NewDocumentCommand{\evalat}{sO{\big}mm}{%
  \IfBooleanTF{#1}
   {\mleft. #3 \mright|_{#4}}
   {#3#2|_{#4}}%
}
\newcommand{\Pp}{\mathbb{P}}
\newcommand{\X}{\mathbb{X}}
\newcommand{\M}{\mathbb{M}}
\newcommand{\Y}{\mathbb{Y}}
\newcommand{\R}{\mathbb{R}}
\newcommand{\prob}{{\rm pr}}
\renewcommand{\mid}{\ensuremath{\,|\,}}
\def\simind{\stackrel{\mbox{\scriptsize{\rm ind}}}{\sim}}
\date{}
\title{Palm distributions of superposed point processes \\for statistical inference}
\author[1]{Mario Beraha}
\author[1]{Federico Camerlenghi}
\author[2]{Lorenzo Ghilotti}
\affil[1]{Department of Economics, Management and Statistics, University of Milano-Bicocca, Milan, Italy}
\affil[2]{Department of Statistical Science, Duke University, Durham, NC, USA}
\providecommand{\keywords}[1]{
  \small 
  \textbf{\textit{Keywords:}} #1
  \normalsize
}
\newtheorem{theorem}{Theorem}
\newtheorem{corollary}{Corollary}
\newtheorem{lemma}{Lemma}
\newtheorem{proposition}{Proposition}
\theoremstyle{definition}
\theoremstyle{remark}
\newtheorem{example}{Example}
\begin{document}

\maketitle

\begin{abstract}
Palm distributions play a central role in the study of point processes and their associated summary statistics. In this paper, we characterize the Palm distributions of the superposition of independent point processes, establishing a simple mixture representation depending on the point processes' Palm distributions and moment measures.
We explore two statistical applications enabled by our main result. First, we consider minimum contrast estimation for corrupted point processes. Second, we investigate the class of shot noise Cox processes and derive explicit expressions for their higher-order Palm distributions. In the finite case, we further obtain a tractable expression for the Janossy density, which plays the role of a likelihood function and thus can be used for new likelihood-based inference strategies.
Extensions to the superposition of multiple point processes and to higher-order Palm distributions are also presented.
\end{abstract}

\keywords{Mat\'ern cluster process, minimum contrast estimation, noisy observations, shot noise Cox process, summary statistics, superposition of point processes}

\section{Introduction} \label{sec1}

Real-world point patterns often combine several structured components, some regular or inhomogeneous, others explicitly clustered, and may also include random noise. 
Semiconductor wafer defect maps \citep{Borgons2021}, disease-case locations in epidemiology \citep{meyer2017surveillance},  cellular network base-station layouts \citep{choi2017gridppp}, mixed age tree stands in ecology \citep{ngobieng2011forest}, and earthquake aftershock sequences \citep{ogata1998space} can all be regarded as superpositions of two (or more) independent point processes.
While the superposition operation is trivial at the model level, from an inferential standpoint, superposed point processes are notoriously awkward. 
Standard tools such as minimum contrast estimation rely on closed-form expressions for second-order summaries, such as the $J$-function, Ripley's $K$-function, and Besag's $L$-function. However, those summaries remain unknown for a generic superposition \citep{MoWaBook03,diggle2013statistical}. 
Hence, practitioners need to rely on complex algorithms often developed on a case-by-case basis \citep[e.g.,][]{tanaka2014superposed,xu2018hawkes}.

In this work we first focus on the Palm distributions of the superposition of two independent point processes. Palm distributions \citep{BaBlaKa} are key mathematical objects in the study of point processes, describing the conditional behavior of a process given the location of one or more of its points or \emph{atoms}.
We establish a mixture representation for the Palm distributions of the superposed process, in which the mixture components are sums of the two original processes and their Palm versions, weighted by their respective mean measures.
We also show that our analysis easily extends to the superposition of more than two independent processes.

We demonstrate the practical usefulness of our result for statistical inference in two settings.
First, we consider fitting a point process contaminated by random background noise via minimum contrast estimation.
Indeed, through the Palm distributions, it is straightforward to obtain functional summary statistics, enabling robust and fast inference via minimum contrast. 
This methodology is particularly relevant in applied contexts such as the analysis of spatial defect structures in semiconductor manufacturing \citep{Borgons2021}.

Second, we investigate structural and distributional properties of the shot noise Cox process \citep[\textsc{sncp},][]{Mo03Cox}, a prominent class of cluster processes. Cluster processes are routinely employed in several applied areas, including astronomy, materials science, and plant ecology; see, e.g., \citet{Illian2008,MoWaBook03}. Despite their flexibility and wide applicability, several distributional aspects of \textsc{sncp} models remain only partially understood.
As an application of our main result, we derive the higher-order Palm distributions of the \textsc{sncp}, which were previously unavailable in the literature. In the case of finite \textsc{sncp}s, we further obtain an explicit expression for the Janossy density. Since the Janossy density plays the role of a likelihood function for finite point processes, this result paves the way for new likelihood-based inference strategies in the context of \textsc{sncp} models.
We also discuss the use of our results in other statistical contexts, both frequentist and Bayesian.

% In this context, the sum of point processes models both common and idiosyncratic components across groups. See, e.g., \cite{Nip14}. Our result helps to address posterior inference, even beyond standard Poisson process models.

% \MBnote{Reinforce here: For SNCP there are no fitting algorithms, they only talk about generation. The same goes for contaminated processes. Say more about applications for SNCPs and noisy DPPs}

% \vspace{-0.2cm}
\section{Superposition of point processes}

\subsection{Background and notation for point processes} \label{sec:notation}

Let $\X$ be a Polish space equipped with the corresponding Borel $\sigma$-algebra $\Xcr$.  
A point process $\Phi$ on $\X$ can be represented as $\Phi = \sum_{j \geq 1} \delta_{X_j}$, where $(X_j)_{j\geq 1}$ is a sequence of random variables (\textit{atoms}) taking values in $\X$, and  $\delta_{x}$ denotes the Dirac delta mass at $x$. The probability distribution of $\Phi$ is denoted with $\plaw_{\Phi}$. The number of atoms in $\Phi$ could be either finite or infinite.
In the sequel, we will follow the approach of \cite{BaBlaKa}, where $\Phi$ is regarded as a random counting measure. See Section \ref{app:point_process_definition} for further mathematical details, including the $\sigma$-algebra on the space of counting measures. 

Let $M_{\Phi}(B) = \E[\Phi(B)]$, for $B \in \Xcr$, be the mean measure of $\Phi$, and define the $k$-th factorial moment measure 
$M_{\Phi}^{(k)}$ as the mean measure of the $k$-th factorial power of $\Phi$,  i.e., of the point process $\Phi^{(k)}$ defined as:
\[
    \Phi^{(k)} := \sum_{(j_1, \ldots, j_k)}^{\not =}  \delta_{(X_{j_1}, \ldots, X_{j_k})},
\]
where the symbol $\not =$ means that the sum is extended over all pairwise distinct indexes.

To introduce the notion of \emph{Palm distributions}, let us define the Campbell measure of $\Phi$, namely $\mathscr C_{\Phi}(B \times L) := \E[\Phi(B) \indicator(\Phi \in L)]$, where  $B \in \Xcr$ and  $L$ is an element of the appropriate $\sigma$-algebra on the space of random counting measures (see Section \ref{app:point_process_definition}).
Under the assumption that $M_\Phi$ is $\sigma$-finite, it can be shown that $\mathscr C_{\Phi}$ admits the following representation
\[
   \mathscr C_{\Phi}(B \times L) = \int_B \plaw_{\Phi}^x(L) M_\Phi(\dd x),
\]
where $\{\plaw_{\Phi}^x\}_{x \in \X}$ is the almost surely (a.s.) unique disintegration probability kernel of $\mathscr C_\Phi$ with respect to $M_\Phi$, and it is referred to as the Palm kernel of $\Phi$.
For fixed $x \in \X$, $\plaw_{\Phi}^x$ is a probability distribution over the space of counting measures, termed the Palm distribution of $\Phi$ at $x$, and thus it can be identified with the law of a point process $\Phi_x \sim \plaw_{\Phi}^x$, which is consequently called a \emph{Palm version} of $\Phi$ at $x$. By Proposition 3.1.12 in \cite{BaBlaKa},  for $M_\Phi$-almost all $x \in \X$, the point process $\Phi_x$ contains the atom $x$ with probability $1$. This justifies the interpretation of the Palm distribution of $\Phi$ at $x$ as the law of $\Phi$ conditionally to $\Phi$ having an atom at $x$. In addition, the point process $\Phi^!_x := \Phi_x - \delta_x$ is well-defined, and $\Phi^!_x$ is called a \emph{reduced Palm version} of $\Phi$ at~$x$. 
Finally, it is possible to extend the definition of Palm distributions to multiple conditioning points $\underline{x} = (x_1, \ldots, x_k) \in \X^k$. In this case, the Palm distribution of $\Phi$ at $\underline x$ is interpreted as the probability distribution of $\Phi$ conditionally to $\Phi$ having $k$ atoms at locations $x_1, \ldots, x_k$. See Section \ref{app:point_process_definition}.

\subsection{Palm distributions of the superposition of independent processes} \label{sec:main}

From now on, consider independent simple point processes $\Phi_i$, $i = 1, \ldots, m$. Their superposition $\Phi$ is defined as
$\Phi := \sum_{i= 1}^m  \Phi_i$, i.e., the union of all the point patterns. 
The following theorem states the main theoretical result of the paper, which characterizes the Palm distributions of the superposition of two independent point processes.  

\begin{theorem}\label{teo:main}
    Let $\Phi_1$ and $\Phi_2$ be two independent point processes on $\X$. Then, for any $x \in \X$, the Palm version $(\Phi_1 + \Phi_2)_{x}$ can be expressed as the following mixture:
    \[
        (\Phi_1 + \Phi_2)_{x} \deq \begin{cases}
            \Phi_{1 x} + \Phi_2 & \text{with probability equal to }  \frac{\dd M_{\Phi_1}}{\dd M_\Phi}(x) \\
            \Phi_{1} + \Phi_{2 x} & \text{with probability equal to } \frac{\dd M_{\Phi_2}}{\dd M_\Phi}(x).
        \end{cases}
    \]
\end{theorem}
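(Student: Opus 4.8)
The plan is to work directly with the Campbell measure of the superposition and to identify the Palm kernel through the essential uniqueness of its disintegration against the mean measure. Write $\Phi = \Phi_1 + \Phi_2$ and $M_\Phi = M_{\Phi_1} + M_{\Phi_2}$. Using linearity of the counting measure, $\Phi(B) = \Phi_1(B) + \Phi_2(B)$, I would first split the Campbell measure as
\[
   \mathscr C_\Phi(B \times L) = \E[\Phi_1(B)\,\indicator(\Phi_1 + \Phi_2 \in L)] + \E[\Phi_2(B)\,\indicator(\Phi_1 + \Phi_2 \in L)],
\]
so that, by symmetry in the indices, it suffices to analyze the first term.

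The key step is to apply the refined Campbell (Campbell--Mecke) formula to $\Phi_1$ while keeping the independent process $\Phi_2$ frozen. Rewriting $\Phi_1(B)$ as a sum over its atoms and conditioning on a realization $\phi_2$ of $\Phi_2$, the Palm disintegration of $\Phi_1$ yields
\[
   \E\Big[\sum_{x \in \Phi_1} \indicator(x \in B)\,\indicator(\Phi_1 + \phi_2 \in L) \,\Big|\, \Phi_2 = \phi_2\Big] = \int_B \plaw_{\Phi_1}^{x}\big(\{\psi : \psi + \phi_2 \in L\}\big)\, M_{\Phi_1}(\dd x).
\]
Taking the expectation over $\Phi_2$ and using that a Palm version $\Phi_{1x} \sim \plaw_{\Phi_1}^{x}$ is independent of $\Phi_2$, the first term becomes $\int_B \prob(\Phi_{1x} + \Phi_2 \in L)\, M_{\Phi_1}(\dd x)$; the second term is handled identically and equals $\int_B \prob(\Phi_1 + \Phi_{2x} \in L)\, M_{\Phi_2}(\dd x)$.

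To conclude, since $M_{\Phi_1}$ and $M_{\Phi_2}$ are both absolutely continuous with respect to $M_\Phi = M_{\Phi_1} + M_{\Phi_2}$, I would re-express both integrals against $M_\Phi$ via the Radon--Nikodym derivatives $\tfrac{\dd M_{\Phi_i}}{\dd M_\Phi}$, which are nonnegative and sum to $1$, giving
\[
   \mathscr C_\Phi(B \times L) = \int_B \Big[\tfrac{\dd M_{\Phi_1}}{\dd M_\Phi}(x)\,\prob(\Phi_{1x} + \Phi_2 \in L) + \tfrac{\dd M_{\Phi_2}}{\dd M_\Phi}(x)\,\prob(\Phi_1 + \Phi_{2x} \in L)\Big]\, M_\Phi(\dd x).
\]
By the a.s.\ uniqueness of the disintegration kernel of $\mathscr C_\Phi$ with respect to $M_\Phi$, the bracketed integrand must coincide with $\plaw_\Phi^{x}(L)$ for $M_\Phi$-almost every $x$, which is exactly the two-component mixture in the statement.

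The main obstacle is the rigorous justification of the Campbell--Mecke step, where the integrand depends simultaneously on the full configuration of $\Phi_1$ and on the auxiliary independent process $\Phi_2$: this demands a Fubini-type argument on the product space to interchange the $\Phi_2$-expectation with the $M_{\Phi_1}$-integral, together with joint measurability of $(x,\phi_2) \mapsto \plaw_{\Phi_1}^{x}(\{\psi : \psi + \phi_2 \in L\})$. The $\sigma$-finiteness of $M_{\Phi_1}$ and $M_{\Phi_2}$ (hence of $M_\Phi$) is what guarantees that all the invoked Palm disintegrations exist and are essentially unique, legitimizing the final identification of $\plaw_\Phi^{x}$.
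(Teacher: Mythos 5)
Your proof is correct, and it reaches the result by a different technical device than the paper, although the underlying skeleton is the same. The paper never touches the Campbell measure directly: it invokes the characterization of the Palm kernel through the derivative of the Laplace functional, $\partial_t \mathcal{L}_{\Phi}(f+tg)|_{t=0} = -\int_{\X} g(x)\,\mathcal{L}_{\Phi_x}(f)\,M_{\Phi}(\dd x)$ (Proposition 3.2.1 in the cited reference), splits $\E[\Phi(g)e^{-\Phi(f)}]$ over the two components, factors by independence since $\mathcal{L}_{\Phi_1+\Phi_2}=\mathcal{L}_{\Phi_1}\mathcal{L}_{\Phi_2}$, and identifies $\mathcal{L}_{\Phi_x}(f)$ as the corresponding convex combination of products of Laplace functionals. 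You instead decompose the Campbell measure itself, apply the refined Campbell (\textsc{clm}) formula to $\Phi_1$ with $\Phi_2$ frozen, integrate out $\Phi_2$ by Tonelli, and conclude by the a.s.\ uniqueness of the disintegration of $\mathscr{C}_\Phi$ against $M_\Phi$. What the paper's route buys is that independence does all the work in one line (Laplace functionals of independent sums factor), and the measurability and interchange issues you rightly flag are absorbed into the quoted proposition; what your route buys is that it avoids the Laplace-functional machinery entirely, identifies $\plaw_\Phi^x(L)$ for arbitrary measurable $L$ rather than through a transform, and makes the mixture interpretation (which atom of which component was ``picked'') completely transparent. The Fubini/measurability step you single out as the main obstacle is indeed the only place requiring care, but it is unproblematic: the integrand is nonnegative, the Palm kernel $(x,\cdot)\mapsto\plaw_{\Phi_1}^x(\cdot)$ is a probability kernel, and $(\psi,\phi_2)\mapsto\psi+\phi_2$ is measurable on the product of the configuration spaces, so $(x,\phi_2)\mapsto\plaw_{\Phi_1}^x(\{\psi:\psi+\phi_2\in L\})$ is jointly measurable and Tonelli applies. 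Both arguments require the same standing hypothesis, namely $\sigma$-finiteness of $M_{\Phi_1}$ and $M_{\Phi_2}$ (hence of $M_\Phi$), which the paper leaves implicit.
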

The proof is reported in the Appendix at the end of the paper. Theorem \ref{teo:main} admits an intuitive interpretation about the Palm distributions of the superposed process. Conditioning the superposed process $\Phi= \Phi_1+\Phi_2$ on having a point at $x$, its distribution depends on whether $x$ originates from $\Phi_1$ or from $\Phi_2$, thus reflecting the respective contributions of each source.
The mixture structure in the Palm version $(\Phi_1 + \Phi_2)_{x}$  explicitly accounts for the two mutually exclusive scenarios. 
The mixing probabilities correspond precisely to the probabilities that point $x$ originates from one or the other process.
Furthermore, exploiting the relation $\Phi_{i x} \deq \Phi_{i x}^! + \delta_x$, for $M_{\Phi_i}$-almost all $x$, $i=1, 2$, we can replace all the Palm versions appearing in the statement of \Cref{teo:main} with their reduced counterparts.

% Theorem \ref{teo:main} can be extended in several directions. In particular, in Appendix  \ref{app:extensions}, we characterize the Palm distributions of the superposition of more than two independent point processes, as well as the Palm distributions given multiple conditioning points. 
To generalize Theorem 1 to $m$ processes and $k$ points, let us introduce latent allocation variables $T_1,\ldots,T_k\in\{1,\ldots,m\}$ with the interpretation that $T_j=i$ iff $x_j$ was generated from $\Phi_i$. 
\begin{theorem}\label{teo:multiple}
Let $\Phi=\sum_{i=1}^m \Phi_i$ be the superposition of the independent simple point processes $\Phi_1,\ldots,\Phi_m$, and $\underline{x}=(x_1,\ldots,x_k)\in \X^k$, with pairwise distinct coordinates. Assume that the $k$-th factorial moment measure $M_{\Phi}^{(k)}$ is $\sigma$-finite, and that $M_{\Phi_i}^{(r)}$ admits a density $\rho_{\Phi_i}^{(r)}$ w.r.t. $\mu^{\otimes r}$, namely the $r$-fold product of a reference measure $\mu$, for all $i=1, \ldots , m$ and $r\geq 1$.
For $\underline{T} := (T_1,\ldots,T_k)$, define $\underline{x}_i = ( x_j: T_j = i )$ and denote by $n_i$  the cardinality of $\underline{x}_i$.
Then, for $M_{\Phi}^{(k)}$-almost all $\underline{x}$,
\[
    \Phi^!_{\underline{x}}\mid (\underline{T}=\underline{t})\ \deq\ \sum_{i=1}^m (\Phi_i)^!_{\underline{x}_{i} }, \qquad P(\underline{T}=\underline{t})\propto \prod_{i=1}^m \rho_{\Phi_i}^{(n_i)} \big(\underline {x}_i \big).
\]
where, conditionally on $\underline{T}=\underline{t}$, the $(\Phi_i)^!_{\underline{x}_{i} }$'s are mutually independent and, when $n_i=0$,  $(\Phi_i)^!_{\underline{x}_{i} } = \Phi_i$ and $\rho_{\Phi_i}^{(n_i)} \equiv 1$.
\end{theorem}

% Optional “density” rewrite (if you prefer a proportionality statement):
% If $M_{\Phi_k}^{(r)}$ admits a density $\rho_{\Phi_k}^{(r)}$ w.r.t. $\mu^{\otimes r}$ for all $k,r$, then
% [
% \mathbb{P}(T=t\mid x)\propto \prod_{k=1}^m \rho_{\Phi_k}^{(n_k(t))}\left(x^{(k)}(t)\right),
% ]
% with normalization over $t\in{1,\ldots,m}^n$.

\section{Inference for corrupted processes via minimum contrast} \label{sec:corruzione}

\subsection{Summary statistics for superposed point processes} \label{sec:summary_stat}

An application of Theorem \ref{teo:main} yields closed-form and interpretable expressions for commonly used summary statistics.
Here we concentrate on two summary statistics for stationary point processes, several additional examples are provided in the supplementary materials. Throughout this section, we assume that $\Phi = \Phi_1 + \Phi_2$ where the $\Phi_j$'s are stationary with intensity  $ \rho_j$, and let $\rho = \rho_1 + \rho_2$.

Perhaps the most commonly used summary statistic is Ripley's $K$-function \citep{ripley1976second}. Thanks to Theorem \ref{teo:main}, for a superposed point process $\Phi$ this takes the following form, for $r \ge 0$,
\begin{equation} \label{eq:K}
    K_{\Phi}(r) := \frac{1}{\rho} \E[\Phi^!_{o}(B(o,r))] =
   \frac{1}{\rho} \left[ K_{\Phi_1}(r) \frac{\rho_1^2}{\rho}+ K_{\Phi_2}(r) \frac{\rho_2^2}{\rho}+ 2 |B(o,r)| \frac{\rho_1\rho_2}{\rho}   \right],
  %  & = \frac{1}{\rho} \left[ \left\{K_{\Phi_1}(r) \rho_1 + \rho_2 |B(o,r)| \right\} \frac{\rho_1}{\rho} +  \left\{ \rho_1 |B(o,r)| + K_{\Phi_2}(r) \rho_2\right\} \frac{\rho_2}{\rho}  \right],
\end{equation} 
where $o$ denotes a generic point of $\X$, and $|B(o, r)|$ is the volume of the ball $B(o, r)$ with radius $r$ centered at $o$, given that  $ B(o, r) \subset \X$.
Thanks to stationarity, $K_{\Phi}$ is invariant to the choice of point $o$, which is called the \emph{typical point} of $\Phi$: the quantity $\rho K_{\Phi}(r)$ represents the expected number of points that are $r$-close to a generic point $o$, given that $\Phi$ has an atom at $o$.  Note that, as pointed out by an anonymous reviewer, expression $\eqref{eq:K}$ can also be obtained by ad hoc computations, without applying Theorem \ref{teo:main}. %Next, we consider the nearest-neighbor distance distribution function, that is typically used for model checking and envelope tests. An application of Theorem \ref{teo:main} yields
%\begin{align*}
%     G_{\Phi}(r) :&= 1 - \prob(\Phi^!_o(B(o, r)) = 0)\\
%     &= \frac{\rho_1}{\rho}\left[1 - (1 - G_{\Phi_1}(r)) (1 - F_{\Phi_2}(r))  \right] + \frac{\rho_2}{\rho}\left[1 - (1 - G_{\Phi_2}(r)) (1 - F_{\Phi_1}(r))  \right]
%\end{align*}
%where $F_{\Phi_j}(r) = 1 - \prob(\Phi(B(o, r)) = 0)$ is the empty space function for $\Phi_j$. The $J$ function also follows immediately.
A second example of summary statistic  is the reduced Palm distribution generating function $A$, defined by \cite{chiu2008reduced}, whose expression equals
\begin{equation} \label{eq:A}
    A_{\Phi}(s, r) := \E\left[s^{\Phi^!_o(B(o, r)}\right] = \frac{\rho_1}{\rho} A_{\Phi_1}(s, r) \E\left[s^{\Phi_2(B(o, r)}\right] + \frac{\rho_2}{\rho} A_{\Phi_2}(s, r) \E\left[s^{\Phi_1(B(o, r)}\right], 
\end{equation}
by virtue of Theorem \ref{teo:main}. Contrary to the $K$-function, the $A$-function captures higher-order characteristics of the point process and, as shown in \cite{chiu2008reduced}, is particularly suited for cluster point processes with regularity. Finally, in Section \ref{app:stat_summary} we also provide an explicit expression for the nearest-neighbor distance distribution function in the stationary case, while in Section \ref{app:nonstat_summary} we exploit Theorem \ref{teo:main} to evaluate summary statistics for non-stationary point processes.

\subsection{Fitting a corrupted Mat\'ern cluster process via minimum contrast estimation} \label{sec:corrupted_Matern}

Minimum contrast estimation methods \citep[\textsc{mce},][]{MoWaBook03} are a class of techniques for fitting parametric point process models to observed point patterns.
In a nutshell, given a functional summary statistic, such as $K_{\Phi}(r)$, and its nonparametric estimator $\hat K(r)$, the parameters of $\Phi$ are chosen to minimize a suitable distance between the theoretical functional summary statistic and its realized estimator, integrated over a specified interval for $r$. Typically, the distance is taken to be an $L_q$ distance between functions; see \cite{diggle2013statistical} for further details. Similarly, one can use other summary statistics, such as the $A$-function instead of the more common Ripley's $K$-function.
Here,  we describe an application of \textsc{mce} in which $\Phi_1$ is a Mat\'ern cluster process, corrupted by a background noise $\Phi_2$. An additional example is presented in Section \ref{app:corrupted_dpp}, where we fit a corrupted repulsive point process.

%Minimum contrast estimation methods \citep[\textsc{mce},][]{MoWaBook03} are a class of techniques for fitting parametric point process models to observed point patterns. 
%In several common settings where the likelihood of the point process is intractable, \textsc{mce} is the only viable option to perform inference.
%Let $\Phi$ be a point process whose distribution is parametrized by a set of parameters $\theta $. % See, e.g., \cite{MoWaBook03} for several examples.
%Let $s(r; \theta)$, $r \geq 0$, be a functional summary statistics of $\Phi$, for which an analytical expression is available, and let $\hat s(r)$ be a nonparametric estimate of $s$ based on the observed data. The \textsc{mce} for $\theta$ is obtained as
%\begin{equation}\label{eq:mce}
%     \hat \theta = \text{argmin}_{\theta} \int_{r_l}^{r_u} |\hat s(r)^q - s(r; \theta)^q|^p \dd r.
% \end{equation}
% Common choices consider $r_l = 0$, $q=1/2$, $p=2$ \citep{diggle2013statistical}.  Among the various options for $s(r; \theta)$, we focus on the two summary statistics presented in Section \ref{sec:summary_stat}. Plugging the expressions of \eqref{eq:K} or \eqref{eq:A} into \eqref{eq:mce}, one needs to solve an optimization problem to estimate the parameters of $\Phi_1$ and $\Phi_2$. Below, we describe an application
%where $\Phi_1$ is a Mat\'ern cluster process, corrupted by a background noise $\Phi_2$. An additional example is presented in Appendix \ref{app:corrupted_dpp}, where we fit a corrupted repulsive point process.

For the sake of illustration, let $\X= \R^2$. Define the kernel function $\kappa ( \xi ; c ) = \indicator_{B (c, R)} (\xi)/ (\pi R^2)$,
where $R >0$. A Mat\'ern cluster process $\Phi_1$ is a Cox process defined in a  hierarchical fashion:
$\Phi_1 \mid \Phi_p$ is a Poisson process with intensity measure $\int_{\R^2}  \mu  \kappa (\xi ; c) \Phi_p (\dd c) \dd \xi$, and $\Phi_p \sim \textsc{pp} (\lambda \dd c)$, 
%\[
%   \Phi_1 \mid \Phi_p \sim \textsc{pp} \left( \int_{\R^2}  \mu  k (\xi \mid c) \Phi_p (\dd c) \dd \xi \right), \quad 
%\Phi_p  \sim \textsc{pp} (\lambda \dd c),
%\]
where $\textsc{pp} (\lambda \dd c)$ denotes a  Poisson process with intensity $\lambda$ on $\X$, and $\lambda, \mu >0$.
Equivalently, $\Phi_1$ is a cluster process: the parent points are generated by the latent  parent process $\Phi_p$, while each cluster of $\Phi_1$ consists of points uniformly distributed in $B (c_j, R)$, where $\Phi_p= \sum_{j \geq 1} \delta_{c_j}$.

%\MBtext{Equivalently: letting $\Phi_p = \sum_{j \ge 1} \delta_{c_j}$, we have  $\Phi_1 \mid \Phi_p = \sum_{j \ge 1} \Phi_{1,j}$ where $\Phi_{1, j} \sim \textsc{pp}(\mu k (\xi \mid c_j) \dd \xi)$. This construction highlights the clustering property of the Mat\'ern process.}

We assume to observe a realization of the Mat\'ern cluster process $\Phi_1$ corrupted by a background noise, independent of $\Phi_1$. We further suppose that the corrupting noise  $\Phi_2$ is a homogeneous Poisson point process with intensity $\rho_2$.
We propose to fit the superposition $\Phi_1 + \Phi_2$ to the observed point pattern via \textsc{mce}. 
%minimum contrast estimation methods \citep[\textsc{mce},][]{MoWaBook03}. 
%In a nutshell, given a functional summary statistic, parametrized by the parameters of $\Phi$, and its nonparametric estimator, we choose the parameters that minimize a suitable distance between the theoretical functional statistics and its realized estimator. Typically, the distance is chosen to be an $L_q$ distance between functions; see \cite{diggle2013statistical} for further details.
We compare three different approaches: two based on the $A$- and $K$-functions of the superposed process, and one where we ignore the superposition and fit \textsc{mce} based on the $A$-function of $\Phi_1$ (this is equal to setting $\rho_2 \equiv 0$). Refer to Section \ref{app:details_matern} for the explicit expressions of $A$ and $K$ for our Mat\'ern--Poisson superposition.
We generate data from a Mat\'ern point process with parameters $\rho_1 = 50$, $\mu=5$, $R = 0.05$ and corrupt it with a Poisson process with $\rho_2 = 20$. We consider two observation windows, namely $[0, 1]^2$ and $ [0, 5]^2$.

\begin{table}[t]
	\centering
\begin{tabular}{llcccc}
Window & Method  & $\rho_1$ & $\mu$ & $R$ & $\rho_2$ \\
\midrule
\multirow{3}{*}{$[0,1]^2$}
& $A$-\textsc{mce} (correct)
& 53.7 (24.5) & 5.42 (2.05) & 0.05 (0.01) & 19.3 (30.8) \\
& $K$-\textsc{mce} (correct)
& 36.1 (16.4) & 7.09 (3.65) & 0.05 (0.02) & 57.5 (16.8) \\
& $A$-\textsc{mce} (misspecified)
& 67.5 (29.9) & 4.60 (1.37) & 0.05 (0.01) & --- \\
\midrule
\multirow{3}{*}{$[0,5]^2$}
& $A$-\textsc{mce} (correct)
& 50.1 (4.9) & 5.03 (0.39) & 0.05 (0.00) & 20.2 (6.7) \\
& $K$-\textsc{mce} (correct)
& 42.1 (6.3) & 5.54 (0.70) & 0.05 (0.00) & 36.8 (4.9) \\
& $A$-\textsc{mce} (misspecified)
& 64.2 (3.7) & 4.28 (0.24) & 0.05 (0.00) & --- \\
\end{tabular}
\caption{Median and interquantile range (\textsc{iqr}, in brackets) of the estimates for the superposed Mat\'ern--Poisson model over $200$ independent replicated
datasets.}
\label{tab:mce_A_K_two_windows}
\end{table}

The results reported in Table \ref{tab:mce_A_K_two_windows} highlight clear differences among the three estimators. The $A$-\textsc{mce} under the correct superposed model is well centered around the true parameter values, and its dispersion decreases significantly as the observation window grows, as expected from increasing information. In contrast, the $A$-\textsc{mce} that ignores the Poisson background exhibits substantial bias in estimating $\rho_1$, while still delivering accurate estimates of $\mu$ and $R$. Finally, the $K$-\textsc{mce} tends to overestimate $\rho_2$ and consequently underestimate the $\rho_1$, underscoring that capturing features beyond second-order characteristics can be crucial for reliable inference in cluster processes, especially under superposition.

\section{Shot noise Cox processes: Palm distributions and Janossy densities} \label{sec:sncp_inference}

\textsc{sncp}s are a class of general models for clustered point patterns.
To define a \textsc{sncp},  let $\nu(\dd \theta \, \dd \gamma)$ be a locally finite diffuse intensity measure on $\X \times\R_+$. Let $\{\kappa(\cdot; \theta)\}_{\theta \in \X}$ be a family of parametric probability density functions on $\X$, called the kernel of the \textsc{sncp}, where the parameter space is $\X$ itself.
Assuming that $\int_{\X\times \R_+} \gamma \kappa(x; \theta) \nu(\dd \theta \, \dd \gamma) < \infty$ for any $x \in \X$, then $\Phi$ is a \textsc{sncp} directed by $\nu$ with kernel $\kappa$ if 
\begin{equation}\label{eq:sncp_definition}
    \Phi \mid \Lambda \sim \textsc{pp}\left(\int_{\X\times \R_+} \gamma \kappa(x; \theta) \Lambda(\dd \theta \, \dd \gamma) \dd x\right), \qquad \Lambda \sim \textsc{pp}(\nu).
\end{equation}
%where $\textsc{pp}(\omega)$ denotes the law of a Poisson point process with intensity measure $\omega$. 
We write $\Phi \sim \textsc{sncp}(\kappa, \nu)$.
A \textsc{sncp} is a \emph{cluster process}: the coloring theorem of Poisson processes entails that $\Phi$ equals in distribution the sum $\sum_{i \geq 1} \Phi_i$, where $\Phi_i \mid \Lambda \ind \textsc{pp}(\gamma_i \kappa(x; \theta_i) \dd x)$, and $\Lambda = \sum_{i \geq 1} \delta_{(\theta_i, \gamma_i)}$. 
Then, for each point $X_j$ of $\Phi$, it is possible to introduce a latent cluster allocation variable $T_j$ such that $\Phi_i(\{X_j\}) = 1$ iff $T_j = i$ and $\Phi_i(\{X_j\}) = 0$ otherwise, i.e., $T_j = i$ if $X_j$ arose from $\Phi_i$. 
The number of distinct values across $\underline{T} := (T_j)_{j\geq 1}$, denoted with $|\underline{T}|$, represents the number of clusters among the points. 
\cite{wang2023spatiotemporal} exploit this construction to draw a connection with Bayesian mixtures of finite mixtures \citep{Lijoi08,miller2018mixture} when $\Lambda$ is a finite Poisson process, and the $\gamma_i$'s are independent and gamma-distributed.

The Palm distributions of a \textsc{sncp} at a single point $x$ was obtain in \cite{Mo03Cox}, but higher-order Palm and reduced Palm distributions for \textsc{sncp}s are not known in the literature. We fill this gap with the theorem below, which follows from a recursive application of \Cref{teo:main}. 
Define $\eta(x_1, \ldots, x_p) = \int_{\X\times \R_+} \gamma^{p} \prod_{j=1}^p \kappa(x_j; \theta) \nu(\dd \theta\, \dd \gamma)$ and assume that $\eta(x_1, \ldots, x_p) < \infty$, for any $x_1, \ldots, x_p$. The latter assumption imposes constraints on the choice of $\kappa$ and $   \nu$.  %\LGnote{NOTAZIONE (IMPORTANTE): siccome usiamo i latent indicators anche nel teorema 2, secondo me sarebbe meglio usare due lettere diverse per le $T$, ad esempio in Teorema 2 e qui sopra (definizione dello sncp) usare $T^*$, perchè Teorema 2 e Teorema 3 sembrano simili ma le latenti $T$ e $T^*$ sono definite su spazi diversi, cioè: $\underline{T} \in \calT_k$ vs $\underline{T}^* \in \{1,\ldots,m\}^k$.}

\begin{theorem}\label{thm:red_palm_sncp} 
Let $\Phi \sim \textsc{sncp}(\kappa, \nu)$ and $\underline{x} = (x_1, \ldots, x_k) \in \X^k$. Then, the reduced Palm version of $\Phi$ at $\underline{x}$ admits the following representation
\begin{equation}\label{eq:thesis_palm_sncp}
    \Phi^!_{\underline{x}}\mid \underline{T} \deq \Phi + \sum_{h = 1}^{|\underline{T}|} \Phi_{\zeta_{{\underline{x}}_h}},\qquad \prob(\underline{T} = \underline{t}) \propto \prod_{h=1}^{|\underline{t}|} \eta(\underline{x}_h),
\end{equation}
where $\underline{T} := (T_1,\ldots,T_k)$ are latent indicators describing a partition of $\underline{x}$ into $|\underline{T}|$ clusters and
\begin{equation*}
    \begin{aligned}
        \Phi_{\zeta_{{\underline{x}}_h}} \mid \zeta_{{\underline{x}}_h} = (\theta_{{\underline{x}}_h}, \gamma_{{\underline{x}}_h}) &\sim \textsc{pp}(\gamma_{{\underline{x}}_h} \kappa(x; \theta_{{\underline{x}}_h}) \,\dd x)\\
        \zeta_{{\underline{x}}_h}  &\sim f_{{\underline{x}}_h}(\dd \theta\, \dd \gamma) \propto \gamma^{n_h} \prod_{x_j \in 
        \underline{x}_h}\kappa(x_j;\theta) \nu(\dd \theta\, \dd \gamma),
    \end{aligned}
\end{equation*}
where $\underline{x}_h = (x_j: T_j = h)$ and $n_h$ is the cardinality of $\underline{x}_h$.
Finally, the processes $\Phi$ and $\Phi_{\zeta_{{\underline{x}}_h}}$, $h = 1,\ldots, |\underline{T}|$, are mutually independent conditionally to $\underline{T}$.
\end{theorem}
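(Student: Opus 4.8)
The plan is to prove the representation by induction on the number of conditioning points $k$, combining the iterated (chain-rule) structure of reduced Palm distributions with the superposition result of Theorem \ref{teo:main} in its multi-process, multi-point form (Appendix \ref{app:extensions}). The structural fact I would exploit throughout is that a \textsc{sncp} is a Poisson cluster process: $\Phi\deq\sum_{i\geq 1}\Phi_i$ with $\Phi_i\mid(\theta_i,\gamma_i)\simind\textsc{pp}(\gamma_i\kappa(\cdot;\theta_i)\,\dd x)$ and $\Lambda=\sum_i\delta_{(\theta_i,\gamma_i)}\sim\textsc{pp}(\nu)$, so that each conditioning point is necessarily produced by exactly one cluster, and the partition $\bm T$ records which points share a cluster.

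The base case $k=1$ is the single-point reduced Palm distribution of \cite{Mo03Cox}, namely $\Phi^!_x\deq\Phi+\Phi_{\zeta_x}$ with $\zeta_x\sim f_x\propto\gamma\kappa(x;\theta)\nu$ and the two summands independent; this is \eqref{eq:thesis_palm_sncp} for the single (necessarily singleton) block, since $\eta(x)=\int\gamma\kappa(x;\theta)\nu$. For the inductive step I would invoke the chain rule $\Phi^!_{(x_1,\dots,x_k)}\deq\bigl(\Phi^!_{(x_1,\dots,x_{k-1})}\bigr)^!_{x_k}$. By the inductive hypothesis, conditionally on $\bm T_{<k}=\bm t_{<k}$ the process $\Phi^!_{(x_1,\dots,x_{k-1})}$ is the independent superposition of the background \textsc{sncp} $\Phi$ and the blockwise clusters $\Phi_{\zeta_{\bm x_h}}$, so I would apply the multi-process superposition Palm theorem to take its reduced Palm version at $x_k$: the new point attaches to exactly one summand, with probability proportional to that summand's mean density at $x_k$, and only that summand is reduced-Palm conditioned while the others are left untouched and independent.

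Two cases arise, both resolved by Slivnyak's theorem for the Poisson clusters. If $x_k$ attaches to the background $\Phi$ (mean density $\eta(x_k)$), the base case adds a fresh independent cluster $\Phi_{\zeta_{\{x_k\}}}$ with $\zeta_{\{x_k\}}\sim f_{\{x_k\}}\propto\gamma\kappa(x_k;\theta)\nu$, creating a new singleton block. If instead $x_k$ attaches to an existing cluster $\Phi_{\zeta_{\bm x_h}}$, then --- since this cluster is Poisson given $\zeta_{\bm x_h}$ --- its reduced Palm version leaves the Poisson cluster in distribution unchanged but tilts the parameter law by the factor $\gamma\kappa(x_k;\theta)$, turning $f_{\bm x_h}$ into $f_{\bm x_h\cup\{x_k\}}$ and enlarging block $h$. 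The delicate bookkeeping is on the mixing weights: taking the reduced Palm version of a \emph{mixture} of point processes at $x_k$ size-biases the mixing law $\prob(\bm T_{<k}=\bm t_{<k})\propto\prod_h\eta(\bm x_h)$ by the total mean density at $x_k$, $Z(\bm t_{<k})=\eta(x_k)+\sum_h\eta(\bm x_h\cup\{x_k\})/\eta(\bm x_h)$, whereas the attachment of $x_k$ to a given component occurs with probability equal to that component's share of $Z(\bm t_{<k})$. The factor $Z(\bm t_{<k})$ therefore cancels, so the joint weight of the resulting partition $\bm t$ is proportional to $\prod_h\eta(\bm x_h)\cdot\eta(x_k)$ for a new singleton block, or to $\prod_{h'\neq h}\eta(\bm x_{h'})\cdot\eta(\bm x_h\cup\{x_k\})$ for an enlarged block $h$; both equal $\prod_h\eta(\bm x_h^{\mathrm{new}})$, the telescoping that reproduces $\prob(\bm T=\bm t)\propto\prod_h\eta(\bm x_h)$. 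Conditional independence is preserved because each superposition-Palm step modifies a single summand.

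The main obstacle I anticipate is making the infinite-superposition argument rigorous: Theorem \ref{teo:main} is stated for two processes, so I must first establish its multi-process, multi-point extension for the countable family of Poisson clusters, checking $\sigma$-finiteness of the relevant mean measures and the measurability and convergence of the disintegrations. A related subtlety is verifying that the ``background'' copy of $\Phi$ reappearing inside the induction is genuinely governed by the base-case formula --- that reduced-Palm conditioning a \textsc{sncp} returns the same \textsc{sncp} plus one independent tilted cluster --- which is exactly where the Poisson structure and the diffuseness of $\nu$ guarantee that the added cluster is almost surely new and the parameter tilting is well-defined.
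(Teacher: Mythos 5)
Your proposal is correct and follows essentially the same route as the paper's proof: induction on $k$ via the Palm algebra chain rule, size-biasing the mixture weights over partitions (the paper's Proposition \ref{prp:palm_mixture}), applying the multi-process superposition result to attach $x_{k}$ to either the background \textsc{sncp} or an existing cluster, using the Cox/Poisson structure to show that Palm-conditioning a cluster merely tilts its parameter law from $f_{\bm x_h}$ to $f_{(\bm x_h, x_k)}$ (the paper's Lemma \ref{lemma:laplace_cox_terms}), and observing the telescoping cancellation of the normalizing factor to recover $\prob(\bm T=\bm t)\propto\prod_h\eta(\bm x_h)$. The infinite-superposition concern you raise does not actually arise in this route, since each mixture component in the induction is a \emph{finite} superposition of $\Phi$ and the $|\bm t|$ clusters, with the background's single-point Palm handled entirely by the $k=1$ base case.
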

See Section \ref{app:auxiliary_results} for the precise definition of the space where $\underline{T}$ takes value. \\

Theorem \ref{thm:red_palm_sncp} plays a key role in deriving the likelihood of a \textsc{sncp}, thereby paving the way for new strategies in maximum likelihood estimation. Assume that $\Phi$ has almost surely a finite number of points.
Following \cite{DaVeJo1}, we define the likelihood of $\Phi$ as its Janossy density seen as a function of the parameters of $\Phi$.
Briefly, we recall that for a regular finite point process $\Phi$ with associated Janossy density $j_k: \X^k \to \R_+$, $j_k(x_1, \ldots, x_k) \dd x_1 \,\ldots\, \dd x_k$ represents the probability that $\Phi$ consists of exactly $k$ points located in infinitesimal neighborhoods of $x_j$, $j=1, \ldots, k$. Under suitably regularity conditions, the family of Janossy densities $j_k(\cdot)$, $k\geq 0$, characterizes its probability distribution \citep[Proposition 5.3.II,][]{DaVeJo1}.
The next theorem, derived from \Cref{thm:red_palm_sncp}, gives an explicit expression for the Janossy densities when $\Phi$ is a general finite \textsc{sncp}. We remark that a sufficient condition for the finiteness of $\Phi$ is $\int_{\X\times \R_+} \gamma \nu(\dd \theta\,\dd \gamma) < \infty$. For simplicity, we report here only the case when $\nu(\dd \theta \, \dd \gamma) = \rho(\dd \gamma) G_0(\dd \theta)$; see the supplementary materials (Section \ref{app:general_janossy_sncp}) for the general statement and the proof.
\begin{theorem}\label{thm:janossy_sncp}
    Let $\Phi \sim \textsc{sncp}(\kappa, \nu)$ such that $\Phi(\X) < \infty$ almost surely and $\nu(\dd \theta \, \dd \gamma) = \rho(\dd \gamma) G_0(\dd \theta)$. Then 
\begin{equation}\label{eq:janossy_fact}
    j_k(x_1,\ldots,x_k) = k! \, \prob( \Phi(\X) = k) \,\E\left[\prod_{h=1}^{|\underline{T}|} \int_{\X}\prod_{j: T_j = h} \kappa(x_j ; \theta) G_0(\dd \theta)\right],
\end{equation}
where the expectation is taken with respect to the indicators $\underline{T} = (T_1, \ldots, T_k)$ with distribution
\begin{equation}\label{eq:dist_T}
\prob(\underline{T} = \underline{t}) \propto  \prod_{h=1}^{|\underline{t}|} \int_{\R_+} e^{-\gamma} \gamma^{n_h} \rho(\dd \gamma) .    
\end{equation}
\end{theorem}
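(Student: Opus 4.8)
The plan is to combine the reduced Palm representation of Theorem \ref{thm:red_palm_sncp} with the classical identity linking Janossy densities and void probabilities of reduced Palm versions. For a regular finite point process with factorial moment density $m^{(k)}$ (the density of $M_\Phi^{(k)}$) one has, for $M_\Phi^{(k)}$-almost every $\bm x$,
\[
    j_k(x_1,\ldots,x_k) = m^{(k)}(x_1,\ldots,x_k)\,\prob\bigl(\Phi^!_{\bm x}(\X) = 0\bigr);
\]
see \citep{DaVeJo1}. Intuitively, observing exactly $k$ points at $x_1,\ldots,x_k$ means having $k$ points there, accounted for by $m^{(k)}$, and no other points, accounted for by the void probability of the reduced Palm version. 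I would first record this identity (checking it on the Poisson case through Slivnyak's theorem), and then evaluate the two factors using the \textsc{sncp} structure.

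For the factorial moment density, since a \textsc{sncp} is a Cox process directed by $Z(x) = \int_{\X\times\R_+}\gamma\kappa(x;\theta)\Lambda(\dd\theta\,\dd\gamma)$, one has $m^{(k)}(\bm x) = \E[\prod_{j=1}^k Z(x_j)]$. Expanding the product and applying the Poisson moment expansion of $\Lambda$ over set partitions gives $m^{(k)}(\bm x) = \sum_{\bm t}\prod_{h=1}^{|\bm t|}\eta(\bm x_h)$, which is exactly the normalizing constant of the law of $\bm T$ in Theorem \ref{thm:red_palm_sncp}. For the void probability I would condition on $\bm T$ and use the conditional mutual independence asserted there, so that $\prob(\Phi^!_{\bm x}(\X)=0 \mid \bm T) = \prob(\Phi(\X)=0)\prod_{h=1}^{|\bm T|}\prob(\Phi_{\zeta_{\bm x_h}}(\X)=0)$. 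Because $\kappa(\cdot;\theta)$ integrates to one, $\Phi_{\zeta_{\bm x_h}}\mid(\theta,\gamma)$ is Poisson with total mass $\gamma$, so its void probability is $e^{-\gamma}$; integrating against the tilted law $f_{\bm x_h}$ makes the normalizer $\eta(\bm x_h)$ appear in the denominator.

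Multiplying $m^{(k)}(\bm x)$ by the (averaged) void probability then cancels every $\eta(\bm x_h)$, and splitting each integral through $\nu=\rho\otimes G_0$ yields
\[
    j_k(\bm x) = \prob(\Phi(\X)=0)\sum_{\bm t}\prod_{h=1}^{|\bm t|}\biggl(\int_{\R_+}e^{-\gamma}\gamma^{n_h}\rho(\dd\gamma)\biggr)\biggl(\int_{\X}\prod_{x_j\in\bm x_h}\kappa(x_j;\theta)\,G_0(\dd\theta)\biggr).
\]
Recognizing the $\gamma$-factors as the unnormalized weights of the law \eqref{eq:dist_T} of $\bm T$, the sum equals $W\,\E[\prod_{h}\int_{\X}\prod_{j:T_j=h}\kappa(x_j;\theta)\,G_0(\dd\theta)]$, where $W=\sum_{\bm t}\prod_h\int_{\R_+}e^{-\gamma}\gamma^{n_h}\rho(\dd\gamma)$ is the normalizing constant of \eqref{eq:dist_T}.

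It remains to prove the count identity $\prob(\Phi(\X)=0)\,W = k!\,\prob(\Phi(\X)=k)$, which I expect to be the main obstacle. Writing $S = \int_{\X\times\R_+}\gamma\,\Lambda(\dd\theta\,\dd\gamma)$ for the total driving mass, we have $\Phi(\X)\mid\Lambda\sim\mathrm{Poisson}(S)$, hence $k!\,\prob(\Phi(\X)=k)=\E[S^k e^{-S}]$ and $\prob(\Phi(\X)=0)=\E[e^{-S}]$. Using that $G_0$ is a probability measure, the Laplace functional of $\Lambda$ gives $\E[e^{-(1-s)S}]=\exp(-\int_{\R_+}(1-e^{-(1-s)\gamma})\rho(\dd\gamma))$, so that $\E[e^{-S}e^{sS}]/\E[e^{-S}] = \exp(\int_{\R_+}e^{-\gamma}(e^{s\gamma}-1)\rho(\dd\gamma))$. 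The exponential (Bell polynomial) formula applied to the sequence $a_n := \int_{\R_+}e^{-\gamma}\gamma^n\rho(\dd\gamma)$ identifies the right-hand side with $\sum_k (s^k/k!)\sum_{\bm t}\prod_h a_{n_h}$; matching the coefficient of $s^k$ gives $\E[S^k e^{-S}] = \E[e^{-S}]\sum_{\bm t}\prod_h a_{n_h} = \prob(\Phi(\X)=0)\,W$, which is the desired identity. Substituting back yields \eqref{eq:janossy_fact}. The delicate points are the bookkeeping of normalizing constants through the cancellation of $\eta(\bm x_h)$ and the matching of the partition sum with the exponential formula; the remaining steps are direct consequences of Theorem \ref{thm:red_palm_sncp} and standard point-process identities.
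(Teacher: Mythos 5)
Your proof follows the same skeleton as the paper's: both start from the identity $j_k(\bm x)\,\dd\bm x = \prob(\Phi^!_{\bm x}(\X)=0)\,M^{(k)}_\Phi(\dd\bm x)$ (Theorem \ref{thm:janossy_general} in the supplement, which you instead import from \citep{DaVeJo1}), both compute the factorial moment density via the Cox/Poisson partition expansion (Lemma \ref{lemma:k_factorial_sncp}), both evaluate the void probability by conditioning on $\bm T$ and using the conditional independence in Theorem \ref{thm:red_palm_sncp} together with the fact that $\Phi_{\zeta_{\bm x_h}}\mid(\theta,\gamma)$ has total mass $\gamma$, and both rely on the same cancellation of the normalizers $\eta(\bm x_h)$. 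Where you genuinely diverge is the last step: the paper identifies $k!\,\prob(\Phi(\X)=k)$ and the law \eqref{eq:dist_T} of $\bm T$ by appealing to Theorem \ref{prop:prior_partition_sncp}, the joint distribution of the number of points and the latent partition (proved via the \textsc{clm} formula applied to $\Lambda$), and then summing over partitions; you instead prove the count identity $\prob(\Phi(\X)=0)\,W = k!\,\prob(\Phi(\X)=k)$ directly from the Laplace transform of the total driving mass $S$ and the exponential (Bell polynomial) formula. Your route is correct and more self-contained for this single identity --- modulo the routine justification of the coefficient matching for $s$ in a neighbourhood of $0$ --- but it only yields the normalizing constant $W$, whereas the paper's Theorem \ref{prop:prior_partition_sncp} delivers the full joint law of $(N,\bm T)$, which the paper reuses elsewhere (e.g.\ to recognize the Chinese restaurant process in the Thomas--gamma example). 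Either way the bookkeeping you flagged as delicate does go through exactly as you describe.
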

The expression of the Janossy density in Theorem \ref{thm:janossy_sncp} naturally enables maximum likelihood estimation for $\Phi \sim \textsc{sncp}(\kappa, \nu)$, obtained by selecting the parameters that maximize $j_k$ in \eqref{eq:janossy_fact}--\eqref{eq:dist_T}.
In particular, the structure revealed by \Cref{thm:janossy_sncp} suggests the potential use of expectation-maximization algorithms for parameter estimation, thereby providing a viable alternative to existing inferential methods \citep{MoWaBook03}. This line of research is currently under active investigation, and it will be the object of future research studies.

\section{Discussion}

The implications of Theorem \ref{teo:main} and its generalizations extend well beyond the two concrete applications presented in this work. As discussed in Section~\ref{sec:summary_stat}, our main result enables the evaluation of summary statistics for both stationary and non-stationary point processes, which is useful for both exploratory analysis and model checking via, e.g., envelope tests  \citep{Chiu2013}.

We also envision applications in the field of Bayesian nonparametrics, where superpositions of point processes are used to define prior distributions when data are divided into groups \citep{Nip14, griffin2013comparing}: Theorem \ref{teo:main} can be leveraged for posterior analysis and numerical computations in such models.
Finally, Theorem~\ref{thm:red_palm_sncp} plays a key role in deriving posterior representations in settings where the \textsc{sncp} serves as a Bayesian nonparametric prior. Relevant examples include Bayesian mixture modeling \citep{Beraha2025JRSSB} and extended feature allocation models, namely an extension of Bayesian clustering models that allows individuals to share multiple features \citep{beraha2025extended}.

\section*{Acknowledgments}
{
MB gratefully acknowledges support from the Italian Ministry of Education, University and Research (MUR), “Dipartimenti di Eccellenza” grant 2023-2027.
FC gratefully acknowledges support of MUR - Prin 2022 - Grant no. 2022CLTYP4, funded by the European Union – Next Generation EU. LG gratefully acknowledges support from the Office of Naval Research (N00014-24-1-2626-P00002).}

\section*{Appendix}
\section*{Proof of Theorem \ref{teo:main}}

Let $\Phi = \Phi_1 + \Phi_2$, with $\Phi_1$ independent of $\Phi_2$. By \citep[Proposition 3.2.1]{BaBlaKa}, the Palm distribution of a point process $\Phi$ is uniquely characterized by the following relation: for all measurable $f, g: \X \rightarrow \R_+$ such that $g$ is $M_{\Phi}$-integrable
    \begin{equation}\label{eq:der_thm1}
        \partder{t} \mathcal{L}_{\Phi}(f + t g)|_{t = 0} =  - \E\left[ \Phi(g) e^{-\Phi(f)} \right] =  - \int_{\X} g(x) \mathcal{L}_{\Phi_x}(f) M_{\Phi}(\dd x),
    \end{equation}
    where $\mathcal{L}_{\Phi}(f)$ denotes the Laplace functional of $\Phi$ evaluated at $f$, i.e., $\mathcal{L}_{\Phi}(f) = \E\left[e^{- \int_{\X} f(x) \Phi(\dd x)}\right]$, and $\Phi(f) := \int_{\X} f(x) \Phi(\dd x)$.
    Therefore, we have
    \begin{align*}
        \E\left[ \Phi(g) e^{-\Phi(f)} \right] &= \E\left[ \int_{\X} g(x) \e^{-\int_{\X} f(y) (\Phi_1 + \Phi_2)(\dd y)} \Phi_1(\dd x) \right] \\
        & \qquad + \E\left[ \int_{\X} g(x) \e^{-\int_{\X} f(y) (\Phi_1 + \Phi_2)(\dd y)} \Phi_2(\dd x) \right] \\
        & = \E\left[\Phi_1(g) \e^{-\Phi_1(f)}\right]\E[\e^{-\Phi_2(f)}] +\E\left[\Phi_2(g) \e^{-\Phi_2(f)}\right]\E[\e^{-\Phi_1(f)}]
    \end{align*}
    where the last equality follows from the independence of $\Phi_1$ and $\Phi_2$.
    Then, applying again \citep[Proposition 3.2.1]{BaBlaKa}, we obtain
    \begin{equation*}
       \E\left[ \Phi(g) e^{-\Phi(f)} \right] = 
        \left[\int_{\X} g(x) \mathcal{L}_{\Phi_{1x}}(f) M_{\Phi_1}(\dd x)\right] \mathcal{L}_{\Phi_2}(f) + \left[\int_{\X} g(x) \mathcal{L}_{\Phi_{2x}}(f) M_{\Phi_2}(\dd x)\right] \mathcal{L}_{\Phi_1}(f).
    \end{equation*}
Since $M_\Phi(\dd x) = M_{\Phi_1}(\dd x ) + M_{\Phi_2}(\dd x)$, and $M_{\Phi_i}$ is absolutely contintuous with respect to $M_\Phi$, we write
 \begin{equation*}
       \E\left[ \Phi(g) e^{-\Phi(f)} \right] = 
        \int_{\X} g(x) \left\{ \mathcal{L}_{\Phi_2}(f) \mathcal{L}_{\Phi_{1x}}(f) \frac{\dd M_{\Phi_1}}{\dd M_{\Phi}}(x) + \mathcal{L}_{\Phi_1}(f) \mathcal{L}_{\Phi_{2x}}(f) \frac{\dd M_{\Phi_2}}{\dd M_{\Phi}}(x)\right\} M_\Phi(\dd x).
    \end{equation*}  
    Therefore, from \eqref{eq:der_thm1}, it holds that
    \[
    \mathcal{L}_{\Phi_x}(f) = \mathcal{L}_{\Phi_2}(f) \mathcal{L}_{\Phi_{1x}}(f) \frac{\dd M_{\Phi_1}}{\dd M_{\Phi}}(x) + \mathcal{L}_{\Phi_1}(f) \mathcal{L}_{\Phi_{2x}}(f) \frac{\dd M_{\Phi_2}}{\dd M_{\Phi}}(x),
    \]
    which is an alternative writing for the statement of the theorem.

\bibliography{references}

\clearpage

\setcounter{equation}{0}
\renewcommand\theequation{S\arabic{equation}}
\renewcommand\theHequation{S\arabic{equation}}

\setcounter{theorem}{0}
\renewcommand\thetheorem{S\arabic{theorem}}
\renewcommand\theHtheorem{S\arabic{theorem}}

\setcounter{corollary}{0}
\renewcommand\thecorollary{S\arabic{corollary}}
\renewcommand\theHcorollary{S\arabic{corollary}}

\setcounter{proposition}{0}
\renewcommand\theproposition{S\arabic{proposition}}
\renewcommand\theHproposition{S\arabic{proposition}}

\setcounter{lemma}{0}
\renewcommand\thelemma{S\arabic{lemma}}
\renewcommand\theHlemma{S\arabic{lemma}}

\setcounter{figure}{0}
\renewcommand\thefigure{S\arabic{figure}}
\renewcommand\theHfigure{S\arabic{figure}}

\setcounter{table}{0}
\renewcommand\thetable{S\arabic{table}}
\renewcommand\theHtable{S\arabic{table}}

\setcounter{section}{0}
\renewcommand{\thesection}{S\arabic{section}}
\renewcommand{\theHsection}{S\arabic{section}}

\begin{center}
   \LARGE Supplementary Material for:\\
    ``Palm distributions of superposed point processes for statistical inference''
\end{center}

This Supplementary Material contains mathematical proofs and additional methodological details supporting the accompanying paper.

%\appendix
\section{Mathematical background on point processes} \label{app:point_process_definition}

Here, we provide the formal definition of a point process. A rigorous treatment can be found in \cite{DaVeJo2} or \cite{BaBlaKa}. See also \cite{Kallenberg84} for some further intuition. Following the notation in \cite{BaBlaKa}, let $\X$ be a Polish space equipped with the corresponding Borel $\sigma$-algebra $\Xcr$.  
Let $\M_{\X}$ be the set of counting measures on $(\X, \Xcr)$, that is  $\nu \in \M_{\X}$ if (i) $\nu$ is a locally finite measure, i.e., $\nu(B) < \infty$ for all relative compact sets $B \in \Xcr$, and (ii) $\nu(B) \in \{0,1,\ldots\}$ for all relatively compact sets $B \in \Xcr$. Let $\mathcal M_{\X}$ be the smallest $\sigma$-algebra which makes the mappings $\nu \mapsto \nu(B)$ measurable, for any $B \in \Xcr$. By definition, a point process $\Phi$ on the space $\X$ is a measurable map from an underlying probability space $(\Omega, \Acr, \prob)$ taking values in $(\M_{\X}, \mathcal M_{\X})$. Its probability distribution is given by $\plaw_\Phi = \prob \circ \Phi^{-1}$. Notably, any point process $\Phi$ can be represented as $\Phi = \sum_{j\geq 1} \delta_{X_j}$, where $(X_j)_{j\geq 1}$ is a sequence of random variables taking values in $\X$, and  $\delta_{x}$ denotes the Dirac delta mass at $x$.
To describe the probabilty distribution of $\Phi$, the most essential summary is its mean measure $M_{\Phi}$, which is defined as $M_{\Phi}(B) = \E[\Phi(B)]$ for any $B \in \Xcr$. In general, the $k$-th order factorial moment measure $M_{\Phi}^{(k)}$ of $\Phi$ is the mean measure of the $k$-th factorial power of $\Phi$, i.e., of the point process $\Phi^{(k)}$  defined as:
\[
    \Phi^{(k)} := \sum_{(j_1, \ldots, j_k)}^{\not =}  \delta_{(X_{j_1}, \ldots, X_{j_k})},
\]
where the symbol $\not =$ means that the sum is over all pairwise distinct indexes. Informally, $M_{\Phi}^{(k)}(\dd x_1\,\ldots\, \dd x_k)$ corresponds to the probability that $\Phi$ has atoms in infinitesimal neighborhoods of $x_j$, $j=1, \ldots, k$. See \cite{BaBlaKa} for a detailed account.

To define the \emph{Palm distributions}, we introduce the Campbell measure of $\Phi$ by $\mathscr C_{\Phi}(B \times L) = \E[\Phi_B \indicator(\Phi \in L)]$, for  $B \in \Xcr$ and  $L \in \calM_{\X}$.
Under the assumption that $M_\Phi$ is $\sigma$-finite, then $\mathscr C_{\Phi}$ admits a disintegration
\[
   \mathscr C_{\Phi}(B \times L) = \int_B \plaw_{\Phi}^x(L) M_\Phi(\dd x),
\]
where $\{\plaw_{\Phi}^x\}_{x \in \X}$ is the (a.s.) unique disintegration probability kernel of $\mathscr C_\Phi$ with respect to $M_\Phi$, and it is referred to as the Palm kernel of $\Phi$.
For fixed $x \in \X$, $\plaw_{\Phi}^x$ is a probability distribution over $(\M_{\X}, \calM_{\X})$, named the Palm distribution of $\Phi$ at $x$, and thus it can be identified with the law of a point process $\Phi_x \sim \plaw_{\Phi}^x$, which is consequently called a \emph{Palm version} of $\Phi$ at $x$. Since $\Pp(\Phi_x(\{x\}) \geq  1) = 1$, i.e., $x$ is a trivial atom of $\Phi_x$, one can define the point process $\Phi^!_{x}:  = \Phi_{x} - \delta_{x}$, which is called 
the \textit{reduced Palm version of $\Phi$ at $x \in \X$}, whose associated reduced Palm kernel is indicated by $\plaw_{\Phi}^{!x}$.

In a similar fashion, under the assumption that the $k$-th factorial moment measure $M_\Phi^{(k)}$ is $\sigma$-finite, it is possible to construct the family of  $k$-th order Palm distributions $\{\plaw_{\Phi}^{\underline{x}}\}_{\underline{x} \in \X^k}$, and the generic probability measure $\plaw_{\Phi}^{\underline{x}}$ can be interpreted as the distribution of $\Phi$ given that $\underline{x} = (x_1, \ldots, x_k)$ are atoms of $\Phi$. 
Again, by removing the trivial atoms $(x_1, \ldots , x_k)$, we obtain the reduced Palm distributions $\plaw_{\Phi}^{!\underline{x}}$, namely the probability law of
\[
\Phi^!_{\underline{x}} : = \Phi_{\underline{x}} -\sum_{j =1}^k \delta_{x_j}.
\]

\section{General results on Palm distributions of point processes}

In the present section we prove two general results concerning point process theory that will be useful in the sequel but that could be also of independent interest.
First, we characterize the reduced Palm distribution of a mixture of point processes, which is a mixture of the reduced Palm distributions of the individual components. 
\begin{proposition}[Palm distributions of mixtures of point processes]\label{prp:palm_mixture}
    Let $\plaw_{\Phi_i}$, $i=1, \ldots, m$, be probability distributions over $(\mathbb M_{\X}, \mathcal{M}_{\X})$ and $p_1, \ldots, p_m$ such that $p_i \geq 0$, $\sum_{i=1}^m p_i = 1$. Define $\Phi$ such that $\plaw_{\Phi} = \sum_{i=1}^m p_i \plaw_{\Phi_i}$ and $\Phi_i$ with law $\plaw_{\Phi_i}$. 
    Then
    \[
        \plaw^{!x}_{\Phi} = \sum_{i=1}^m w_i(x) \plaw^{!x}_{\Phi_i}, \qquad w_i(x) = p_i \frac{\dd M_{\Phi_i}}{\dd M_{\Phi}}(x) .
    \]
\end{proposition}
\begin{proof}
    By the Campbell-Little-Mecke (\textsc{clm}) formula
    \[
         \E \int f(x, \Phi - \delta_{x}) \Phi(\dd x)
=\sum_{i=1}^m p_i \int f(x,\varphi) \plaw^{!x}_{\Phi_i}(\dd\varphi) M_{\Phi_i}(\dd x).
    \]
    Since $M_{\Phi}$ clearly dominates all the $M_{\Phi_i}$'s we have $M_{\Phi_i}(\dd x) = \frac{\dd M_{\Phi_i}}{\dd M_{\Phi}}(x) M_{\Phi}(\dd x)$ and the proof follows.
\end{proof}

In the second result, we provide a general description of the Janossy measures of finite point processes in terms of their Palm distributions and their factorial moment measures. Remind that, for any finite point process $\Phi$, the family of Janossy measures $J_k(\cdot)$, $k\geq 0$, characterizes its probability distribution \citep[Proposition 5.3.II,][]{DaVeJo1}. This result can also be found in \cite{coeurjolly2017tutorial}.

\begin{theorem}[Janossy measures of finite point processes]\label{thm:janossy_general}
Let $\Phi$ be a finite point process on $\X$. Then, its Janossy measures satisfy
\[
J_k(\dd x_1\,\ldots\, \dd x_k) = \prob(\Phi^!_{(x_1,\ldots,x_k)}(\X\setminus \{x_1,\ldots,x_k\} ) = 0)  M^{(k)}_{\Phi}(\dd x_1\,\ldots\, \dd x_k).
\]
Moreover, if $\Phi$ is a simple point process, it simplifies as
\[
J_k(\dd x_1\,\ldots\, \dd x_k) = \prob(\Phi^!_{(x_1,\ldots,x_k)}(\X) = 0)  M^{(k)}_{\Phi}(\dd x_1\,\ldots\, \dd x_k).
\]
\end{theorem}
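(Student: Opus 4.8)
The plan is to test both measures in the claim against an arbitrary bounded measurable $g\colon \X^k \to \R_+$ and show the resulting scalar identities coincide; since a ($\sigma$-finite) measure on $\X^k$ is determined by its integrals against such $g$, this suffices. First I would recall the characterization of the Janossy measure as the restriction of $\Phi$ to the event of having exactly $k$ atoms: for every such $g$,
\[
\int_{\X^k} g(\bm x)\, J_k(\dd\bm x) = \E\Big[\indicator(\Phi(\X)=k)\sum_{(j_1,\ldots,j_k)}^{\neq} g(X_{j_1},\ldots,X_{j_k})\Big],
\]
where the distinct-index sum is the $k$-th factorial power $\Phi^{(k)}$ evaluated at $g$. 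Testing this against $g\equiv 1$ recovers the normalization $J_k(\X^k)=k!\,\prob(\Phi(\X)=k)$, confirming it is the right characterization.

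Next I would expand the right-hand side of the claim. Writing the avoidance probability as $\prob(\Phi^!_{\bm x}(\X\setminus\{\bm x\})=0)=\int \indicator(\varphi(\X\setminus\{\bm x\})=0)\,\plaw_\Phi^{!\bm x}(\dd\varphi)$ and integrating against $g$, I would apply the $k$-th order reduced-Palm Campbell (CLM) formula to the integrand $F(\bm x;\varphi)=g(\bm x)\,\indicator(\varphi(\X\setminus\{\bm x\})=0)$. This turns the joint integral against $M_\Phi^{(k)}$ and $\plaw_\Phi^{!\bm x}$ into
\[
\E\Big[\sum_{(j_1,\ldots,j_k)}^{\neq} g(X_{j_1},\ldots,X_{j_k})\,\indicator\big((\Phi-\textstyle\sum_{i}\delta_{X_{j_i}})(\X\setminus\{X_{j_1},\ldots,X_{j_k}\})=0\big)\Big].
\]
The remaining task is to identify the indicator: the reduced configuration $\Phi-\sum_{i}\delta_{X_{j_i}}$ charges nothing outside $\{X_{j_1},\ldots,X_{j_k}\}$ exactly when every atom of $\Phi$ other than the selected $k$ sits at one of the selected locations. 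When $\Phi$ is simple the selected locations are distinct, so this forces $\Phi$ to have no further atoms, i.e. the event equals $\{\Phi(\X)=k\}$; it then factors out of the sum and reproduces the Janossy characterization above. Since for a simple process the reduced Palm version a.s. places no atom at the conditioning points, $\X\setminus\{\bm x\}$ may be replaced by $\X$, giving the simplified formula. Carrying out the same computation without simplicity leaves the set $\X\setminus\{\bm x\}$ in place, which is exactly what accommodates additional multiplicity accumulating at the conditioning locations, and yields the general statement.

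The main obstacle is this last bookkeeping on the diagonal: matching the reduced-Palm avoidance event to the correct Janossy characterization when $\Phi$ need not be simple, which is precisely what dictates $\X\setminus\{\bm x\}$ versus $\X$; one must also check the CLM formula applies, i.e. that $M_\Phi^{(k)}$ is $\sigma$-finite so that the $k$-th order Palm kernel $\plaw_\Phi^{!\bm x}$ exists and disintegrates the $k$-th order Campbell measure. An equivalent route, bypassing the Campbell formula, starts from the inversion of the Janossy measures in terms of factorial moment measures, $J_k(\dd\bm x)=\sum_{m\geq 0}\tfrac{(-1)^m}{m!}\int_{\X^m}M_\Phi^{(k+m)}(\dd\bm x\,\dd\bm y)$, inserts the nested disintegration $M_\Phi^{(k+m)}(\dd\bm x\,\dd\bm y)=M_{\Phi^!_{\bm x}}^{(m)}(\dd\bm y)\,M_\Phi^{(k)}(\dd\bm x)$ (the higher-order analogue of Proposition 3.3.9 in \citep{BaBlaKa}), factors out $M_\Phi^{(k)}(\dd\bm x)$, and recognizes the alternating series in $m$ as the void probability of $\Phi^!_{\bm x}$; there the delicate point reappears as the treatment of coincidences between the integration variables $\bm y$ and the conditioning points $\bm x$.
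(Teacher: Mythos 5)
Your argument is, at its core, the same as the paper's: both proofs reduce the identity to the $k$-th order reduced Campbell (\textsc{clm}) formula applied to the avoidance indicator. You run the formula from the Palm side, testing against a function $g$ and using the characterization $\int g\, \dd J_k = \E[\indicator(\Phi(\X)=k)\,\Phi^{(k)}(g)]$, whereas the paper starts from the infinitesimal-ball expression for the Janossy density, converts the product of indicators $\prod_j \indicator_{\{1\}}(\Phi(\dd x_j))$ into an integral against $\Phi^{(k)}$, and then invokes the same formula. Your weak formulation is arguably cleaner, since it avoids the infinitesimal heuristics, and for simple processes your identification of the event $\{(\Phi-\sum_i\delta_{X_{j_i}})(\X\setminus\{X_{j_1},\ldots,X_{j_k}\})=0\}$ with $\{\Phi(\X)=k\}$ on the support of the factorial-power sum is correct; the subsequent replacement of $\X\setminus\{\bm x\}$ by $\X$ is also justified exactly as you say. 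Your alternative route via the inversion formula $J_k=\sum_m \tfrac{(-1)^m}{m!}\int M_\Phi^{(k+m)}$ is not in the paper but would also work.

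The gap is the non-simple case, which you flag as the ``main obstacle'' and then dispose of by assertion. Carrying out your own computation without simplicity, the right-hand side tested against $g$ becomes
\[
\E\Big[\sum_{(j_1,\ldots,j_k)}^{\not=} g(X_{j_1},\ldots,X_{j_k})\,\indicator\big(\Phi \text{ has no atoms outside } \{X_{j_1},\ldots,X_{j_k}\}\big)\Big],
\]
and this is \emph{not} $\int g\,\dd J_k$: take $\Phi = 2\delta_a+\delta_b$ deterministically and $k=2$; then $J_2\equiv 0$ since $\Phi(\X)=3$ a.s., yet the pair $(X_{j_1},X_{j_2})=(a,b)$ leaves behind $\delta_a$, which charges nothing outside $\{a,b\}$, so the displayed expectation is strictly positive. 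In other words, the delicate diagonal bookkeeping resolves in the opposite direction from what you claim: the event $(\Phi-\sum_{i}\delta_{X_{j_i}})(\X)=0$ equals $\{\Phi(\X)=k\}$ identically, with no simplicity assumption, so it is the ``simplified'' formula with $\prob(\Phi^!_{\bm x}(\X)=0)$ that holds for a general finite point process, while the version with $\X\setminus\{x_1,\ldots,x_k\}$ requires simplicity (off the diagonal the two coincide for simple processes, which is all that is needed for the \textsc{sncp} application). To be fair, the paper's own proof shares this blind spot --- the step equating $\prod_j\indicator_{\{1\}}(\Phi(\dd x_j))$ with $\int\prod_j\delta_{y_j}(\dd x_j)\,\Phi^{(k)}(\dd y_1\ldots\dd y_k)$ also presupposes at most one atom per ball --- but since you explicitly identified this as the crux, the proof is incomplete until that step is done honestly.
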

\begin{proof}
Let $x_1,\ldots,x_k$ be a generic set of points and denote with $\dd x_j$ the ball of radius $\epsilon$ and center $x_j$, as $j = 1,\ldots,k$. Take the radius $\epsilon$ small enough so that the balls are disjoint. Denote with $\tilde{\X} = \X \setminus \cup_{j=1}^k \dd x_j$. In this case, the Janossy measure correspond to
\[
\begin{split}
J_k(\dd x_1\,\ldots\,\dd x_k) & = \prob\left( \Phi(\dd x_1) = 1,\ldots, \Phi(\dd x_k ) = 1, \Phi(\tilde{\X}) = 0 \right) \\
&= \E\left[ \prod_{j=1}^k \indicator_{\{1\}}(\Phi(\dd x_j)) \times \indicator_{\{0\}} (\Phi(\tilde{\X})) \right].
\end{split}
\]
Proceed with the following computations:
\begin{equation}\label{eq:janossy_start}
\begin{aligned}
    J_k(\dd x_1\,\ldots\,\dd x_k) &= \E\left[ \prod_{j=1}^k \indicator_{\{1\}}(\Phi(\dd x_j)) \times \indicator_{\{0\}} (\Phi(\tilde{\X})) \right]\\
    &= \E\left[ \int_{\X^k} \indicator_{\{0\}} (\Phi(\tilde{\X})) \prod_{j=1}^k \delta_{y_j} (\dd x_j) \Phi^k(\dd y_1\,\ldots\, \dd y_k)  \right],
\end{aligned}
\end{equation}
where $\Phi^k$ is the $k$-th power of $\Phi$. Note that the term $\prod_{j=1}^k \delta_{y_j}(\dd x_j)$ entails that the integrand is zero on sets of the type
\[
    \{\underline{y} \in \X^k : \; y_j = y_\ell \text{ for } j \neq \ell \}.
\]
Therefore we can replace $\Phi^k$ with the $k$-th factorial power $\Phi^{(k)}$. Moreover, observe that for all $\underline{y}$ such that the integrand is non zero, $\Phi(\tilde{\X}) = (\Phi - \sum_{j=1}^k \delta_{y_j}) (\tilde{\X})$. Continuing from \eqref{eq:janossy_start}, we get
\begin{align}
        J_k(\dd x_1\,\ldots\,\dd x_k) &= \E\left[ \int_{\X^k} \indicator_{\{0\}} ((\Phi- \sum_{j=1}^k \delta_{y_j})(\tilde{\X})) \prod_{j=1}^k \delta_{y_j} (\dd x_j) \Phi^{(k)}(\dd y_1\,\ldots\, \dd y_k)  \right] \notag\\
        &= \int_{\X^k} \E\left[ \indicator_{\{0\}} ( \Phi^!_{\underline{y}}(\tilde{\X}))\right] \prod_{j=1}^k \delta_{y_j} (\dd x_j) M^{(k)}_{\Phi}(\dd y_1\,\ldots\, \dd y_k) \notag\\
        &= \E\left[ \indicator_{\{0\}} ( \Phi^!_{\underline{x}}(\tilde{\X}))\right] M^{(k)}_{\Phi}(\dd x_1\,\ldots\, \dd x_k), 
\end{align}
where the second equality follows from the \textsc{clm} formula.     
\end{proof}

\section{Proofs of the main results and extensions}

\subsection{Extensions of \Cref{teo:main}}
\label{app:extensions}

In this section, we present and discuss two natural extensions to the main theoretical result of the paper in \Cref{teo:main}. Firstly, we provide the characterization of the Palm distributions of the superposition of multiple independent point
processes. This is shown in the next corollary.
\begin{corollary}\label{cor:palm_superposition_multiple_processes}
     Let $\Phi_1, \ldots, \Phi_m$ be independent point processes on $\X$, then
    \[
        \left(\sum_{h=1}^m \Phi_h\right)_x \deq \Phi_{i x} + \sum_{q \neq i} \Phi_q \quad \text{with probability proportional to } M_{\Phi_i}(\dd x), \quad \text{for } i=1,\ldots,m.
    \]
    Moreover, the same distributional equivalence holds true for the corresponding reduced Palm versions.
\end{corollary}
\begin{proof}
    The proof is analogous to the one of \Cref{teo:main}, reported in the main paper.
\end{proof}

Secondly, the Palm distributions of the superposition of two independent processes under multiple conditioning points can be formally addressed by applying the Palm algebra. Under some technical conditions on the factorial moment measures of $\Phi$ (see Section 3.3.2 in \cite{BaBlaKa}), Palm algebra allows to write
\begin{equation}\label{eq:palm_algebra}
    \left(\Phi^!_{\underline{x}}\right)^!_{\underline{y}} \deq \Phi^!_{(\underline{x}, \underline{y})}, \quad (\underline{x}, 
    \underline{y}) \in \X^k \times \X^\ell.
\end{equation}
As a warm up to the proof of Theorem \ref{teo:multiple}, we investigate the special case of the Palm distributions of the superposition of two independent processes under two conditioning points. The following corollary holds true.

\begin{corollary}
Let $\Phi_1$ and $\Phi_2$ be two independent point processes, and $\Phi= \Phi_1+\Phi_2$ the corresponding superposition. For any $(x, y) \in \X^2$, we have
    \[
    (\Phi_1 + \Phi_2)^!_{(x, y)} \deq \begin{cases}
        (\Phi_1)^!_{(x, y)} + \Phi_2 &\text{with probability equal to } \displaystyle{\frac{ \dd M_{\Phi_1}^{(2)}}{\dd M_{\Phi}^{(2)} } (x,y)}\\
        (\Phi_1)^!_{x} +  (\Phi_2)^!_{y} &\text{with probability equal to }  \displaystyle{\frac{\dd M_{\Phi_{1}}  \times M_{\Phi_{2}}}{\dd M_{\Phi}^{(2)} } (x,y)}\\
       (\Phi_1)^!_{y} +  (\Phi_2)^!_{x} &\text{with probability equal to }   \displaystyle{\frac{\dd M_{\Phi_{1}} \times M_{\Phi_{2}}}{\dd M_{\Phi}^{(2)} } (y,x)}\\
        \Phi_1+ (\Phi_2)^!_{(x, y)} &\text{with probability equal to }  \displaystyle{\frac{\dd M_{\Phi_2}^{(2)}}{\dd M_{\Phi}^{(2)} } (x,y) }\\
    \end{cases}
    \]
    where $M_{\Phi_1} \times M_{\Phi_2}$ denotes the product measure.
\end{corollary}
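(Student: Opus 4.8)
The plan is to obtain the four-way mixture by iterating the reduced form of Theorem \ref{teo:main} twice, using the Palm algebra identity \eqref{eq:palm_algebra} to convert the two successive single-point reductions at $x$ and then at $y$ into a single reduction at the pair $(x,y)$. The only ingredient beyond Theorem \ref{teo:main} is a reweighting rule for the Palm version of a finite mixture, which I would derive directly from the linearity of the Campbell measure.

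First I would record the mixture rule. Suppose a point process $\Psi$ satisfies $\Psi \deq \Psi^{(a)}$ with fixed probability $w_a$, $a \in \{1,2\}$. Then $\mathscr C_\Psi = \sum_a w_a \mathscr C_{\Psi^{(a)}}$ and $M_\Psi = \sum_a w_a M_{\Psi^{(a)}}$, so disintegrating $\mathscr C_\Psi$ with respect to $M_\Psi$ gives, for $M_\Psi$-almost every $y$,
\[
\plaw_\Psi^y = \sum_{a} w_a\, \frac{\dd M_{\Psi^{(a)}}}{\dd M_\Psi}(y)\, \plaw_{\Psi^{(a)}}^y;
\]
that is, $\Psi_y$ is again the mixture of the Palm versions $\Psi^{(a)}_y$, reweighted by the densities $w_a\, \dd M_{\Psi^{(a)}}/\dd M_\Psi(y)$ (which sum to one). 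Subtracting $\delta_y$, the same holds for the reduced versions.

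Then I would carry out the two reductions. Applying the reduced form of Theorem \ref{teo:main} at $x$ writes $(\Phi_1+\Phi_2)^!_x$ as the mixture of $\Psi^{A} := (\Phi_1)^!_x + \Phi_2$ (weight $w_A = \dd M_{\Phi_1}/\dd M_\Phi(x)$) and $\Psi^B := \Phi_1 + (\Phi_2)^!_x$ (weight $w_B = \dd M_{\Phi_2}/\dd M_\Phi(x)$). Each of $\Psi^A,\Psi^B$ is a superposition of two independent processes, so Theorem \ref{teo:main} applies again to each reduced Palm version at $y$; using $((\Phi_i)^!_x)^!_y \deq (\Phi_i)^!_{(x,y)}$ from \eqref{eq:palm_algebra}, I recognize the four resulting components precisely as the four cases in the statement. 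Finally I would combine the two layers of weights through the mixture rule: writing $M$ for the mean measure of $(\Phi_1+\Phi_2)^!_x$, the chain rule for Radon--Nikodym derivatives cancels the intermediate densities $\dd M_{\Psi^A}/\dd M$ and $\dd M_{\Psi^B}/\dd M$, leaving, for instance, the first branch with weight $w_A\, \dd M_{(\Phi_1)^!_x}/\dd M(y)$. Invoking the standard second-order disintegration $M_{\Phi_i}^{(2)}(\dd x\,\dd y) = M_{\Phi_i}(\dd x)\, M_{(\Phi_i)^!_x}(\dd y)$ together with $M_\Phi^{(2)}(\dd x\,\dd y) = M_\Phi(\dd x)\, M_{\Phi^!_x}(\dd y)$, I rewrite each product as a single density against $M_\Phi^{(2)}$, identifying this weight with $\dd M_{\Phi_1}^{(2)}/\dd M_\Phi^{(2)}(x,y)$. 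The remaining three branches follow identically, the cross terms producing the product measure $M_{\Phi_1}\times M_{\Phi_2}$ and the symmetry $M_\Phi^{(2)}(\dd x\,\dd y)=M_\Phi^{(2)}(\dd y\,\dd x)$ accounting for the evaluation at $(y,x)$ in the third case.

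The routine but delicate part is this last bookkeeping: tracking the two-level Radon--Nikodym derivatives and recombining them, via the factorial-moment disintegrations, into the four densities against $M_\Phi^{(2)}$. The genuine hypotheses to verify are the $\sigma$-finiteness and regularity conditions on the first- and second-order factorial moment measures of $\Phi_1,\Phi_2$ that validate both the Palm algebra identity \eqref{eq:palm_algebra} and the disintegrations used above; under these, the argument is exact.
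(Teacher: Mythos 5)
Your proposal is correct and follows essentially the same route as the paper: Palm algebra to split the two-point reduction into two successive one-point reductions, Theorem \ref{teo:main} applied at each stage, a mixture rule for Palm versions of mixtures (the paper's Proposition \ref{prp:palm_mixture}, proved there via the Campbell--Little--Mecke formula, equivalently your Campbell-measure linearity argument), and the disintegrations $M_{\Phi_i}^{(2)}(\dd x\,\dd y)=M_{\Phi_i}(\dd x)M_{(\Phi_i)^!_x}(\dd y)$ and $M_{\Phi}(\dd x)M_{\Phi^!_x}(\dd y)=M_{\Phi}^{(2)}(\dd x\,\dd y)$ to collapse the two-level weights into densities against $M_\Phi^{(2)}$. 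No substantive differences.
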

\begin{proof}
By Palm algebra in \eqref{eq:palm_algebra}, we have that 
\[(\Phi_1 + \Phi_2)^!_{(x, y)}=((\Phi_1 + \Phi_2)^!_x )_y^{!}.
\]
From \Cref{teo:main}, $(\Phi_1 + \Phi_2)^!_x $ is characterized by the following mixture of point processes,
\begin{equation}\label{eq:psi_mixture}
     (\Phi_1 + \Phi_2)^!_{x} \deq \begin{cases}
            (\Phi_1)^!_{ x} + \Phi_2 & \text{with probability proportional to }  M_{\Phi_1}(\dd x) \\
            \Phi_{1} + (\Phi_2)^!_{x} & \text{with probability proportional to } M_{\Phi_2}(\dd x).
        \end{cases}
\end{equation}
For notational clarity in the following of the proof, let $\Psi$ denote $(\Phi_1 + \Phi_2)^!_{x}$, let $\Psi_1$ denote $(\Phi_1)^!_{ x} + \Phi_2$ and let $\Psi_2$ denote $\Phi_{1} + (\Phi_2)^!_{x}$. The goal is then to characterize the distribution of $\Psi^!_y$. 

First, observe that the mixture representation in \eqref{eq:psi_mixture} can be written in terms of the probability laws  of the processes as
\[
\plaw_{\Psi} = \frac{\dd M_{\Phi_1}}{\dd M_{\Phi}} (x) \plaw_{\Psi_1} + \frac{\dd M_{\Phi_2}}{\dd M_{\Phi}} (x) \plaw_{\Psi_2}.
\]
Second, the law of $\Psi^!_y$ is characterized by applying \Cref{prp:palm_mixture}. Specifically, it holds that
\begin{equation}\label{eq:law_psi_y}
    \plaw^{!y}_{\Psi} = \frac{\dd M_{\Phi_1}}{\dd M_{\Phi}} (x) \frac{\dd M_{\Psi_1}}{\dd M_{\Psi}} (y) \plaw^{!y}_{\Psi_1} + \frac{\dd M_{\Phi_2}}{\dd M_{\Phi}} (x) \frac{\dd M_{\Psi_2}}{\dd M_{\Psi}} (y) \plaw^{!y}_{\Psi_2}.
\end{equation}
Now, we need to determine the laws of $(\Psi_1)^!_y$ and $(\Psi_2)^!_y$. Still applying \Cref{teo:main}, we have
\[
\plaw^{!y}_{\Psi_1} = \frac{\dd M_{(\Phi_1)^!_x}}{\dd M_{\Psi_1}} (y) \plaw_{(\Phi_1)^!_{(x,y)} + \Phi_2} + \frac{\dd M_{\Phi_2}}{\dd M_{\Psi_1}} (y) \plaw_{(\Phi_1)^!_{x} + (\Phi_2)^!_{y}},
\]
and 
\[
\plaw^{!y}_{\Psi_2} = \frac{\dd M_{\Phi_1}}{\dd M_{\Psi_2}} (y) \plaw_{(\Phi_1)^!_{y} + (\Phi_2)^!_x} + \frac{\dd M_{(\Phi_2)^!_x}}{\dd M_{\Psi_2}} (y) \plaw_{\Phi_1 + (\Phi_2)^!_{(x,y)}}.
\]
Finally, plugging in the last two expressions in \eqref{eq:law_psi_y}, we obtain
\begin{equation*}
    \begin{aligned}
        \plaw^{!y}_{\Psi} &= \frac{\dd M_{\Phi_1}}{\dd M_{\Phi}} (x) \left[ \frac{\dd M_{(\Phi_1)^!_x}}{\dd M_{\Psi}} (y) \plaw_{(\Phi_1)^!_{(x,y)} + \Phi_2} + \frac{\dd M_{\Phi_2}}{\dd M_{\Psi}} (y) \plaw_{(\Phi_1)^!_{x} + (\Phi_2)^!_{y}} \right]\\
        &\quad + \frac{\dd M_{\Phi_2}}{\dd M_{\Phi}} (x) \left[  \frac{\dd M_{\Phi_1}}{\dd M_{\Psi}} (y) \plaw_{(\Phi_1)^!_{y} + (\Phi_2)^!_x} + \frac{\dd M_{(\Phi_2)^!_x}}{\dd M_{\Psi}} (y) \plaw_{\Phi_1 + (\Phi_2)^!_{(x,y)}}\right],
    \end{aligned}
\end{equation*}
which is equivalent to write
\[
    \Psi^!_y \deq \begin{cases}
        (\Phi_1)^!_{(x, y)} + \Phi_2 &\text{with probability }\displaystyle{\frac{\dd M_{\Phi_1}}{\dd M_{\Phi}} (x) \frac{\dd M_{(\Phi_1)^!_x}}{\dd M_{\Psi}} (y)} \\
        (\Phi_1)^!_{x} +  (\Phi_2)^!_{y} &\text{with probability }  \displaystyle{\frac{\dd M_{\Phi_1}}{\dd M_{\Phi}} (x)  \frac{\dd M_{\Phi_2}}{\dd M_{\Psi}} (y)}\\
       (\Phi_1)^!_{y} +  (\Phi_2)^!_{x} &\text{with probability }   \displaystyle{\frac{\dd M_{\Phi_2}}{\dd M_{\Phi}} (x)  \frac{\dd M_{\Phi_1}}{\dd M_{\Psi}} (y)}\\
        \Phi_1+ (\Phi_2)^!_{(x, y)} &\text{with probability }  \displaystyle{\frac{\dd M_{\Phi_2}}{\dd M_{\Phi}} (x) \frac{\dd M_{(\Phi_2)^!_x}}{\dd M_{\Psi}} (y)} \\
    \end{cases}
\]
The thesis follows from noticing that $M_{\Phi_1}^{(2)}(\dd x \, \dd y) = M_{(\Phi_1)^!_{x}}(\dd y) M_{\Phi_1}(\dd x)$ and that $M_{\Phi} (\dd x)M_{\Psi}(\dd y) = M_{\Phi} (\dd x)M_{\Phi_x^{!}}(\dd y) = M_{\Phi}^{(2)} (\dd x \, \dd y)$, thanks to \citep[Proposition 3.3.9]{BaBlaKa}. 
\end{proof} 

We finally  prove Theorem \ref{teo:multiple}, presented in the paper.
\begin{proof}[of Theorem \ref{teo:multiple}]
We proceed by induction on the number of points $k$. Observe that for $k=1$, the statement is a simple rewriting of Corollary \ref{cor:palm_superposition_multiple_processes}.
Assume now that the statement holds for $k-1\geq 1$ points, and we show it consequently holds for $k$ points. Fix $\underline{x}=(x_1,\ldots,x_k)$ with pairwise distinct coordinates, and write
\[
\underline{x}^-=(x_1,\ldots,x_{k-1}),\qquad \underline{T}^-=(T_1,\ldots,T_{k-1}).
\]
Moreover, we write $\underline{x}_i(\underline{t}^-) = (x_j\colon j=1, \ldots, k-1, \ t_j = i)$ to stress the dependence on $\underline{t}^-$.
By Palm algebra, for $M_\Phi^{(k)}$-almost all $\underline{x}$, we can write
\begin{equation}
\label{eq:palm-algebra-n}
\Phi^!_{\underline{x}}\ \deq\ \left(\Phi^!_{\underline{x}^-}\right)^!_{x_k}.
\end{equation}
By the induction hypothesis, for $M_\Phi^{(k-1)}$-almost all $\underline{x}^-$, the following mixture representation holds true
\[
\plaw_{\Phi^{!}_{{\underline{x}^-}}}
=\sum_{\underline{t}^-\in\{1,\ldots,m\}^{k-1}} \pi_{\underline{t}^-}(\underline{x}^-)\,\plaw_{\Psi_{\underline{t}^-}},
\quad \text{where} \quad
\Psi_{\underline{t}^-}:=\sum_{i=1}^m (\Phi_i)^!_{\underline{x}_i(\underline{t}^-)},
\]
moreover, conditionally on $\underline{T}^-=\underline{t}^-$, the processes $(\Phi_i)^!_{\underline{x}_i(\underline{t}^-)}$ are mutually independent (since the $\Phi_i$'s are independent), and
\begin{equation}
\label{eq:ind-hyp-weights}
\pi_{\underline{t}^-}(\underline{x}^-)=P(\underline{T}^-=\underline{t}^-)\propto \prod_{i=1}^m  \rho_{\Phi_i}^{(n_i(\underline{t}^-))}\left(\underline{x}_i(\underline{t}^-)\right)
\end{equation}
where we have made explicit the dependence on $\underline{x}^-$. Observe that the $\pi_{\underline{t}^-}(\underline{x}^-)$'s are normalized and sum to $1$ as $\underline{t}^-$ varies in $\{1, \ldots, m\}^{k-1}$.

We now take the reduced Palm distribution at the additional point $x_k$. Applying Proposition \ref{prp:palm_mixture} to the above mixture yields
\begin{equation}
\label{eq:mix-after-propS1}
\mathsf{P}_{\left(\Phi^!_{\underline{x}^-}\right)^!_{x_k}}
=\sum_{\underline{t}^-\in\{1,\ldots,m\}^{k-1}} \widetilde{\pi}_{\underline{t}^-}(\underline{x})\,\plaw^{!x_k}_{\Psi_{\underline{t}^-}},
\qquad
\widetilde{\pi}_{\underline{t}^-}(\underline{x})=\pi_{\underline{t}^-}(\underline{x}^-)\,\frac{\dd M_{\Psi_{\underline{t}^-}}}{\dd M_{\Phi^!_{\underline{x}^-}}}(x_k).
\end{equation}
Observe that also the $\widetilde{\pi}_{\underline{t}^-}(\underline{x})$'s sum to 1 over $\underline{t}^-$.

Next, fix $\underline{t}^-$ and analyze $\plaw^{!x_k}_{\Psi_{\underline{t}^-}}$.
Since $\Psi_{\underline{t}^-}$ is the superposition of the independent processes $(\Phi_i)^!_{\underline{x}_i(\underline{t}^-)}$, as $i=1, \ldots, m$, Corollary \ref{cor:palm_superposition_multiple_processes} applied to
$\sum_{i=1}^m (\Phi_i)^!_{\underline{x}_i(\underline{t}^-)}$ implies that $\plaw^{!x_k}_{\Psi_{\underline{t}^-}}$ is a mixture over $i\in\{1,\ldots,m\}$ with
mixing weights equal to
\[
\frac{\dd M_{(\Phi_i)^!_{\underline{x}_i(\underline{t}^-)}}}{\dd M_{\Psi_{\underline{t}^-}}}(x_k), 
\]
and mixture component equal (by Palm algebra within
$\Phi_j$) to
\[
\left((\Phi_i)^!_{\underline{x}_i(\underline{t}^-)}\right)^!_{x_k}\ +\ \sum_{q\neq i} (\Phi_q)^!_{\underline{x}_q(\underline{t}^-)}
\ \deq\
(\Phi_i)^!_{(\underline{x}_i(\underline{t}^-),x_k)}\ +\ \sum_{q\neq i} (\Phi_q)^!_{\underline{x}_q(\underline{t}^-)}.
\]
Equivalently, if we define the extended allocation $\underline{t}\in\{1,\ldots,m\}^k$ by $t_j=t^-_j$ for $j\leq k-1$ and $t_k=i$, then the
corresponding component can be written compactly as 
\[
\sum_{i=1}^m (\Phi_i)^!_{\underline{x}_i(\underline{t})}.
\]
Moreover, conditional independence of the summands holds because each summand is a measurable function of $\Phi_i$ alone and the
$\Phi_i$'s are independent.

Combining \eqref{eq:palm-algebra-n}--\eqref{eq:mix-after-propS1} with the above mixture for each $\plaw^{!x_k}_{\Psi_{\underline{t}^-}}$, we obtain a
mixture representation for $\Phi^!_{\underline{x}}$ indexed by full allocations $\underline{t}\in\{1,\ldots,m\}^k$, with mixture component
$\sum_{i=1}^m (\Phi_i)^!_{\underline{x}_i(\underline{t})}$.

It remains to compute the resulting weights and show they match the product of factorial-moment densities.
Let $\underline{t}\in\{1,\ldots,m\}^k$ and write $\underline{t}^-=(t_1,\ldots,t_{k-1})$ and $t_k=i$.
Up to a normalizing constant (depending on $\underline{x}$ only), the weight assigned to $\underline{t}$ is proportional to 
\begin{equation}
\label{eq:weight-decomp}
\pi_{\underline{t}^-}(\underline{x}^-)\,M_{(\Phi_i)^!_{\underline{x}_i(\underline{t}^-)}}(\dd x_k).
\end{equation}
%\begin{align*}
%    P(\underline{T}=\underline{t}\mid \underline{x}) & = \tilde \pi_{t^-}(x^-) \frac{d M_{\Phi^!_{x_j(t^-)}} }{d M_{\Psi_{t^-}}}(x_k)  \\
%    &=  \pi_{t^-}(x^-) \frac{d M_{\Psi_{t^-}}}{d M_{\Phi^!_{x^-}}}(x_k)  \frac{d M_{\Phi^!_{x_j(t^-)}} }{d M_{\Psi_{t^-}}}(x_k) \\
%    &= \pi_{t^-}(x^-) \frac{d M_{\Phi^!_{x_j(t^-)}} }{d M_{\Phi^!_{x^-}}}(x_k) \\
%    & \propto   \pi_{t^-}(x^-) M_{\Phi^!_{x_j(t^-)}} (x_k)
%\end{align*}

Under the assumption that $M_{\Phi_i}^{(r)}$ admits a density $\rho_{\Phi_i}^{(r)}$ with respect to $\mu^{\otimes r}$, Proposition 3.3.9 in \cite{BaBlaKa} gives, for $\mu$-almost all $y$,
\[
M_{(\Phi_i)^!_{\underline{x}_i(\underline{t}^-)}}(\dd y)
=\frac{\rho_{\Phi_i}^{(n_i(\underline{t}^-)+1)}\left(\underline{x}_i(\underline{t}^-),y\right)}{\rho_{\Phi_i}^{(n_i(\underline{t}^-))}\left(\underline{x}_i(\underline{t}^-)\right)}\mu(\dd y),
\]
with the convention $\rho_{\Phi_i}^{(0)}(\emptyset)=1$ when $n_i(\underline{t}^-)=0$.
Plugging this into \eqref{eq:weight-decomp} and using \eqref{eq:ind-hyp-weights}, we obtain that the weight assigned to $\underline{t}$ si proportional to
\[
\left(\prod_{h=1}^m \rho_{\Phi_h}^{(n_h(\underline{t}^-))}\left(\underline{x}_h(\underline{t}^-)\right)\right)
\frac{\rho_{\Phi_i}^{(n_i(\underline{t}^-)+1)}\left(\underline{x}_i(\underline{t}^-),x_k\right)}{\rho_{\Phi_i}^{(n_i(\underline{t}^-))}\left(\underline{x}_i(\underline{t}^-)\right)}.
\]
Simple algebra allows us to rewrite the expression above as
\[
\prod_{h\neq i} \rho_{\Phi_h}^{(n_h(\underline{t}))}\left(\underline{x}_h(\underline{t})\right)\,
\rho_{\Phi_i}^{(n_i(\underline{t}))}\left(\underline{x}_i(\underline{t})\right)
=
\prod_{i=1}^m \rho_{\Phi_i}^{(n_i(\underline{t}))}\left(\underline{x}_i(\underline{t})\right),
\]
which is exactly the claimed expression.
\end{proof}

\section{Summary statistics for corrupted point processes}

In the present section we provide other summary statistics for corrupted point processes discussed in Section \ref{sec:corruzione}. For the purpose of illustration we focus on $\X = \R^2$, and we assume to observe the corrupted point process $\Phi = \Phi_1+ \Phi_2$ , where $\Phi_1$ is the point process of interest, while $\Phi_2$ represents a random background noise, i.e., a homogeneous Poisson point process with intensity $\rho_2$. We further assume that the two point processes are independent. We also focus on different choices of $\Phi_1$, in particular we focus on  the determinantal point process \citep{Lavancier2015} and the Mat\'ern cluster process \citep{BaBlaKa}. \\

\subsection{Inhomogeneous summary statistics} \label{app:nonstat_summary}

In the sequel we assume that $\Phi_1$ is a general point process with intensity measure $M_{\Phi_1} (\dd x) = m_{1} (x) \dd x $ on $\R^2$. Thus, we also denote by $m(x)$ the intensity function of the superposed point process $\Phi$, which equals $m_{1}(x) + \rho_2$.

We define the inhomogeneous $K$-function and $G$-function. According to \cite{BaddeleyRubakTurner2015}, 
inhomogeneous $K$-function is defined as 
\begin{equation}
    \label{eq:ino_K_def}
    K_{\Phi}^{in} (r, x) = \E \left[ \int_{B (x, r) } \frac{1}{m (u)} \Phi^{!}_x (\dd u)\right]
\end{equation}
See \cite[pg. 243]{BaddeleyRubakTurner2015} for the assumptions that are typically required on $\Phi$ to define the inhomogeneous $K$-function $K^{in}_\Phi $. 
Besides, the inhomogeneous $G$-function has been introduced by \cite{Lieshout11}:
\begin{equation}
    \label{eq:ino_G_def}
    G^{in}_{\Phi}(r,x) = 1- \E \left[  \prod_{X \in \Phi^{!}_x} \left(1- \frac{\bar m}{m (X)} \indicator_{B (x, r)}(X)  \right)\right]
 \end{equation}
 where we have defined $\bar m := \inf_{x \in \R^2}  m(x) $ and we suppose $\bar m >0$.
See also \cite[pg. 277]{BaddeleyRubakTurner2015} for additional details and typical assumptions on the point process $\Phi$. 

\begin{proposition}
    Let $\Phi= \Phi_1+ \Phi_2$ be a corrupted point process on $\R^2$, where $\Phi_1$ is a point process with intensity function $m_1$ and $\Phi_2$ is a homogeneous Poisson point process with intensity $\rho_2$. Suppose that $\Phi_1$ and $\Phi_2$ are independent point processes. Thus, the inhomogeneous $K$-function equals
    \begin{equation}
        K_{\Phi}^{in} (r,x)  =\frac{m_{1} (x) }{m_{1} (x)+ \rho_2} \int_{B(x, r)} \frac{m_{1x}^! (u) - m_1 (u) }{m_{1}(u)+ \rho_2} \dd u  + \pi r^2\label{eq:K_ino_superosed}  
    \end{equation}
    where $m_{1x}^!$ is the intensity function of the reduced Palm version  of $\Phi_1$ at $x$. Besides,  the  $G$-function equals
    \begin{equation} \label{eq:G_ino_superposed}
        \begin{split}
             G^{in}_{\Phi}(r,x)  & = 1- \frac{m_1 (x)}{m_1 (x) + \rho_2}  \E \left[  \prod_{X \in \Phi^{!}_{1x}} \left(1- \frac{\bar m}{m_1 (X)+ \rho_2} \indicator_{B (x, r)}(X)  \right)\right] e^{- \rho_2 S (x, r)}\\
             & \qquad -
          \frac{\rho_2}{m_1 (x) + \rho_2}  \E \left[  \prod_{X \in \Phi_1} \left(1- \frac{\bar m}{m_1 (X)+ \rho_2} \indicator_{B (x, r)}(X)  \right)\right] e^{- \rho_2 S (x, r)},   
        \end{split}
    \end{equation}
    where $\bar m := \inf_{x \in \R^2}  m(x) >0$ and $S (x, r):= \int_{B (x, r)}  \bar m / (m_1 (u)+ \rho_2) \dd u$.
\end{proposition}
\begin{proof} 
First, we focus on the proof of \eqref{eq:K_ino_superosed}.
    Since $\Phi=\Phi_1 + \Phi_2$ and by exploiting the definition \eqref{eq:ino_K_def}, we get
    \begin{equation*}
        K_{\Phi}^{in} (r,x)  =  K_{\Phi_1+\Phi_2}^{in} (r,x) = \E \left[ \int_{B (x, r) } \frac{1}{m_{1} (u)+ \rho_2 } (\Phi_1+\Phi_2)^{!}_x (\dd u)\right].
    \end{equation*}
    An application of Theorem \ref{teo:main} for the Palm versions yields
    \begin{align*}
        K_{\Phi}^{in} (r,x) & = \frac{m_{1} (x) }{m_{1} (x)+ \rho_2}\E \left[ \int_{B (x, r) } \frac{1}{m_{1} (u)+ \rho_2 } (\Phi_{1x}^{!} + \Phi_2) (\dd u)\right]\\
        & \qquad+
        \frac{\rho_2 }{m_{1} (x)+ \rho_2}\E \left[ \int_{B (x, r) } \frac{1}{m_{1} (u)+ \rho_2 } (\Phi_1 + \Phi_{2x}^{!}) (\dd u)\right]\\
        & = \frac{m_{1} (x) }{m_{1} (x)+ \rho_2}\E \left[ \int_{B (x, r) } \frac{1}{m_{1} (u)+ \rho_2 } (\Phi_{1x}^{!} + \Phi_2) (\dd u)\right]\\
        & \qquad+
        \frac{\rho_2 }{m_{1} (x)+ \rho_2}\E \left[ \int_{B (x, r) } \frac{1}{m_{1} (u)+ \rho_2 } (\Phi_1 + \Phi_{2}) (\dd u)\right]\\
        & = \frac{m_{1} (x) }{m_{1} (x)+ \rho_2} \left\{ \E \left[ \int_{B (x, r) } \frac{1}{m_{1} (u)+ \rho_2 } \Phi_{1x}^{!}  (\dd u)\right] + \E \left[ \int_{B (x, r) } \frac{1}{m_{1} (u)+ \rho_2 } \Phi_2 (\dd u)\right] \right\}\\
        & \qquad+
        \frac{\rho_2 }{m_{1} (x)+ \rho_2}\left\{ \E \left[ \int_{B (x, r) } \frac{1}{m_{1} (u)+ \rho_2 } \Phi_1 (\dd u)\right] + \E \left[ \int_{B (x, r) } \frac{1}{m_{1} (u)+ \rho_2 } \Phi_{2} (\dd u)\right]  \right\} \\
    \end{align*}
    where we also observed that $\Phi_2 = \Phi_{2x}^!$ since $\Phi_2$ is a Poisson process. All the expected values in the last equation can be computed by virtue of the Campbell theorem to get
      \begin{align*}
        K_{\Phi}^{in} (r,x) & =\frac{m_{1} (x) }{m_{1} (x)+ \rho_2} \int_{B(x, r)} \frac{m_{1x}^! (u)+ \rho_2 }{m_{1}(u)+ \rho_2} \dd u  +
        \frac{\rho_2}{m_1 (x) + \rho_2} |B (x, r)|.
        \end{align*}
    By adding and subtracting the intensity function $m_1 (u)$ in the numerator of the first integral, we obtain
    \begin{align*}
        K_{\Phi}^{in} (r,x) & =\frac{m_{1} (x) }{m_{1} (x)+ \rho_2} \int_{B(x, r)} \frac{m_{1x}^! (u) - m_1 (u) }{m_{1}(u)+ \rho_2} \dd u  + |B (x, r)|
        \end{align*}
        and formula \eqref{eq:K_ino_superosed} follows by observing that $|B (x, r)| = \pi r^2$.\\

        As for the inhomogeneous $G$-function, we can proceed by similar arguments. Indeed, an application of Theorem \ref{teo:main} for Palm versions yields 
        \begin{align*}
            G^{in}_{\Phi}(r,x)  & = 1- \frac{m_1 (x)}{m_1 (x)+ \rho_2}\E \left[  \prod_{X \in \Phi^{!}_ {1x}+ \Phi_2} \left(1- \frac{\bar m}{m_1(X)+ \rho_2} \indicator_{B (x, r)}(X)  \right)\right]\\
            & \qquad - \frac{\rho_2 }{m_1 (x) + \rho_2}\E \left[  \prod_{X \in \Phi_1+ \Phi^{!}_{2x}} \left(1- \frac{\bar m}{m_1 (X)+ \rho_2} \indicator_{B (x, r)}(X)  \right)\right] .
        \end{align*}
Since $\Phi_2$ is a Poisson process and by the independence assumption, we can write:
\begin{align*}
            G^{in}_{\Phi}(r,x)  & = 1- \frac{m_1 (x)}{m_1 (x)+ \rho_2}\E \left[  \prod_{X \in \Phi^{!}_ {1x}+ \Phi_2} \left(1- \frac{\bar m}{m_1(X)+ \rho_2} \indicator_{B (x, r)}(X)  \right)\right] \nonumber\\
            & \qquad - \frac{\rho_2 }{m_1 (x) + \rho_2}\E \left[  \prod_{X \in \Phi_1 +\Phi_2} \left(1- \frac{\bar m}{m_1 (X)+ \rho_2} \indicator_{B (x, r)}(X)  \right)\right] \nonumber \\
            & = 1- \frac{m_1 (x)}{m_1 (x)+ \rho_2}\E \left[  \prod_{X \in \Phi^{!}_ {1x}} \left(1- \frac{\bar m}{m_1(X)+ \rho_2} \indicator_{B (x, r)}(X)  \right)\right] \E \left[  \prod_{X \in \Phi_2} \left(1- \frac{\bar m}{m_1(X)+ \rho_2} \indicator_{B (x, r)}(X)  \right)\right] \nonumber\\
            & \qquad - \frac{\rho_2 }{m_1 (x) + \rho_2}\E \left[  \prod_{X \in \Phi_1 } \left(1- \frac{\bar m}{m_1 (X)+ \rho_2} \indicator_{B (x, r)}(X)  \right)\right] \E \left[  \prod_{X \in\Phi_2} \left(1- \frac{\bar m}{m_1 (X)+ \rho_2} \indicator_{B (x, r)}(X)  \right)\right].
        \end{align*}
        Observe that
        \begin{equation} \label{eq::G_ino_proof}
        \begin{split}
          & \E \left[  \prod_{X \in \Phi_2} \left(1- \frac{\bar m}{m_1 (X)+ \rho_2} \indicator_{B (x, r)}(X)  \right)\right] \\
          & \qquad= \E \exp \left\{ \sum_{X \in \Phi_2 }\log \left( 1- \frac{\bar m}{m_1 (X)+ \rho_2} \indicator_{B (x, r)}(X)  \right)\right\}\\
            & \qquad= \exp \left\{ \int_{\R^2} \frac{\bar m}{m_1 (u)+ \rho_2} \indicator_{B (x, r)} (u) \rho_2 \dd u\right\}\\
             & \qquad = \exp \left\{ \rho_2 \int_{B (x, r)} \frac{\bar m}{m_1 (u)+ \rho_2} \dd u\right\} = 
             e^{- \rho_2 S (x, r)},
        \end{split}
        \end{equation}
        where we have used the explicit expression for the Laplace functional of a Poisson  point process.
        By substituting \eqref{eq::G_ino_proof} in the last expression for $G^{in}_{\Phi}(r,x)$, we get the second formula of the proposition.
\end{proof}

We now focus on a simple example below. 
\begin{example}[Corrupted determinantal point process]
Here we describe an application where $\Phi_1$ is a determinantal point process (\textsc{dpp}). We remind that the class of \textsc{dpp}s  are a class of repulsive point processes, see \citet{Lavancier2015,KuleszaTaskar2012} and references therein for a wide range of statistical application of \textsc{dpp}s. More precisely, let $R \subset \mathbb R^2$ be a compact set, a \textsc{dpp} $\Phi_1$ on $R$ is specified by a covariance kernel $C: R \times R \rightarrow \mathbb C$, such that the $k$-th factorial moment measure $M_{\Phi_1}^{(k)}$ equals
% has density with respect to the $k$-fold product of the Lebesgue measure given by
\[
    M_{\Phi_1}^{(k)}(\dd x_1\, \ldots\, x_k) = \det\{C(x_h, x_w)_{h,w = 1,\ldots,k}\} \dd x_1\,\ldots\, \dd x_k, \qquad x_1, \ldots, x_k \in R,
\]
where $C(x_h, x_w)_{h,w = 1,\ldots,k}$ is the $k \times k$ matrix with entries $C(x_h, x_w)$. Thus, in this case the intensity function $m_1 (x)$ equals $C (x,x)$.
%In particular, we assume that $\xi$ follows a Gaussian \textsc{dpp} \citep{Lavancier2015} with kernel $C(x, y) = \rho_\xi \exp\{-\| (x - y) / \alpha \|^2 \}$ parametrized by $(\rho_\xi, \alpha)$, with $\rho_\xi < (\pi \alpha^2)^{-1}$ to ensure the process is well-defined.
We also remind that the reduced Palm version of a \textsc{dpp} is again a \textsc{dpp} with a new covariance kernel given by
\[
C^{!}_x (u, v) = C (u, v) - \frac{C (u, x) C(v, x)}{C (x,x)}.
\]
Hence,  the intensity function of the reduced Palm \textsc{dpp} equals
\[
m_{1x}^! (u) =  C^{!}_x (u, u) = C (u, u) - \frac{C^2 (u, x)}{C (x,x)} . 
\]

We first concentrate on the evaluation of the inhomogeneous $K$-function \eqref{eq:K_ino_superosed}.
By substituting the expressions of the intensity functions for the \textsc{dpp} and its reduced Palm version  in  \eqref{eq:K_ino_superosed}, the inhomogeneous $K$-function of the superposition $\Phi_1+\Phi_2$ boils down to
\begin{equation*}
     K_{\Phi}^{in} (r,x)  = \pi r^2  - \int_{B (x,r)} \frac{C^2 (u,x)}{(C(x,x)+ \rho_2) \cdot(C(u,u)+ \rho_2)} \dd u .
\end{equation*}

Second, to derive an expression of the inhomogeneous $G$-function in \eqref{eq:G_ino_superposed}, we have to evaluate the two expected values. In order to do this, we remind that when $\Phi_1$ is a \textsc{dpp} and $\varphi : R \to [0,1]$, we have:
\begin{equation} \label{eq:prod_PHI_1}
    \E \left[ \prod_{X \in \Phi_1} \varphi (X)\right] = \det \left(  \mathcal{I} - \mathcal{K}_v \right) 
\end{equation}
where $\det$ denotes the Fredholm determinant of the difference between the identity operator $\mathcal{I}$ and the integral operator $\mathcal{K}_\varphi$ with kernel 
\[
C_v (u,v) = \sqrt{1-\varphi (u)} C (u,v) \sqrt{1- \varphi (v)}.
\]
By specializing \eqref{eq:prod_PHI_1} for the second expected value in \eqref{eq:G_ino_superposed}
, we get
\begin{equation}
    \label{eq:second_product_G_ino}
   \E \left[  \prod_{X \in \Phi_1} \left(1- \frac{\bar m}{m_1 (X)+ \rho_2} \indicator_{B (x, r)}(X)  \right)\right]  =
   \det \left( \mathcal{I} - \mathcal{K} \right)
\end{equation}
where $\mathcal{K}$ is the integral operator having kernel
\[
K (u,v) = \sqrt{\frac{\bar m}{C (u,u)+ \rho_2} \indicator_{B (x,r)} (u)} \cdot  C (u,v) \cdot
\sqrt{\frac{\bar m}{C (v,v)+ \rho_2} \indicator_{B (x,r)} (v)}.
\]
Since $\Phi_{1x}^!$ is another \textsc{dpp}, a similar argument shows that the first expected value in \eqref{eq:G_ino_superposed} equals
\begin{equation}
    \label{eq:first_product_G_ino}
   \E \left[  \prod_{X \in \Phi_{1x}^!} \left(1- \frac{\bar m}{m_1 (X)+ \rho_2} \indicator_{B (x, r)}(X)  \right)\right]  =
   \det \left( \mathcal{I} - \mathcal{K}_{x}^! \right)
\end{equation}
where $\mathcal{K}_{x}^!$ is the integral operator having kernel
\[
K_{x}^! (u,v) = \sqrt{\frac{\bar m}{C (u,u)+ \rho_2} \indicator_{B (x,r)} (u)} \cdot  C^{!}_x (u,v) \cdot
\sqrt{\frac{\bar m}{C (v,v)+ \rho_2} \indicator_{B (x,r)} (v)}.
\]
By substituting \eqref{eq:second_product_G_ino} and \eqref{eq:first_product_G_ino} in the expression  of the 
inhomogeneous $G$-function \eqref{eq:G_ino_superposed}, we get:
    \begin{equation*}
             G^{in}_{\Phi}(r,x)   = 1- \frac{m_1 (x)}{m_1 (x) + \rho_2}   \det \left( \mathcal{I} - \mathcal{K}_{x}^! \right)e^{- \rho_2 S (x, r)}
          \frac{\rho_2}{m_1 (x) + \rho_2}  \det \left( \mathcal{I} - \mathcal{K} \right) e^{- \rho_2 S (x, r)}.
    \end{equation*}

\end{example}

\subsection{Summary statistics for stationary processes} \label{app:stat_summary}

We now discuss the use of Theorem \ref{teo:main} to derive expressions of summary statistics under the condition of stationarity for $\Phi= \Phi_1+ \Phi_2$.\\

\noindent
\textbf{$K$-function.} The Ripley's $K$-function \citep{ripley1976second}, for a stationary point process $\Phi$ with intensity $M_\Phi \equiv \rho > 0$,  is defined as 
\[
    K_{\Phi}(r) = \frac{1}{\rho} \E[\Phi^!_{o}(B(o,r))], \quad \text{for } r \geq 0,
\]
where $o$ denotes a generic point of $\R^2$ and $B(o, r)$ is the ball of radius $r$ centered at $o$.
Thanks to stationarity, $K_{\Phi}$ is invariant to the choice of point $o$, which is called the \emph{typical point} of $\Phi$: the quantity $\rho K_{\Phi}(r)$ represents the expected number of points that are $r$-close to a generic point $o$, given that $\Phi$ has an atom in $o$.

An application of Theorem \ref{teo:main} yields the $K$-function for the superposition of two independent and stationary point processes $\Phi_i$, $i=1,2$ with intensities $M_{\Phi_i} \equiv \rho_i$. Specifically, letting $\rho = \rho_1 + \rho_2$, 
\begin{equation}\label{eq:Ripley_superposition}
     K_{\Phi_1 + \Phi_2}(r) = \frac{1}{\rho} \left[ \left\{K_{\Phi_1}(r) \rho_1 + \rho_2 |B(o,r)| \right\} \frac{\rho_1}{\rho} +  \left\{ \rho_1 |B(o,r)| + K_{\Phi_2}(r) \rho_2\right\} \frac{\rho_2}{\rho}  \right] ,
\end{equation}
where $|B(o,r)|= \pi r^2$ in $\R^2$.\\

\noindent
\textbf{$G$-function.} The nearest neighborhood function (or $G$-function) for a stationary point process $\Phi$ is the cumulative distribution function of the distance between a typical point  $o \in \Phi$ to the nearest other point of $\Phi$ \citep{BaddeleyRubakTurner2015}. More specifically:
\[
G_\Phi (r) = P (\Phi_o^! (B (o, r))>0),
\]
and thanks to the stationarity, the function does not depend on the choice of $o$.
As for the superposed point process $\Phi= \Phi_1 + \Phi_2$, Theorem \ref{teo:main} leads us to the following
\[
\begin{split}
G_\Phi (r) &= 1- P (\Phi_o^! (B (o, r))=0) \\
& =
1- \left[\frac{\rho_1}{\rho_1 + \rho_2} (1-G_{\Phi_1} (r)) (1- F_{\Phi_2} (r))+ \frac{\rho_2}{\rho_1 + \rho_2}
(1- F_{\Phi_1} (r)) (1-F_{\Phi_2} (r))
\right]
\end{split}
\]
where $F_{\Phi_i } (r) =  P (\Phi_i (B (o, r))>0)$ is the contact distribution function of $\Phi_i$. Since $\Phi_2$ is a Poisson process, we have that $(1-F_{\Phi_2} (r))= e^{- \pi r^2 \rho_2}$, then:
\begin{equation}
    \label{eq:G_function_superposed}
    G_\Phi (r) =
1- \left[\frac{\rho_1}{\rho_1 + \rho_2} (1-G_{\Phi_1} (r)) e^{- \pi r^2 \rho_2}+ \frac{\rho_2}{\rho_1 + \rho_2}
(1- F_{\Phi_1} (r)) e^{- \pi r^2 \rho_2}
\right].
\end{equation}

\noindent
\textbf{$A$-function.}
We finally concentrate on the $A$-function, introduced by \cite{chiu2008reduced}. This is defined as
\[
A_{\Phi} (s, r) = \E \left( s^{\Phi_o^! (B (o, r))}\right)
\]
where $r \geq 0$ and $|s|<1$. For fixed $r$, this quantity represents the probability generating function of 
$\Phi_o^! (B (o, r))$. If the point process $\Phi$ is stationary, then the $A$-function does not depend on the choice of the point $o$, and so we can consider the origin as a reference point. A plain application of Theorem \ref{teo:main} and the fact that $\Phi_2$ is a Poisson point process lead us to the following formula for the $A$-function:
\[
 A_{\Phi} (s, r) = \frac{\rho_1}{\rho_1+ \rho_2}   A_{\Phi_1} (s, r) \mathcal{G}_{\Phi_2 (B (o, r))} (s)
  +  \frac{\rho_2}{\rho_1+ \rho_2}   \mathcal{G}_{\Phi_1 (B (o, r))} (s) \mathcal{G}_{\Phi_2 (B (o, r))} (s)
\]
where $\mathcal{G}_X (s)$ denotes the probability generating function of a random variable $X$ evaluated at $s$.
In addition, by exploiting the expression of $\mathcal{G}_{\Phi_2 (B (o, r))} (s)$ for a Poisson random variable, we can write
\begin{equation}
    \label{eq:A_superposed}
    A_{\Phi} (s, r) = \frac{\rho_1}{\rho_1+ \rho_2}  e^{\rho_2 \pi r^2 (s-1)} A_{\Phi_1} (s, r)
  +  \frac{\rho_2}{\rho_1+ \rho_2}  e^{\rho_2 \pi r^2 (s-1)} \mathcal{G}_{\Phi_1 (B (o, r))} (s).
\end{equation}

\section{Minimum contract estimation for stationary point processes} \label{app:MCE}

In Section \ref{app:details_matern} we give the details for the evaluation of the $A$-function and the $K$-function used in the experiment of Section \ref{sec:corrupted_Matern} to fit the corrupted Mat\'ern cluster process via \textsc{mce}. In Section \ref{app:corrupted_dpp}, we provide another application of \textsc{mce} to fit a corrupted point process $\Phi=\Phi_1 + \Phi_2$, where $\Phi_1$ is a repulsive point pattern, i.e., a determinantal point process, and $\Phi_2$ is a random background noise. The class of determinantal point processes is briefly recalled in Section \ref{app:corrupted_dpp}.

\subsection{Fitting a corrupted Mat\'ern cluster process: details for Section \ref{sec:corrupted_Matern}}
\label{app:details_matern}

Here we provide the explicit expressions of the  $A$-function and the $K$-function used to fit the corrupted Mat\'ern cluster process via minimum contrast estimation
in Section \ref{sec:corrupted_Matern}.
%Hence, we consider a corrupted  Mat\'ern cluster process, i.e., $\Phi= \Phi_1 +\Phi_2$, where $\Phi_1$ is a 
%Mat\'ern cluster process on $\X = \R^2$. \\
Having set
\[
\kappa ( \xi ; c ) = \frac{1}{\pi R^2} \indicator_{B (c, R)} (\xi),
\]
with $R >0$, recall that a Mat\'ern cluster process is defined as
\[
\Phi_1 \mid \Phi_p \sim \textsc{pp} \left( \int_{\R^2}  \mu  \kappa (\xi ; c) \Phi_p (\dd c) \dd \xi \right), \quad 
\Phi_p  \sim \textsc{pp} (\lambda \dd c)
\]
where $\textsc{pp} (\lambda \dd c)$ denotes a homogeneous Poisson process with intensity $\lambda$ on $\R^2$, and $\lambda, \mu >0$.
The Mat\'ern cluster process is a particular instance  of the shot noise Cox process \citep{Mo03Cox}, thus the Palm distribution at a single point $x \in \R^2$ can be found in \citep{Mo03Cox}. We have
\begin{equation}
    \label{eq:shot_single}
    \Phi_{1x}^! \stackrel{d}{=} \Phi_1 + \Phi_{\zeta_x},
\end{equation}
where:
\begin{itemize}
    \item[(i)]  $\Phi_1$ is a Mat\'ern cluster process, distributed as the original one;
    \item[(ii)]  $\Phi_{\zeta_x} $ is a single cluster of points, where 
    \[
    \Phi_{\zeta_x} \mid \zeta_x \sim \textsc{pp} \left(  \mu \kappa ( \xi ; \zeta_x) \dd \xi \right),
    \quad
    \zeta_x \sim f_{\zeta_x} (c) \propto \kappa (x ; c).
    \]
\end{itemize}
Finally,  $\Phi_1$ and $\Phi_{\zeta_x}$ on the right hand side of \eqref{eq:shot_single} are independent. It can be shown that the Mat\'ern cluster process $\Phi_1$ is stationary with intensity $\rho_1 = \lambda\mu$.\\

Now consider the corrupted point process $\Phi= \Phi_1+ \Phi_2$, where $\Phi_1$ is a Mat\'ern cluster process and $\Phi_2$ is a stationary Poisson process with intensity $\rho_2$.
We first focus on the evaluation of the $A$-function, whose expression equals \eqref{eq:A_superposed}.
Since the process is stationary, the $A$-function does not depend on the specific point $x$, so we can use the origin $o$ as a reference point. 
Thanks to the representation of the Palm distribution \eqref{eq:shot_single}, the $A$-function of $\Phi_1$ equals
\begin{equation} \label{eq:A_1_shot_noise}
\begin{split}
      A_{\Phi_1} (s, r) & = \E \left[ s^{\Phi_{1o}^! (B(o,r))}\right] = \E \left[ s^{ \Phi_1 (B (o, r))}\right] \E \left[s^{ \Phi_{\zeta_o} (B (o, r))}\right] \\
   & =  \mathcal{G}_{\Phi_1 (B (o, r))} (s) \E \left[s^{ \Phi_{\zeta_o} (B (o, r))}\right] .
\end{split}
\end{equation}
The expected value in \eqref{eq:A_1_shot_noise}, can be evaluated as follows:
\begin{align*}
    \E \left[s^{ \Phi_{\zeta_o} (B (o, r))}\right] & = \E \E  \left[s^{ \Phi_{\zeta_o} (B (o, r))} \mid \zeta_o\right]  = \E \left[ e^{\mu |B (o, r) \cap B (\zeta_o, R)| (s-1)/(\pi R^2)}\right]
\end{align*}
where we have used the fact that the inner expectation is the probability generating function of a Poisson random variable. Now, we can integrate out $\zeta_o$, whose density is $\kappa(o ; c)$:
\begin{align*}
    \E \left[s^{ \Phi_{\zeta_o} (B (o, r))}\right] & = \int_{B (o, R)} e^{\mu |B (o, r) \cap B (c, R)| (s-1)/(\pi R^2)} \frac{1}{\pi R^2} \dd c.
\end{align*}
Therefore \eqref{eq:A_1_shot_noise} boils down to
\[
A_{\Phi_1} (s, r)  =   \mathcal{G}_{\Phi_1 (B (o, r))} (s) \int_{B (o, R)} e^{\mu |B (o, r) \cap B (c, R)| (s-1)/(\pi R^2)} \frac{1}{\pi R^2} \dd c .
\]
We can plug in the previous expression in \eqref{eq:A_superposed} to obtain the $A$-function of the superposed point process $\Phi$:
\begin{equation}
    \label{eq:A_superposed_shot_noise}
    \begin{split}
    A_{\Phi} (s, r) &= \left[\frac{\rho_1}{\rho_1+ \rho_2} \int_{B (o, R)}  e^{\mu |B (o, r) \cap B (c, R)| (s-1)/(\pi R^2)} \frac{1}{\pi R^2} \dd c
\right.  \\
& \qquad \qquad \left. +  \frac{\rho_2}{\rho_1+ \rho_2}  \right] e^{\rho_2 \pi r^2 (s-1)}\mathcal{G}_{\Phi_1 (B (o, r))} (s),
\end{split}
\end{equation}
where it remains to evaluate the probability generating function of the Mat\'ern cluster process $\Phi_1$. In order to do this, observe that the following representation holds true
\[
\Phi_1 \mid \Phi_p \stackrel{d}{=} \sum_{i \geq 1} \Phi_{1i} , \quad \Phi_{1i }  \mid \Phi_p \simind \textsc{pp} (\mu \kappa (\xi ; c_i) \dd \xi )
\]
where $\Phi_p = \sum_{i \geq 1} \delta_{c_i}$ is a homogeneous Poisson process with intensity $\lambda$. Thus, the previous representation entails
\begin{align*}
    \mathcal{G}_{\Phi_1 (B (o, r))} (s) & = \E \E \left( s^{\Phi_1 (B (o, r))}\mid \Phi_p\right) = \E \left[ \prod_{i \geq 1}  \E \left( s^{\Phi_{1i} (B (o, r))}\mid \Phi_p\right)\right] \\
    & = \E \left[ \prod_{i \geq 1}   \exp \left\{ \frac{\mu}{\pi R^2} |B (o, r) \cap B (c_i, R)| (s-1) \right\}\right] \\
        & = \E \left[  \exp \left\{ \int_{\R^2}\frac{\mu}{\pi R^2} |B (o, r) \cap B (c, R)| (s-1) \Phi_p (\dd c) \right\}\right] \\
        & = \exp \left\{ - \int_{\R^2} \left[ 1- \exp \left\{ \frac{\mu}{\pi R^2} |B (o, r) \cap B (c, R)|  (s-1)\right\}\right] \lambda \dd c  \right\}.
\end{align*}
The integral over $\R^2$ can be evaluated by a change of variables (polar coordinates) to get
\begin{align*}
    \mathcal{G}_{\Phi_1 (B (o, r))} (s) &  = \exp \left\{ - 2 \pi \int_0^{R+r} \left[ 1- \exp \left\{ \frac{\mu}{\pi R^2} I_{R,r} (\varrho)  (s-1)\right\}\right] \lambda \varrho \dd \varrho  \right\},
\end{align*}
 having defined $I_{R,r} (\varrho) := |B (o, r) \cap B ((\varrho, 0), R)|$. As a consequence, \eqref{eq:A_superposed_shot_noise} becomes
\begin{equation}
    \label{eq:A_superposed_shot_noise_final}
    \begin{split}
    &A_{\Phi} (s, r) = \left[\frac{\rho_1}{\rho_1+ \rho_2}\int_0^R  \exp \left\{\frac{\mu}{\pi R^2} I_{R, r} (\varrho) (s-1)\right\}  \frac{2 \varrho }{R^2} \dd \varrho
 +  \frac{\rho_2}{\rho_1+ \rho_2}  \right]  \\
& \qquad \times e^{\rho_2 \pi r^2 (s-1)}\exp \left\{ - 2 \pi \int_0^{R+r} \left[ 1- \exp \left\{ \frac{\mu}{\pi R^2} I_{R,r} (\varrho)  (s-1)\right\}\right] \lambda \varrho \dd \varrho  \right\},
\end{split}
\end{equation}
where
\[
I_{R,r} (\varrho) = \left\{ \begin{array}{cc}
   \pi \min \{ r, R\}^2  & \text{if } \varrho \leq |r-R|  \\
   r^2 \arccos{\frac{\varrho^2 + r^2 -R^2}{2 \varrho R}} + R^2 \arccos{ \frac{\varrho^2+ R^2 -r^2 }{2 \varrho R}} - \frac{1}{2} 
   \sqrt{p (\varrho, r, R)}& \text{otherwise }
\end{array}
\right.
\]
having set $p (\varrho, r, R) : = (-\varrho + r + R) (\varrho + r - R) (\varrho-r + R) (\varrho + R + r)$.\\

 % A_{\Phi} (s, r) = \frac{\rho_1}{\rho_1+ \rho_2}  e^{\rho_2 \pi r^2 (s-1)} A_{\Phi_1} (s, r)
 % +  \frac{\rho_2}{\rho_1+ \rho_2}  e^{\rho_2 \pi r^2 (s-1)} \mathcal{G}_{\Phi_1 (B (o, r))} (s).

As for the $K$-function, we can exploit \eqref{eq:Ripley_superposition} to get
\begin{equation} \label{eq:Matern_1K}
     K_{\Phi_1 + \Phi_2}(r) = \frac{1}{\rho} \left[ \left\{K_{\Phi_1}(r) \rho_1 + \rho_2 \pi r^2\right\} \frac{\rho_1}{\rho} +  \left\{ \rho_1 \pi r^2 + \pi r^2\rho_2\right\} \frac{\rho_2}{\rho}  \right] ,
\end{equation}
where we used the fact that $K_{\Phi_2}(r) = \pi r^2 $, since $\Phi_2$ is a Poisson process, and that $|B(o,r)|  = \pi r^2$. In addition,  $ K_{\Phi_1}(r)$ can be evaluated by resorting to \eqref{eq:shot_single} to obtain
\[
K_{\Phi_1}(r)
=
\pi r^2
+
\frac{2\pi}{\lambda \pi^2 R^4}
\int_0^{R}
\varrho I_{R,r} (\varrho) \dd \varrho
\]
with some simple calculations. Putting the previous expression in  \eqref{eq:Matern_1K}, we obtain 
\[
\begin{split}
     K_{\Phi_1 + \Phi_2}(r) =  2\frac{\rho_1\rho_2 \pi r^2}{\rho^2} +
    \pi r^2\frac{\rho_1^2}{\rho^2} + \pi r^2 \frac{\rho_2^2}{\rho^2} + \frac{\rho_1^2}{\rho^2} \cdot\frac{2\pi}{\lambda \pi^2 R^4}
\int_0^{R}
\varrho I_{R,r} (\varrho) \dd \varrho  .
\end{split}
\]

\subsection{Fitting a corrupted determinantal point process} \label{app:corrupted_dpp}

Determinantal point processes (\textsc{dpp}s)  are a class of repulsive point processes, see \citet{Lavancier2015,KuleszaTaskar2012} and references therein for a wide range of statistical application of \textsc{dpp}s.

Let $R \subset \mathbb R^2$ be a compact set. 
A \textsc{dpp} $\xi$ on $R$ is specified by a covariance kernel $C: R \times R \rightarrow \mathbb C$, such that the $k$-th factorial moment measure $M_{\xi}^{(k)}$ equals
% has density with respect to the $k$-fold product of the Lebesgue measure given by
\[
    M_{\xi}^{(k)}(\dd x_1\, \ldots\, x_k) = \det\{C(x_h, x_w)_{h,w = 1,\ldots,k}\} \dd x_1\,\ldots\, \dd x_k, \qquad x_1, \ldots, x_k \in R,
\]
where $C(x_h, x_w)_{h,w = 1,\ldots,k}$ is the $k \times k$ matrix with entries $C(x_h, x_w)$. In particular, we assume that $\xi$ follows a Gaussian \textsc{dpp} \citep{Lavancier2015} with kernel $C(x, y) = \rho_\xi \exp\{-\| (x - y) / \alpha \|^2 \}$ parametrized by $(\rho_\xi, \alpha)$, with $\rho_\xi < (\pi \alpha^2)^{-1}$ to ensure the process is well-defined.
From \cite{Lavancier2015}, Ripley's $K$-function of $\xi$ is
\begin{equation}\label{eq:dpp_k}
    K_{\xi}(r) = \pi r^2 - \frac{\pi \alpha^2}{2}\left(1 - \e^{-2r^2/\alpha^2}\right).
\end{equation}
While \textsc{dpp}s allow for likelihood-based inference as discussed in \cite{Lavancier2015}, this is typically numerically cumbersome due to a Fourier series expansion and the determinant of large matrices involved in the likelihood. 
Hence, in the \texttt{spatstat} package \citep{BaddeleyTurner2005}, the default way of fitting a \textsc{dpp} is through \textsc{mce} based on the $K$-function.

Assume now to observe a realization of $\xi$ corrupted by a background noise, independent of $\xi$. Let $\Phi_2$ be a homogeneous Poisson point process on $R$ with intensity $\omega$, which models the corrupting noise.
This setting naturally fits within the modeling framework of Section \ref{sec:corrupted_Matern}.
Plugging \eqref{eq:dpp_k} into \eqref{eq:Ripley_superposition}, and recalling that for the homogeneous Poisson process $\Phi_2$ we have $K_{\Phi_2}(r) = \pi r^2$, the $K$-function for $\Phi = \xi + \Phi_2$ equals
\begin{equation}\label{eq:noisy_K}
    K_{\xi + \Phi_2}(r) = \pi r^2 - \frac{\rho_\xi^2}{(\rho_\xi + \omega)^2} \frac{\pi \alpha^2}{2} \left(1 - \e^{-2r^2/\alpha^2}\right).
\end{equation}

Let $\underline{x} = (x_1, \ldots, x_k) \in R^k$ denote the observed point pattern. 
The goal is to estimate the parameters of $\Phi = \xi + \Phi_2$, namely the intensity $\rho_\xi$ of the signal process, the repulsion parameter $\alpha$, and the noise intensity $\omega$. As customary, we estimate the overall intensity $\rho = \rho_\xi + \omega$ of $\Phi$ by $\hat \rho = k / |R|$, so that we are left with estimating only $\rho_\xi$ and $\alpha$.
Following the discussion above, we apply \textsc{mce} based on the minimization of the discrepancy between the $K$ function in \eqref{eq:noisy_K} and  the edge-corrected nonparametric estimator discussed by \cite{ripley1976second} (see also \citet{MoWaBook03}).
% minimize the objective function in \eqref{eq:mce}, where $\hat s$
%is the edge-corrected nonparametric estimator discussed by \cite{ripley1976second} (see also \citet{MoWaBook03}), and $s$ is the $K$-function in \eqref{eq:noisy_K}. 
%Moreover, assuming that we observe data on a rectangular region $R = [a, b] \times [c, d]$, $r_u$ is set to one quarter of the smallest side length of $R$, following \cite{diggle2013statistical}.
For numerical purposes, we approximate the integral via numerical quadrature using Simpson's rule. The minimization is performed via the \textsc{bfgs} algorithm using the \texttt{Julia} programming language.

%\subsection{Numerical example}

\begin{table}
	\centering
\begin{tabular}{ccccccc}
    \multicolumn{3}{c}{True parameters} & \multicolumn{2}{c}{Gaussian \textsc{dpp}} & \multicolumn{2}{c}{Gaussian \textsc{dpp} + Poisson noise} \\
    $\rho_\xi$ & $\alpha$ & $u$ & $\hat{ \rho_\xi}$ & $\hat{\alpha}$ & $\hat{ \rho_\xi}$ & $\hat{\alpha}$ \\
    \midrule
    50 & 0.06  & 0.2  & 60.59 (6.99)  & 0.05 (0.02)  & 53.74 (11.56) & 0.06 (0.03) \\
    50 & 0.06  & 0.35 & 68.20 (7.82)  & 0.04 (0.02)  & 56.72 (14.20) & 0.06 (0.03) \\
    50 & 0.02  & 0.2  & 60.10 (8.19)  & 0.02 (0.02)  & 46.48 (16.69) & 0.03 (0.04) \\
    50 & 0.02  & 0.35 & 67.64 (9.20)  & 0.02 (0.02)  & 51.49 (19.40) & 0.03 (0.04) \\
   100 & 0.05  & 0.2  & 120.61 (9.35) & 0.04 (0.01)  & 107.25 (15.73)& 0.05 (0.01) \\
   100 & 0.05  & 0.35 & 135.71 (10.53)& 0.03 (0.01)  & 111.83 (21.14)& 0.05 (0.01) \\
   100 & 0.025 & 0.2  & 120.18 (11.15)& 0.02 (0.01)  & 98.30 (28.35) & 0.03 (0.03) \\
   100 & 0.025 & 0.35 & 135.22 (12.54)& 0.02 (0.01)  & 104.31 (33.28)& 0.03 (0.03) \\
  \end{tabular}
  \caption{Mean and standard deviation (in brackets) of the estimates  $(\hat{\rho_\xi}, \hat{\alpha})$ of the Gaussian \textsc{dpp} over $1,000$ independent replicated datasets, for different combinations of the true parameters reported in the left column. Middle column: parameter estimates when fitting only the Gaussian \textsc{dpp}. Right column: parameter estimates when fitting the superposition of the Gaussian \textsc{dpp} and the background noise Poisson process. }
	\label{tab:exp1}
\end{table}

We show an illustrative simulation to assess the performance of the \textsc{mce} approach based on the Ripley's $K$-function in estimating the parameters of $\Phi$. %, when modeling the superposition of a Gaussian \textsc{dpp} and a background noise Poisson process.
We generate a point pattern from a Gaussian \textsc{dpp} on the unit-square $R$ with parameters $(\rho_\xi, \alpha) \in \{(50, 0.06), (50, 0.02), (100, 0.05), (100, 0.025)\}$, and perturb the point pattern adding a realization from a homogeneous Poisson process with intensity $\omega = u \rho_\xi$ for $u \in \{0.2, 0.35\}$. 
% Given the produced point pattern, the parameter estimates $(\hat{\rho_\xi}, \hat{\alpha}, \hat{\omega})$ are obtained by solving the minimization problem in \eqref{eq:mce}, where  $s$ and $\hat{s}$  are the Ripley's $K$ function and the corresponding estimator, as described above. 
The two rightmost columns of Table \ref{tab:exp1} show the mean and standard deviation of the estimated values $(\hat{\rho_\xi}, \hat{\alpha})$ over $1,000$ independent replicated datasets, for each combination of the true parameters. 
For comparison (two middle columns), we fit Gaussian \textsc{dpp}s to the same (contaminated) point patterns, neglecting the presence of the background noise, via \textsc{mce} using Ripley's $K$-function as implemented in the \texttt{spatstat} package.
It is clear how ignoring the contamination leads to substantial bias in the estimates for $\rho_\xi$ and $\alpha$, which is mitigated by our approach. However, since our approach requires estimating the parameters of two processes instead of one, our estimators exhibit larger variances.

\section{Results and proofs for the shot noise Cox process of Section \ref{sec:sncp_inference}}

\subsection{Auxiliary results for the shot noise Cox process}\label{app:auxiliary_results}

The first lemma describes the Laplace functional of a \textsc{sncp}.

\begin{lemma}\label{lemma:laplace_sncp}
Let $\Phi \sim \textsc{sncp}(\kappa, \nu)$ and any measurable function $f: \X \to \R$. Then,
\[
\mathcal{L}_{\Phi}(f) = \exp\left\{ - \int_{\X\times\R_+} \left( 1- \exp\left\{ 
-\gamma \int_{\X} (1 - \exp\{-f(x)\} ) \kappa(x; \theta) \dd x \right\} \right) \nu(\dd \theta \dd \gamma) \right\}.
\]
\end{lemma}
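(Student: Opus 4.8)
The plan is to compute the Laplace functional $\mathcal{L}_{\Phi}(f) = \E[e^{-\Phi(f)}]$ directly from the hierarchical definition of the \textsc{sncp} given in \eqref{eq:sncp_definition}, using the tower property of conditional expectation together with the well-known closed form for the Laplace functional of a Poisson point process. First I would condition on the random driving measure $\Lambda$, so that $\Phi \mid \Lambda$ is a Poisson process with intensity $\lambda_\Lambda(x)\,\dd x$, where $\lambda_\Lambda(x) = \int_{\X\times\R_+}\gamma\,\kappa(x;\theta)\,\Lambda(\dd\theta\,\dd\gamma)$. Recall that for a Poisson process with intensity measure $\mu$, the Laplace functional is $\exp\{-\int(1-e^{-f})\,\dd\mu\}$. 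Applying this to the conditional law yields
\[
\E[e^{-\Phi(f)}\mid\Lambda] = \exp\left\{ -\int_{\X}\bigl(1 - e^{-f(x)}\bigr)\,\lambda_\Lambda(x)\,\dd x \right\}.
\]

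Next I would substitute the explicit form of $\lambda_\Lambda$ and use Fubini's theorem to interchange the order of integration over $\X$ and over $\X\times\R_+$ (with respect to $\Lambda$), rewriting the inner exponent so that the dependence on $\Lambda$ appears as a single linear functional of $\Lambda$. Concretely, the exponent becomes $-\int_{\X\times\R_+} g(\theta,\gamma)\,\Lambda(\dd\theta\,\dd\gamma)$ where $g(\theta,\gamma) := \gamma\int_{\X}(1-e^{-f(x)})\kappa(x;\theta)\,\dd x$. Thus $\E[e^{-\Phi(f)}\mid\Lambda] = e^{-\Lambda(g)}$, which is precisely the integrand one evaluates when computing the Laplace functional of $\Lambda$ itself at the test function $g$.

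The final step is to take the outer expectation over $\Lambda\sim\textsc{pp}(\nu)$, again invoking the Poisson Laplace functional formula, now with intensity measure $\nu$ and test function $g$:
\[
\mathcal{L}_{\Phi}(f) = \E[e^{-\Lambda(g)}] = \exp\left\{ -\int_{\X\times\R_+}\bigl(1 - e^{-g(\theta,\gamma)}\bigr)\,\nu(\dd\theta\,\dd\gamma)\right\}.
\]
Unfolding the definition of $g$ reproduces exactly the claimed expression. I expect the main technical obstacle to be the justification of the Fubini interchange and the measurability/integrability bookkeeping: one needs the integrand to be finite and the relevant integrals to converge, which is where the standing assumption $\int_{\X\times\R_+}\gamma\,\kappa(x;\theta)\,\nu(\dd\theta\,\dd\gamma)<\infty$ enters, guaranteeing $g$ is $\nu$-integrable and that the conditional intensity is a.s.\ locally finite. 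The algebraic manipulation itself is routine once the two applications of the Poisson Laplace formula are correctly nested.
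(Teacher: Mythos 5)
Your proposal is correct and follows the paper's own proof exactly: condition on $\Lambda$, apply the Poisson Laplace functional formula to $\Phi\mid\Lambda$, interchange the order of integration to express the conditional expectation as $e^{-\Lambda(g)}$, and then apply the Poisson Laplace functional formula again to $\Lambda\sim\textsc{pp}(\nu)$. The paper performs the same two nested applications of the formula (without dwelling on the Fubini justification, which your integrability remark correctly supplies).
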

\begin{proof}
    See Appendix \ref{app:auxiliary_proofs}.
\end{proof}

The second lemma describes the Laplace functional, the mean measure and the reduced Palm version of the processes $\Phi_{\zeta_{\underline{x}_h}}$ appearing in the characterization of the reduced Palm distributions of the \textsc{sncp} in \Cref{thm:red_palm_sncp}.
\begin{lemma}\label{lemma:laplace_cox_terms}
Let $\underline{x}= (x_1,\ldots,x_k)$ and $\Phi_{\zeta_{\underline{x}}}$ be such that
\[
\Phi_{\zeta_{{\underline{x}}}} \mid \zeta_{{\underline{x}}} = (\theta_{{\underline{x}}}, \gamma_{{\underline{x}}}) \sim \textsc{pp}(\gamma_{{\underline{x}}} \kappa(x; \theta_{{\underline{x}}}) \,\dd x), \quad \zeta_{{\underline{x}}} = (\theta_{{\underline{x}}}, \gamma_{{\underline{x}}}) \sim f_{{\underline{x}}}(\dd \theta\, \dd \gamma) \propto \gamma^{k} \prod_{j=1}^k \kappa(x_j; \theta) \nu(\dd \theta \, \dd \gamma).
\]
The following holds true.
\begin{itemize}
    \item[(i)] For any measurable $f:\X \to \R$, the Laplace functional of $\Phi_{\zeta_{\underline{x}}}$ is equal to 
\[
\mathcal{L}_{\Phi_{\zeta_{\underline{x}}}}(f) = \int_{\X\times\R_+} \exp\left\{ -\gamma \int_{\X} (1 - \exp\{- f(x)\}) \kappa(x; \theta) \dd x \right\} \gamma^k \prod_{j=1}^k \kappa(x_j;  \theta)\nu(\dd \theta\, \dd \gamma) / \eta(\underline{x}),
\]
where $\eta(\underline{x}) = \int_{\X\times\R_+} \gamma^k \prod_{j=1}^k \kappa(x_j; \theta) \nu(\dd \theta \,\dd \gamma)$.\\
\item[(ii)] The mean measure of $\Phi_{\zeta_{\underline{x}}}$ equals
\begin{equation}\label{eq:mean_measure_components}
M_{\Phi_{\zeta_{\underline{x}}}}(\dd y) = \eta(\underline{x}, \dd y)/\eta(\underline{x}).    
\end{equation}
\item[(iii)] The reduced Palm version of $\Phi_{\zeta_{\underline{x}}}$ at $y\in \X$ corresponds to
\begin{equation}\label{eq:red_Palm_components}
    \left( \Phi_{\zeta_{{\underline{x}}}} \right)^!_{y} \deq \Phi_{\zeta_{({\underline{x}}, y)}}
\end{equation}
\end{itemize}
\end{lemma}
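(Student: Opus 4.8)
The plan is to prove all three parts by conditioning on the mixing variable $\zeta_{\bm x} = (\theta, \gamma)$ and exploiting that, given $\zeta_{\bm x}$, the process $\Phi_{\zeta_{\bm x}}$ is a Poisson process with intensity $\gamma \kappa(\cdot; \theta)\,\dd x$, for which the Laplace functional, the mean measure, and the reduced Palm version are all classical; I then average over the reweighted mixing law $f_{\bm x}$, whose normalizing constant is $\eta(\bm x)$. For (i), I recall that a Poisson process with intensity $\mu$ has Laplace functional $\exp\{-\int(1-e^{-f})\dd\mu\}$; applying this conditionally on $\zeta_{\bm x}=(\theta,\gamma)$ and taking expectation over $\zeta_{\bm x}\sim f_{\bm x}$ gives the claimed integral by the tower property. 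For (ii), the conditional mean measure is $\gamma\kappa(y;\theta)\dd y$, so averaging over $f_{\bm x}$ yields $M_{\Phi_{\zeta_{\bm x}}}(\dd y) = \eta(\bm x)^{-1}\int \gamma^{k+1}\kappa(y;\theta)\prod_{j=1}^k\kappa(x_j;\theta)\nu(\dd\theta\,\dd\gamma)\dd y$, and recognizing the inner integral as $\eta(\bm x,\dd y)$ proves \eqref{eq:mean_measure_components}.

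Part (iii) is the main step, which I would settle through the Campbell identity underlying \eqref{eq:der_thm1}, namely $\E[\Phi(g)e^{-\Phi(f)}] = \int g(y)\mathcal{L}_{\Phi_y}(f)M_\Phi(\dd y)$ together with $\mathcal{L}_{\Phi_y}(f) = e^{-f(y)}\mathcal{L}_{\Phi_y^!}(f)$. Conditioning on $\zeta_{\bm x}=(\theta,\gamma)$ and invoking the Slivnyak--Mecke formula for Poisson processes (whose reduced Palm version equals the process itself), one obtains that $\E[\Phi_{\zeta_{\bm x}}(g)e^{-\Phi_{\zeta_{\bm x}}(f)}\mid\zeta_{\bm x}]$ equals $\int g(y)e^{-f(y)}\mathcal{L}_{\Phi_{\zeta_{\bm x}}\mid\zeta_{\bm x}}(f)\gamma\kappa(y;\theta)\dd y$. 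Averaging over $f_{\bm x}$ and interchanging integrals, the extra factor $\gamma\kappa(y;\theta)$ turns the density of $f_{\bm x}$ into $\gamma^{k+1}\kappa(y;\theta)\prod_{j=1}^k\kappa(x_j;\theta)$, i.e.\ $\eta(\bm x, y)$ times the density of $f_{(\bm x, y)}$; by part (i) the inner $(\theta,\gamma)$ integral then collapses to $\mathcal{L}_{\Phi_{\zeta_{(\bm x, y)}}}(f)$. One is left with $\E[\Phi_{\zeta_{\bm x}}(g)e^{-\Phi_{\zeta_{\bm x}}(f)}] = \int g(y)e^{-f(y)}\mathcal{L}_{\Phi_{\zeta_{(\bm x, y)}}}(f)M_{\Phi_{\zeta_{\bm x}}}(\dd y)$, the mean measure from (ii) supplying exactly the ratio $\eta(\bm x,\dd y)/\eta(\bm x)$. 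Matching against the Campbell identity gives $\mathcal{L}_{(\Phi_{\zeta_{\bm x}})_y^!}(f) = \mathcal{L}_{\Phi_{\zeta_{(\bm x, y)}}}(f)$, and uniqueness of the Laplace functional yields \eqref{eq:red_Palm_components}.

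The main obstacle is the bookkeeping in (iii): one must check that the factor $\gamma\kappa(y;\theta)$ produced by the Palm operation reweights the $k$-point mixing law $f_{\bm x}$ into the $(k+1)$-point law $f_{(\bm x, y)}$, and that the normalization ratio $\eta(\bm x, y)/\eta(\bm x)$ arising in the process coincides with the mean-measure density from (ii). Once this is verified, the Poisson Slivnyak--Mecke identity carries the probabilistic content and the conclusion follows by uniqueness of Palm kernels.
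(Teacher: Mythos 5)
Your parts (i) and (ii) coincide with the paper's proof: condition on $\zeta_{\bm x}$, use the classical Poisson formulas, and average over $f_{\bm x}$. For part (iii), however, you take a genuinely different route. The paper invokes the structural result \citep[Proposition 3.2.5]{BaBlaKa} that the reduced Palm version of a Cox process directed by a random measure $\mu$ is again a Cox process directed by the Palm version $\mu_y$ of the directing measure; the problem is thereby reduced to computing the Palm kernel of the random measure $\mu(\dd x)=\gamma_{\bm x}\kappa(x;\theta_{\bm x})\,\dd x$, which the paper does via the Laplace-functional characterization for random measures, finding that $\mu_y$ has mixing law $f_{(\bm x,y)}$. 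You instead compute the Campbell measure of $\Phi_{\zeta_{\bm x}}$ head-on: conditioning on $\zeta_{\bm x}$, applying Slivnyak--Mecke to the conditional Poisson process, and observing that the extra factor $\gamma\kappa(y;\theta)$ reweights $f_{\bm x}$ into $\frac{\eta(\bm x,y)}{\eta(\bm x)}f_{(\bm x,y)}$, so that the inner integral collapses (by part (i)) to $\mathcal{L}_{\Phi_{\zeta_{(\bm x,y)}}}(f)$ and the mean measure from part (ii) emerges as the correct disintegrating measure; uniqueness of the Palm disintegration and of Laplace functionals then gives \eqref{eq:red_Palm_components} for $M_{\Phi_{\zeta_{\bm x}}}$-almost all $y$ (the standard qualifier for Palm statements, which you should make explicit). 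Your argument is more self-contained, needing only Slivnyak--Mecke and the defining disintegration rather than the Cox-process Palm theorem; the paper's is shorter once that theorem is granted and makes the ``Cox in, Cox out'' structure transparent. Both are correct.
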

\begin{proof}
    See Appendix \ref{app:auxiliary_proofs}.
\end{proof}

The third lemma describes the $k$-th order factorial moment measure of a \textsc{sncp}. To state this result in a rigorous way, let us introduce the space $\calT_k$ as follows. Let $\underline{t} = (t_1,\ldots,t_k)$ be a vector of natural numbers such that $\max_j t_j = |\underline{t}|$, where $|\underline{t}|$ denotes the number of distinct values in $\underline{t}$. Consider the partition $\calP_k = \{\calC_1,\ldots,\calC_{|\underline{t}|}\}$ of $\{1,\ldots,k\}$ induced by the ties in $\underline{t}$, such that $j \in \calC_h$ iff $t_j = h$. Then, the space $\calT_k$ contains the equivalence classes of $\underline{t}$ inducing the same partition $\calP_k$. This is the correct space where the vector of latent indicators $\underline{T}$ takes value in \Cref{thm:red_palm_sncp} and \Cref{thm:janossy_sncp}. 
\begin{lemma}\label{lemma:k_factorial_sncp}
Let $\Phi \sim \textsc{sncp}(\kappa, \nu)$. The $k$-th order factorial moment measure, denoted with $M^{(k)}_{\Phi}$, is equal to 
\[
M^{(k)}_{\Phi}(\dd \underline{x}) = \sum_{\underline{t} \in \calT_k} \prod_{h=1}^{|\underline{t}|} \eta(\underline{x}_h)\dd \underline{x},
\]
where $\underline{x}_h = (x_j: t_j = h)$, for $h=1,\ldots,|\underline{t}|$.
\end{lemma}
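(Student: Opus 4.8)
The plan is to compute the $k$-th order factorial moment measure $M^{(k)}_\Phi$ directly from the Laplace functional of $\Phi$ given in Lemma \ref{lemma:laplace_sncp}. Recall that factorial moment measures are obtained by repeated functional differentiation of the Laplace functional: writing $\mathcal{L}_\Phi(f)$ with $f$ replaced by a finite linear combination $-\sum_{j} s_j \indicator_{B_j}$ (or, more cleanly, by using the probability generating functional $G_\Phi(u) = \mathcal{L}_\Phi(-\log u)$), the $k$-th factorial moment density is recovered by taking $k$ functional derivatives and evaluating at $f \equiv 0$ (equivalently $u \equiv 1$). Concretely, I would set $u(x) = 1 - \sum_{j=1}^k s_j \indicator_{\dd x_j}(x)$ over infinitesimal neighborhoods of $x_1, \ldots, x_k$ and extract the coefficient of $s_1 \cdots s_k$ in $G_\Phi(u)$, which yields $M^{(k)}_\Phi(\dd x_1 \cdots \dd x_k)$.

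First I would rewrite the Laplace functional of Lemma \ref{lemma:laplace_sncp} in generating-functional form, obtaining
\[
G_\Phi(u) = \exp\left\{ -\int_{\X\times\R_+}\left(1 - e^{-\gamma \int_\X (1 - u(x))\kappa(x;\theta)\dd x}\right)\nu(\dd\theta\,\dd\gamma)\right\}.
\]
Then I would carry out the differentiation in two stages. The inner exponent $A(\theta,\gamma) := \gamma\int_\X(1-u(x))\kappa(x;\theta)\dd x$ is linear in the perturbations $s_j$, so differentiating $A$ with respect to $s_j$ simply brings down a factor $\gamma\kappa(x_j;\theta)$. The outer structure is $\exp\{-\int(1 - e^{A})\nu\}$; applying the Faà di Bruno / cumulant-to-moment combinatorics to this composition produces a sum over partitions of the index set $\{1,\ldots,k\}$. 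This is precisely where the partition structure enters: each block $\calC_h$ of the partition corresponds to one factor of $\R_+$-integral $\int_{\X\times\R_+} \gamma^{n_h}\prod_{j\in\calC_h}\kappa(x_j;\theta)\nu(\dd\theta\,\dd\gamma) = \eta(\bm x_h)$, since the $n_h$ derivatives hitting the same inner exponential each contribute a $\gamma\kappa(x_j;\theta)$ and the exponential evaluates to $1$ at $u\equiv 1$.

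The main obstacle is the bookkeeping of the partition combinatorics: I must verify that the exponential-of-an-integral structure generates exactly a sum over all partitions of $\{1,\ldots,k\}$ with no spurious coefficients, and that the diffuseness of $\nu$ (assumed via $\nu(\dd\theta\,\dd\gamma) = \rho(\dd\gamma)G_0(\dd\theta)$-type measures being locally finite and diffuse) rules out lower-order degenerate contributions. Identifying the sum over set-partitions with the sum over $\bm t \in \calT_k$ requires matching the equivalence-class construction of $\calT_k$ defined just before the lemma. I would handle this either by a clean induction on $k$, peeling off one conditioning point and invoking the recursive relation $M^{(k)}_\Phi(\dd\bm x) = M_{\Phi_{x_k}^!}^{(k-1)}(\dd x_1\cdots\dd x_{k-1})\,M_\Phi(\dd x_k)$ together with Theorem \ref{thm:red_palm_sncp} and Lemma \ref{lemma:laplace_cox_terms}(ii)--(iii), or directly via the generating-functional differentiation; the recursive route is likely cleaner because it reuses the already-established reduced-Palm representation and sidesteps the full Faà di Bruno expansion.
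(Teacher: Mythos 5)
Your proposal is correct, but it follows a genuinely different route from the paper. The paper's proof never touches the Laplace functional: it conditions on the directing Poisson process $\Lambda$, uses the fact that $\Phi\mid\Lambda$ is Poisson so that $M^{(k)}_{\Phi\mid\Lambda}$ factorizes into a product of conditional mean measures, rewrites that product as an integral against the $k$-th \emph{power} measure $\Lambda^k$, and then invokes the decomposition of $M^k_\Lambda$ into factorial moment measures over set partitions (Lemma 14.E.4 of the Baccelli--Błaszczyszyn--Karray reference) together with $M^{(q)}_\Lambda=\nu^{\otimes q}$ for Poisson $\Lambda$. The partition sum thus emerges from the moment combinatorics of $\Lambda$, not from Fa\`a di Bruno. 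Your route --- $k$-fold functional differentiation of $G_\Phi(u)=\exp\{-\int(1-e^{-\gamma\int(1-u)\kappa\,\dd x})\nu\}$ at $u\equiv 1$ --- is sound and lands on the same answer: the exponential generates the sum over partitions, each block $J$ contributes $(-1)^{|J|}\eta(\bm x_J)\prod_{j\in J}\dd x_j$ since the inner exponent is linear in the perturbations and evaluates to $0$ at $u\equiv 1$, and the signs cancel against the $(-1)^k$ relating the $s$-derivatives of the p.g.fl.\ to $M^{(k)}_\Phi$. What the paper's approach buys is that it avoids justifying the interchange of functional differentiation with the integrals (and the expansion of the p.g.fl.\ in factorial moment measures, which itself needs integrability hypotheses); what yours buys is self-containedness given Lemma \ref{lemma:laplace_sncp} and a computation that is standard for Cox processes. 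Two caveats on your side remarks: the diffuseness of $\nu$ plays no real role here (absolute continuity with respect to $\dd\bm x$ comes from $\kappa$ being a density, not from $\nu$), and your alternative inductive route via $M^{(k)}_\Phi(\dd\bm x)=M^{(k-1)}_{\Phi^!_{x_k}}(\dd x_1\cdots\dd x_{k-1})M_\Phi(\dd x_k)$ is non-circular (Theorem \ref{thm:red_palm_sncp} does not rely on this lemma) but is not obviously cleaner, since $\Phi^!_{x_k}$ is a mixture of superpositions and computing its factorial moment measures reintroduces exactly the partition bookkeeping you were hoping to sidestep.
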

\begin{proof}
    See Appendix \ref{app:auxiliary_proofs}.
\end{proof}

The last theorem characterizes the joint distribution of the number of points $N$ of $\Phi$, when $\Phi$ is a \textsc{sncp}, together with the latent partition among its points, determined by the $\Phi_i$'s (see  Section \ref{sec:sncp_inference}). This result turns out to be useful for proving  \Cref{thm:red_palm_sncp}.

\begin{theorem}\label{prop:prior_partition_sncp}
Consider $\Phi \sim \textsc{sncp}(\kappa, \nu)$, for some $\kappa$ and $\int_{\X\times \R_+} (1-e^{-\gamma} ) \nu(\dd\theta \, \dd \gamma) < \infty$. The joint distribution on the number of points and the latent partition induced by $\Phi$ is
\[
\prob(N, \{\calC_1,\ldots, \calC_C\}) = \frac{1}{N!} e^{- \int_{\X\times\R_+}(1-e^{-\gamma}) \nu(\dd\theta \, \dd \gamma)} \prod_{h = 1}^C \int_{\X\times \R_+} e^{-\gamma} \gamma^{n_h} \nu(\dd\theta \, \dd \gamma), 
\]
where $n_h = | \calC_h |$, $N = \sum_{h=1}^C n_h$. Clearly, this law does not depend on the kernel $\kappa$.
\end{theorem}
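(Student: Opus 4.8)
The plan is to reduce the statement to a counting argument about a Poisson process of ``parents'' carrying independent offspring counts, and then to translate the resulting cluster-size profile into a labelled set partition. First I would invoke the cluster representation of the \textsc{sncp} recalled in Section \ref{sec:sncp_definition}: writing $\Lambda = \sum_{i\geq 1}\delta_{(\theta_i,\gamma_i)} \sim \textsc{pp}(\nu)$ and $\Phi_i \mid \Lambda \sim \textsc{pp}(\gamma_i\kappa(\cdot;\theta_i)\,\dd x)$, the cluster size $N_i := \Phi_i(\X)$ satisfies $N_i\mid\Lambda \sim \mathrm{Poisson}(\gamma_i)$ independently across $i$, since $\kappa(\cdot;\theta_i)$ integrates to one. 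This already explains why the law cannot depend on $\kappa$: the kernel only governs the (integrated-out) locations of the offspring, whereas both $N$ and the induced partition are functions of the counts $(N_i)_i$ alone.

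Next I would attach to each parent atom its offspring count as a mark. By the marking theorem for Poisson processes, $\sum_i \delta_{(\theta_i,\gamma_i,N_i)}$ is Poisson on $\X\times\R_+\times\Z_{\geq 0}$ with intensity $\tfrac{\e^{-\gamma}\gamma^{n}}{n!}\,\nu(\dd\theta\,\dd\gamma)$ on the slice $\{N=n\}$. The observed (nonempty) clusters are exactly the parents with mark $n\geq 1$; by the restriction theorem this is again a Poisson process, and the numbers $K_n$ of observed clusters of size $n$ are independent with $K_n \sim \mathrm{Poisson}(\mu_n)$, where $\mu_n = \int_{\X\times\R_+} \tfrac{\e^{-\gamma}\gamma^{n}}{n!}\,\nu(\dd\theta\,\dd\gamma)$. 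Summing, $\sum_{n\geq 1}\mu_n = \int_{\X\times\R_+}(1-\e^{-\gamma})\,\nu(\dd\theta\,\dd\gamma)$, which is finite by hypothesis; this both guarantees that $C=\sum_n K_n$ and $N=\sum_n n K_n$ are a.s.\ finite and produces the exponential prefactor $\e^{-\int(1-\e^{-\gamma})\nu}$ in the target formula.

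It then remains to pass from the size profile $(K_n)_{n\geq 1}$ --- the natural output of the Poisson bookkeeping --- to the probability of a specific labelled set partition $\{\calC_1,\ldots,\calC_C\}$, and this is the step I expect to be the crux. The probability of the profile is $\prob((K_n)_n) = \e^{-\sum_n\mu_n}\prod_{n\geq 1}\mu_n^{K_n}/K_n!$. Because the offspring are exchangeable, every set partition of $\{1,\ldots,N\}$ with the same block-size profile is equiprobable, so I would divide $\prob((K_n)_n)$ by the number $N!/\big(\prod_n (n!)^{K_n}\,\prod_n K_n!\big)$ of such partitions. After cancellation the $K_n!$ factors disappear and each block of size $n$ contributes $n!\,\mu_n = \int_{\X\times\R_+}\e^{-\gamma}\gamma^{n}\,\nu(\dd\theta\,\dd\gamma)$, yielding $\prob(N,\{\calC_1,\ldots,\calC_C\}) = \tfrac{1}{N!}\,\e^{-\int(1-\e^{-\gamma})\nu}\prod_{h=1}^{C}\int_{\X\times\R_+}\e^{-\gamma}\gamma^{n_h}\,\nu(\dd\theta\,\dd\gamma)$, as claimed. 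The main care needed is in fixing the exchangeable labelling convention for the points and justifying the combinatorial factor; a clean alternative would be to compute $\prob(N,\{\calC_h\})$ directly by conditioning on $\Lambda$, integrating the offspring locations against $\prod_h\kappa(\cdot;\theta)$ (which integrate to one, confirming the $\kappa$-independence) and taking the expectation over $\Lambda$ via the Poisson Campbell/Laplace formula, arriving at the same product form without passing through the size profile.
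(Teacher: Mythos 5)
Your proof is correct, but it takes a genuinely different route from the paper's. The paper conditions on $\Lambda$, writes the probability of the ordered configuration $((\theta^*_1,N^*_1),\ldots,(\theta^*_C,N^*_C))$ as an integral against the $C$-th power of $\Lambda$, observes that the off-diagonal restriction allows replacing $\Lambda^C$ by the factorial power $\Lambda^{(C)}$, and then applies the Campbell--Little--Mecke formula together with Slivnyak's property of the Poisson process $\Lambda$ (so that $M_\Lambda^{(C)}=\nu^{C}$ and the reduced Palm expectation of the Laplace functional factors out as $\e^{-\int(1-\e^{-\gamma})\nu}$); the conversion to labelled partitions is done at the end via a multinomial coefficient. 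You instead mark each parent atom of $\Lambda$ with its Poisson$(\gamma_i)$ offspring count, invoke the marking and restriction theorems to conclude that the counts $K_n$ of nonempty clusters of size $n$ are independent Poisson with means $\mu_n=\int \e^{-\gamma}\gamma^n/n!\,\nu(\dd\theta\,\dd\gamma)$, and convert the size-profile law into the partition law by dividing by $N!/\bigl(\prod_n (n!)^{K_n}K_n!\bigr)$; the algebra checks out, since each size-$n$ block contributes $n!\,\mu_n=\int\e^{-\gamma}\gamma^n\,\nu(\dd\theta\,\dd\gamma)$ and the normalization $\sum_n\mu_n=\int(1-\e^{-\gamma})\,\nu$ yields the exponential prefactor. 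Your approach is more elementary (no Palm calculus on $\Lambda$) and makes the $\kappa$-independence and the a.s.\ finiteness of $N$ and $C$ immediate from the hypothesis $\int(1-\e^{-\gamma})\nu<\infty$; its cost is that the equiprobability of all partitions with a given block-size profile must be justified by an explicit exchangeable labelling of the points, a step you correctly flag as the crux --- the paper faces the same issue but resolves it with the single identity $\prob((\theta^*_h,\calC_h)_h)=\binom{N}{N^*_1,\ldots,N^*_C}^{-1}\prob((\theta^*_h,N^*_h)_h)$. The paper's route also retains the joint law with the cluster locations $\theta^*_h$ along the way, which is the form reused in its proof of the Janossy density, whereas your argument discards the locations from the outset.
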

\begin{proof}
    See Appendix \ref{app:auxiliary_proofs}.
\end{proof}

\subsection{Proof of \Cref{thm:red_palm_sncp}: the Palm distributions of shot noise Cox processes}

To prove \Cref{thm:red_palm_sncp} we proceed by induction. For $k=1$ the result is given in \cite{Mo03Cox} and for $k=2$ the result can be shown by direct calculation thanks to the Palm algebra. Suppose now the statements hold for a general $k$-tuple 
$\underline{x}_k = (x_1, \ldots, x_k)$ of distinct points, and let $x_{k+1} \neq x_j$, $j = 1, \ldots, k$. By the Palm algebra
\[
    \Phi^!_{\underline{x}_{k+1}} = (\Phi^!_{\underline{x}_k})_{x_{k+1}}.
\]
Observe that $\Phi^!_{\underline{x}_{k}}$ has a mixture representation (by the induction hypothesis). Let $\underline{t}_k = (t_1, \ldots, t_k) \in \calT_k$ be indicators describing a partition of $\underline{x}_k$ into $|\underline{t}|$ clusters. For ease of notation, define $\Psi_{\underline{t}_k} = \Phi + \sum_{h=1}^{|\underline{t}_k|} \Phi_{\zeta_{\underline{x}_h}}$ where the $\Phi_{\zeta_{\underline{x}_h}}$'s are as in the statement of the theorem. Then
\[
    \plaw_{\Phi^!_{\underline{x}_k}} = \sum_{\underline{t}_k \in \calT_k} \prob(\underline{T}_k = \underline{t}_k)  \plaw_{\Psi_{\underline{t}_k}}.
\]
An application of \Cref{prp:palm_mixture} yields
\[
    \plaw_{\Phi^!_{\underline{x}_{k+1}}} =  \sum_{\underline{t}_k \in \calT_k} w_{\underline{t}_k}(x_{k+1})  \plaw^{! x_{k+1}}_{\Psi_{\underline{t}_k}},
\]
where $\plaw^{!x_{k+1}}_{\Psi_{\underline{t}_k}}$ is the law of the process $\left(\Phi + \sum_{h=1}^{|\underline{t}_k|} \Phi_{\zeta_{\underline{x}_h}}\right)^!_{x_{k+1}}$ and, by \Cref{lemma:laplace_cox_terms},
\[
    w_{\underline{t}_k}(x_{k+1}) \propto \prob(\underline{T}_k = \underline{t}_k) \left(\eta(x_{k+1}) + \sum_{h=1}^{\underline{t}_k} \frac{\eta(\underline{x}_h, x_{k+1})}{\eta(\underline{x}_k)} \right) =: \prob(\underline{T}_k = \underline{t}_k) \lambda_{\underline{t}}(x_{k+1}) .
\]
Moreover
\[
  \left(\Phi + \sum_{h=1}^{|\underline{t}_k|} \Phi_{\zeta_{\underline{x}_h}}\right)^!_{x_{k+1}} \deq \begin{cases}
      \Phi + \Phi_{\zeta_{x_{k+1}}} + \sum_{h=1}^{|\underline{t}_k|} \Phi_{\zeta_{\underline{x}_h}} & \text{with prob. } \frac{\eta(x_{k+1})}{\lambda_{\underline{t}}(x_{k+1})} \\
      \Phi + \Phi_{\zeta_{(\underline{x}_j, x_{k+1})}} + \sum_{h \neq j} {|\underline{t}_k|} \Phi_{\zeta_{\underline{x}_h}}  & \text{with prob. } \frac{\eta(\underline{x}_h, x_{k+1}) / \eta (\underline{x}_h)}{\lambda_{\underline{t}}(x_{k+1})}
  \end{cases}  
\]
Putting things together, we obtain
\[
    \Phi^!_{\underline{x}_{k+1}} \deq 
    \begin{cases}
        \Phi + \Phi_{\zeta_{x_{k+1}}} + \sum_{h=1}^{|\underline{t}_k|} \Phi_{\zeta_{\underline{x}_h}} & \text{with prob. }  w_{\underline{t}_k}(x_{k+1}) \frac{\eta(x_{k+1})}{\lambda_{\underline{t}}(x_{k+1})}  \\
        \Phi + \Phi_{\zeta_{(\underline{x}_j, x_{k+1})}} + \sum_{h \neq j} {|\underline{t}_k|} \Phi_{\zeta_{\underline{x}_h}}  & \text{with prob. } w_{\underline{t}_k}(x_{k+1})  \frac{\eta(\underline{x}_h, x_{k+1}) / \eta (\underline{x}_h)}{\lambda_{\underline{t}}(x_{k+1})}.
    \end{cases}
\]
The proof follows by observing that $w_{\underline{t}_k}(x_{k+1}) \frac{\eta(x_{k+1})}{\lambda_{\underline{t}}(x_{k+1})} \propto \prod_{h=1}^{|\underline{t}_k|} \eta(\underline{x}_h) \eta(x_{k+1})$
and $w_{\underline{t}_k}(x_{k+1})  \frac{\eta(\underline{x}_h, x_{k+1}) / \eta (\underline{x}_h)}{\lambda_{\underline{t}}(x_{k+1})} \propto \prod_{j \neq h}^{|\underline{t}|} \eta(\underline{x}_j) \eta(\underline{x}_h, x_{k+1})$.

\subsection{General statement and proof of \Cref{thm:janossy_sncp}: the Janossy measures of shot noise Cox processes}\label{app:general_janossy_sncp}

We provide the Janossy measures for finite \textsc{sncp}s, for any arbitrary intensity measure $\nu(\dd \theta\, \dd \gamma)$. \Cref{thm:janossy_sncp} corresponds to the special case with the additional assumption $\nu(\dd \theta \, \dd \gamma) = \rho(\dd \gamma) G_0(\dd \theta)$. This special case is discussed in the following statement as well. 
\begin{theorem}
    Let $\Phi \sim \textsc{sncp}(\kappa, \nu)$ such that $\Phi(\X) < \infty$ almost surely. Then, its Janossy density equals
    \begin{equation*}
        \begin{aligned}
            j_k(x_1,\ldots,x_k) = \exp\left\{ - \int_{\X\times \R_+} ( 1 - e^{-\gamma}) \nu(\dd \theta\, \dd \gamma) \right\} \sum_{\underline{t} \in \calT_k} \prod_{h=1}^{|\underline{t}|} \int_{\X\times \R_+} e^{-\gamma} \gamma^{n_h} \prod_{j: t_j = h} \kappa(x_j ; \theta) \nu(\dd \theta \,\dd \gamma).
        \end{aligned}
    \end{equation*}

 Moreover, if $\nu(\dd \theta\, \dd \gamma) = \rho(\dd \gamma) G_0(\dd \theta) $, where $G_0$ is a probability measure on $\X$, then 
\begin{equation*}
    j_k(x_1,\ldots,x_k) = k! \, \prob( \Phi(\X) = k) \,\E\left[\prod_{h=1}^{|\underline{T}|} \int_{\X}\prod_{j: T_j = h} \kappa(x_j ; \theta) G_0(\dd \theta)\right],
\end{equation*}
where the expectation is taken with respect to the indicators $\underline{T} = (T_1, \ldots, T_k) \in \calT_k$ with distribution
\begin{equation*}
\prob(\underline{T} = \underline{t}) \propto  \prod_{h=1}^{|\underline{t}|} \int_{\R_+} e^{-\gamma} \gamma^{n_h} \rho(\dd \gamma) .    
\end{equation*}
\end{theorem}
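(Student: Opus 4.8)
The plan is to reduce the computation to a single void probability via Theorem \ref{thm:janossy_general}, and then to evaluate that void probability using the explicit reduced Palm representation of the \textsc{sncp}. Since the directing intensity $\int_{\X\times\R_+}\gamma\kappa(x;\theta)\Lambda(\dd\theta\,\dd\gamma)\,\dd x$ is absolutely continuous in $x$, the process $\Phi$ is conditionally Poisson with diffuse intensity, hence a.s. simple, and mixing over $\Lambda$ preserves simplicity. Therefore the simplified form of Theorem \ref{thm:janossy_general} applies,
\[
    J_k(\dd \bm x) = \prob(\Phi^!_{\bm x}(\X) = 0)\, M^{(k)}_{\Phi}(\dd \bm x),
\]
and Lemma \ref{lemma:k_factorial_sncp} already supplies the Lebesgue density of $M^{(k)}_{\Phi}$, namely $Z(\bm x) := \sum_{\bm t \in \calT_k}\prod_{h=1}^{|\bm t|}\eta(\bm x_h)$. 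Consequently $j_k(\bm x) = Z(\bm x)\,\prob(\Phi^!_{\bm x}(\X)=0)$, and the entire problem collapses to computing the void probability of the reduced Palm version.

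For that void probability I would use the mixture representation of Theorem \ref{thm:red_palm_sncp}: conditionally on $\bm T = \bm t$, one has $\Phi^!_{\bm x} \deq \Phi + \sum_{h=1}^{|\bm t|}\Phi_{\zeta_{\bm x_h}}$ as a superposition of \emph{mutually independent} processes, so the void probability factorizes as $\prob(\Phi(\X)=0)\prod_{h=1}^{|\bm t|}\prob(\Phi_{\zeta_{\bm x_h}}(\X)=0)$, averaged against $\prob(\bm T=\bm t)\propto\prod_h\eta(\bm x_h)$. Each factor is extracted from the Laplace functionals of Lemma \ref{lemma:laplace_sncp} and Lemma \ref{lemma:laplace_cox_terms}(i) by letting $f\to\infty$; because $\kappa(\cdot;\theta)$ integrates to one, the inner term $\int_\X(1-e^{-f})\kappa\,\dd x$ tends to $1$, giving $\prob(\Phi(\X)=0)=\exp\{-\int_{\X\times\R_+}(1-e^{-\gamma})\nu(\dd\theta\,\dd\gamma)\}$ and $\prob(\Phi_{\zeta_{\bm x_h}}(\X)=0)=\eta(\bm x_h)^{-1}\int_{\X\times\R_+}e^{-\gamma}\gamma^{n_h}\prod_{j:t_j=h}\kappa(x_j;\theta)\nu(\dd\theta\,\dd\gamma)$. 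Multiplying the mixture weight $\prod_h\eta(\bm x_h)$ against the reciprocals $\eta(\bm x_h)^{-1}$ cancels the normalizations, and since the normalizing constant of $\bm T$ is exactly $Z(\bm x)$, the product $j_k(\bm x)=Z(\bm x)\,\prob(\Phi^!_{\bm x}(\X)=0)$ telescopes into the stated general formula.

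To obtain the special case $\nu(\dd\theta\,\dd\gamma)=\rho(\dd\gamma)G_0(\dd\theta)$, the inner integral splits as $\int_{\R_+}e^{-\gamma}\gamma^{n_h}\rho(\dd\gamma)\cdot\int_{\X}\prod_{j:t_j=h}\kappa(x_j;\theta)G_0(\dd\theta)$. I would group the $\gamma$-dependent factors, which up to normalization define the law of $\bm T$ in \eqref{eq:dist_T}, separately from the $\kappa$--$G_0$ factors, which are precisely the integrand of the claimed expectation. The remaining prefactor identity $\exp\{-\int_{\X\times\R_+}(1-e^{-\gamma})\nu\}\sum_{\bm t\in\calT_k}\prod_h\int_{\R_+}e^{-\gamma}\gamma^{n_h}\rho(\dd\gamma)=k!\,\prob(\Phi(\X)=k)$ then follows by summing the joint law of Theorem \ref{prop:prior_partition_sncp} over all set partitions of $\{1,\ldots,k\}$, identifying $\calT_k$ with the partitions of $[k]$ and using $\int_\X G_0(\dd\theta)=1$.

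The main obstacle I anticipate is the analytic justification of the $f\to\infty$ limits used to read off void probabilities from the Laplace functionals: one should argue (e.g.\ by monotone convergence, exploiting $\Phi(\X)<\infty$ a.s.) that $\lim_{f\to\infty}\mathcal L_\Phi(f)=\prob(\Phi(\X)=0)$ and likewise for each Palm component, rather than treat it as a formal substitution. The residual effort is purely bookkeeping — verifying that the weights $\prod_h\eta(\bm x_h)$, the reciprocals $\eta(\bm x_h)^{-1}$, and the factorial-moment density $Z(\bm x)$ cancel correctly, and confirming the combinatorial match between the sum over $\calT_k$ and the point-count marginal of Theorem \ref{prop:prior_partition_sncp}.
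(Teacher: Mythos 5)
Your proposal is correct and follows essentially the same route as the paper's proof: reduce to the void probability of $\Phi^!_{\bm x}$ via Theorem \ref{thm:janossy_general}, factorize it conditionally on $\bm T$ using Theorem \ref{thm:red_palm_sncp}, combine with Lemma \ref{lemma:k_factorial_sncp}, and identify the prefactor in the special case through Theorem \ref{prop:prior_partition_sncp}. The only cosmetic difference is that the paper evaluates the individual void probabilities by conditioning on $\Lambda$ and $\zeta_{\bm x_h}$ and using the Poisson void formula directly (together with $\int_\X \kappa(x;\theta)\,\dd x = 1$), which sidesteps the $f\to\infty$ Laplace-functional limit you flag as the main technical obstacle.
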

\begin{proof}
The proof follows from specializing \Cref{thm:janossy_general} for a \textsc{sncp}. In particular, \Cref{thm:janossy_general} states that
\begin{equation}\label{eq:janossy_two_terms}
    J_k(\dd x_1\,\ldots\, \dd x_k) = \prob(\Phi^!_{(x_1,\ldots,x_k)}(\X) = 0)  M^{(k)}_{\Phi}(\dd x_1\,\ldots\, \dd x_k).
\end{equation}

Focusing on the first term of \eqref{eq:janossy_two_terms}, and denoting with $\underline{x} = (x_1,\ldots,x_k)$,
\begin{equation}\label{eq:ev_janossy_start}
        \prob \left( \Phi^!_{\underline{x}}(\X) = 0 \right) = \E \left[ \prob \left( \Phi^!_{\underline{x}}(\X) = 0 \mid \underline{T} \right) \right] = \E \left[ \prob \left( \Phi(\X) = 0 \right) \prod_{h=1}^{|\underline{T}|} \prob \left( \Phi_{\zeta_{\underline{x}_h}}(\X) = 0 \right)  \right], 
\end{equation}
where $\underline{T}$ and the $\Phi_{\zeta_{\underline{x}_h}}$'s are introduced in \Cref{thm:red_palm_sncp}. It follows that \eqref{eq:ev_janossy_start} writes as
\begin{equation*}
   \begin{aligned}
       \prob \left( \Phi^!_{\underline{x}}(\X) = 0 \right) &= \E\left[ \prob \left( \Phi(\X) = 0 \mid \Lambda \right)\right] \E \left[ \prod_{h=1}^{|\underline{T}|} \E\left[ \prob \left( \Phi_{\zeta_{\underline{x}_h}}(\X) = 0 \mid \zeta_{\underline{x}_h}\right)\right]  \right]\\
       &= \E\left[ e^{- \int_{\X} \int_{\X\times\R_+} \gamma \kappa(x; \theta) \Lambda(\dd \theta\, \dd \gamma) \dd x} \right] \E\left[ \prod_{h=1}^{|\underline{T}|} \E\left[ e^{- \int_{\X} \gamma_{\underline{x}_h} \kappa(x; \theta_{\underline{x}_h}) \dd x} \right] \right]\\
       &= \exp\left\{ - \int_{\X\times\R_+} \left(1 - e^{- \gamma } \right) \nu(\dd \theta \, \dd \gamma) \right\} \E\left[ \prod_{h=1}^{|\underline{T}|} \int_{\X\times\R_+} e^{-\gamma } f_{\underline{x}_h}(\dd\theta \, \dd \gamma) \right]\\
       &= \exp\left\{ - \int_{\X\times\R_+} \left(1 - e^{- \gamma  } \right) \nu(\dd \theta\, \dd \gamma) \right\} \sum_{\underline{t} \in \calT_k} \left[ \prod_{h=1}^{|\underline{t}|} \int_{\X\times\R_+} e^{-\gamma } f_{\underline{x}_h}(\dd\theta \, \dd \gamma) \right] \prob(\underline{T} = \underline{t}),
   \end{aligned} 
\end{equation*}
where $f_{\underline{x}_h}$ is defined in \Cref{thm:red_palm_sncp}.  Finally, plugging this last expression in \eqref{eq:janossy_two_terms} and using \Cref{lemma:k_factorial_sncp} for $M^{(k)}_{\Phi}$, we obtain
\begin{equation*}
    \begin{aligned}
        J_k(\dd x_1\,\ldots\,\dd x_k) &= \exp\left\{ - \int_{\X\times\R_+} \left(1 - e^{- \gamma } \right) \nu(\dd \theta \,\dd \gamma) \right\} \sum_{\underline{t} \in \calT_k} \prod_{h=1}^{|\underline{t}|} \eta(\underline{x}_h) \int_{\X\times\R_+} e^{-\gamma } f_{\underline{x}_h}(\dd\theta \, \dd \gamma)\; \dd x_1\,\ldots\,\dd x_k\\
        &= \exp\left\{ - \int_{\X\times\R_+} \left(1 - e^{- \gamma} \right) \nu(\dd \theta\, \dd \gamma) \right\} \sum_{\underline{t} \in \calT_k} \prod_{h=1}^{|\underline{t}|}  \int_{\X\times\R_+} e^{-\gamma } \gamma^{n_h} \prod_{j: t_j = h} \kappa(x_j; \theta) \nu(\dd\theta \, \dd \gamma) \; \dd x_1\,\ldots\,\dd x_k,
    \end{aligned}
\end{equation*}
and the thesis is proved.\\

If $\nu(\dd \theta\, \dd \gamma) = \rho(\dd \gamma) G_0(\dd \theta)$, where $G_0$ is a probability measure, then the Janossy density boils down to
\begin{equation}\label{eq:janossy_result_specialized}
\begin{split}
 & j_k(x_1,\ldots,x_k) \\
 \quad & = \exp\left\{ - \int_{\R_+} ( 1 - e^{-\gamma}) \rho(\dd \gamma) \right\} \sum_{\underline{t} \in \calT_k} \prod_{h=1}^{|\underline{t}|} \int_{\X} \prod_{j: t_j = h} \kappa(x_j ; \theta) G_0(\dd \theta) \int_{\R_+} e^{-\gamma} \gamma^{n_h} \rho(\dd \gamma).  
\end{split}
\end{equation}
Specializing the result in \Cref{prop:prior_partition_sncp} for $\nu(\dd \theta\, \dd \gamma) = \rho(\dd \gamma) G_0(\dd \theta) $, we obtain
\[
\prob(N, \{\calC_1,\ldots, \calC_C\}) = \frac{1}{N!} e^{- \int_{\R_+}(1-e^{-\gamma}) \rho(\dd \gamma)} \prod_{h = 1}^C \int_{\R_+} e^{-\gamma} \gamma^{n_h} \rho(\dd \gamma), 
\]
and the following holds true:
\begin{equation*}
\begin{aligned}
    \prob(N = k) &= \sum_{\{\calC_1,\ldots,\calC_C\}} \prob(N = k, \{\calC_1,\ldots, \calC_C\})\\
    &= \frac{1}{k!} e^{- \int_{\R_+}(1-e^{-\gamma}) \rho(\dd \gamma)} \sum_{\{\calC_1,\ldots,\calC_C\}} \prod_{h = 1}^C \int_{\R_+} e^{-\gamma} \gamma^{n_h} \rho(\dd \gamma)\\
    &= \frac{1}{k!} e^{- \int_{\R_+}(1-e^{-\gamma}) \rho(\dd \gamma)} \sum_{\underline{t} \in \calT_k} \prod_{h = 1}^{|\underline{t}|} \int_{\R_+} e^{-\gamma} \gamma^{n_h} \rho(\dd \gamma),
\end{aligned}
\end{equation*}
where the last equality is just due to a change in notation. Therefore, \eqref{eq:janossy_result_specialized} can be written as
\begin{equation*}
\begin{aligned}
 &  j_k(x_1,\ldots,x_k) =  k!\prob(N=k) \\
    &\quad \times \sum_{\underline{t} \in \calT_k} \left[ \prod_{h=1}^{|\underline{t}|} \int_{\X} \prod_{j: t_j = h} \kappa(x_i ; \theta) G_0(\dd \theta)\right] \frac{\exp\left\{ - \int_{\R_+} ( 1 - e^{-\gamma}) \rho(\dd \gamma) \right\} \prod_{h=1}^{|\underline{t}|}  \int_{\R_+} e^{-\gamma} \gamma^{n_h} \rho(\dd \gamma)}{k!\prob(N=k)},  
\end{aligned}
\end{equation*}
where the fraction term corresponds to $\prob(\underline{T}= \underline{t})$, where the law of $\underline{T}$ is defined in the statement. In conclusion, the thesis follows by expressing the last equation as an expected value with respect to $\underline{T}$.
\end{proof}

\subsection{Proofs of the auxiliary results in Appendix \ref{app:auxiliary_results}}\label{app:auxiliary_proofs}

\subsection*{Proof of \Cref{lemma:laplace_sncp}}

The proof is straightforwardly obtained by exploiting the decomposition in \eqref{eq:sncp_definition}:
\begin{equation*}
    \begin{aligned}
        \mathcal{L}_{\Phi}(f) &= \E\left[ \exp\left\{ - \int_{\X} f(x) \Phi(\dd x) \right\} \right] = \E\left[ \E \left[ \exp\left\{ - \int_{\X} f(x) \Phi(\dd x) \right\} \mid \Lambda \right] \right]\\
        &= \E\left[ \exp\left\{ - \int_{\X} (1 - e^{-f(x)}) \sum_{j\geq 1} \gamma_j \kappa(x; \theta_j) \dd x \right\}\right]\\
        &= \E\left[ \exp\left\{ - \int_{\X\times\R_+} \gamma \int_{\X} (1 - e^{-f(x)}) \kappa(x; \theta) \dd x \, \Lambda(\dd \theta\, \dd \gamma) \right\}\right]\\
        &= \exp\left\{ - \int_{\X\times\R_+} \left( 1 - \exp\left\{ - \gamma \int_{\X} (1 - \exp\{-f(x)\}) \kappa(x; \theta) \dd x \right\} \right) \nu(\dd \theta\, \dd \gamma) \right\}.
    \end{aligned}
\end{equation*}
The third equality follows from the fact that $\Phi\mid \Lambda$ is a Poisson process, while the last step holds because $\Lambda$ is Poisson process.

\subsection*{Proof of \Cref{lemma:laplace_cox_terms}}

\emph{Point (i)}. For the Laplace functional of $\Phi_{\zeta_{\underline{x}}}$, with $\underline{x} = (x_1,\ldots,x_k)$, the proof follows from direct computation. Let $f:\X \to \R_+$ be any measurable function, then
\begin{equation*}
    \begin{aligned}
        \mathcal{L}_{\Phi_{\zeta_{\underline{x}}}}(f) &= \E\left[ \exp\left\{ - \int_{\X} f(x) \Phi_{\zeta_{\underline{x}}}(\dd x) \right\} \right] = \E\left[ \E \left[ \exp\left\{ - \int_{\X} f(x) \Phi_{\zeta_{\underline{x}}}(\dd x) \right\} \mid \zeta_{\underline{x}} \right] \right]\\
        &= \E\left[ \exp\left\{ - \int_{\X} (1 - e^{-f(x)}) \gamma_{\underline{x}} \kappa(x; \theta_{\underline{x}}) \dd x \right\}\right]\\
        &= \int_{\X\times\R_+} \exp\left\{ -\gamma \int_{\X} (1 - \exp\{- f(x)\}) \kappa(x;\theta) \dd x \right\} \gamma^k \prod_{j=1}^k \kappa(x_j; \theta)\nu(\dd \theta\, \dd \gamma) / \eta(\underline{x}).
    \end{aligned}
\end{equation*}
The third equality follows from the fact that $\Phi_{\zeta_{\underline{x}}}\mid \zeta_{\underline{x}}$ is distributed as Poisson process.\\

\emph{Point (ii)}. Still by direct computation, the mean measure of $\Phi_{\zeta_{\underline{x}}}$ writes as
\begin{equation*}
    \begin{aligned}
        M_{\Phi_{\zeta_{\underline{x}}}}(\dd y) &= \E\left[ \E\left[  \Phi_{\zeta_{\underline{x}}}(\dd y) \mid \zeta_{\underline{x}} \right] \right]
        = \E\left[ \gamma_{\underline{x}} \kappa(y; \theta_{\underline{x}}) \dd y \right] = \\
        &= \int_{\X\times \R_+} \gamma \kappa(y; \theta) \dd y \; \gamma^{k} \prod_{j=1}^k \kappa(x_j; \theta) \nu(\dd \theta \, \dd \gamma) / \eta(\underline{x})\\
        &=   \int_{\X\times \R_+} \gamma^{k+1} \kappa(y; \theta)  \; \prod_{j=1}^k \kappa(x_j; \theta) \nu(\dd \theta \, \dd \gamma) \dd y / \eta(\underline{x})\\
        &= \eta(\underline{x}, y) \dd y / \eta(\underline{x})
    \end{aligned}
\end{equation*}
and the thesis is proved.\\

\emph{Point (iii)}. By \cite[Proposition 3.2.5]{BaBlaKa}, if $\Psi$ is a Cox process directed by the random measure $\mu$, then the reduced Palm version $\Psi^!_y$ is a Cox process directed by $\mu_y$ (the Palm version of $\mu$ at $y$). Now, observe that $\Phi_{\zeta_{\underline{x}}}$ is a Cox process directed by the random measure $\mu(\dd x ) = \gamma_{{\underline{x}}} \kappa(x; \theta_{{\underline{x}}}) \,\dd x $, with $ (\theta_{{\underline{x}}}, \gamma_{{\underline{x}}}) \sim f_{{\underline{x}}}(\dd \theta \,\dd \gamma)$ and $f_{{\underline{x}}}$ is defined in the statement of the present lemma. Consequently, the reduced Palm version $ \left( \Phi_{\zeta_{{\underline{x}}}} \right)^!_{y}$ is a Cox process directed by $\mu_y$. We are then left with determining the distribution of $\mu_y$, which we derive by computing its Laplace functional $\mathcal{L}_{\mu_y}(f)$, for any measurable function $f: \X \to \R_+$. To this end, rely on \cite[Proposition 3.2.1]{BaBlaKa}, which states that, for any measurable functions $f, g: \X \to \R_+$, the Palm distribution of a random measure $\mu$ satisfies
    \begin{equation}\label{eq:bacelli_palm_laplace}
        \evalat{\frac{\partial}{\partial t} \mathcal{L}_{\mu}(f + tg)}{t=0} =  - \E\left[ \mu(g) e^{-\mu(f)} \right] = - \int_{\X} g(y) \mathcal{L}_{\mu_y}(f) M_{\mu}(\dd y),
    \end{equation}
where $\mu(f):= \int_{\X} f(x) \mu(\dd x)$. Therefore, since $\mu(\dd x ) = \gamma_{{\underline{x}}} \kappa(x; \theta_{{\underline{x}}}) \,\dd x $, the term $\E\left[ \mu(g) e^{-\mu(f)} \right]$ writes as
    \begin{align}
        \E\left[ \mu(g) e^{-\mu(f)} \right] &= \E\left[ \int_{\X} g(y) \gamma_{\underline{x}} \kappa(y; \theta_{\underline{x}}) \dd y  \cdot \exp\left\{ - \int_{\X} f(z) \gamma_{\underline{x}} \kappa(z; \theta_{\underline{x}}) \dd z  \right\} \right]\notag \\
        &=  \int_{\X\times \R_+} \int_{\X} g(y) \gamma \kappa(y; \theta) \dd y  \cdot \exp\left\{ - \int_{\X} f(z) \gamma \kappa(z; \theta) \dd z  \right\} \cdot f_{\underline{x}}(\dd\theta \, \dd \gamma)\notag \\
        &= \int_{\X} g(y)  \int_{\X\times \R_+}  \exp\left\{ - \int_{\X} f(z) \gamma \kappa(z; \theta) \dd z  \right\} \gamma \kappa(y; \theta)  f_{\underline{x}}(\dd\theta \, \dd \gamma) \cdot  \dd y.\label{eq:mu_comp_derivative}
    \end{align}
Observing that $M_\mu(\dd y) = \E[\gamma_{\underline{x}} \kappa(y; \theta_{\underline{x}}) \dd y ] = \int_{\X\times \R_+} \gamma \kappa(y; \theta) f_{\underline{x}}(\dd\theta\, \dd \gamma)\, \dd y$,  \eqref{eq:mu_comp_derivative} can be expressed as
\begin{equation*}
    \begin{aligned}
         \E\left[ \mu(g) e^{-\mu(f)} \right] = \int_{\X} g(y)  \int_{\X\times \R_+}  \exp\left\{ - \int_{\X} f(z) \gamma \kappa(z; \theta) \dd z  \right\} \frac{\gamma \kappa(y; \theta)  f_{\underline{x}}(\dd\theta \, \dd \gamma)}{\int_{\X\times \R_+} \gamma^\prime \kappa(y; \theta^\prime) f_{\underline{x}}(\dd\theta^\prime\, \dd \gamma^\prime)} \cdot  M_\mu(\dd y)
    \end{aligned}
\end{equation*}
By identification in \eqref{eq:bacelli_palm_laplace}, we see that
\begin{equation*}
        \mathcal{L}_{\mu_y}(f) = \int_{\X\times \R_+}  \exp\left\{ - \int_{\X} f(z) \gamma \kappa(z; \theta) \dd z  \right\} \frac{\gamma \kappa(y; \theta)  f_{\underline{x}}(\dd\theta \, \dd \gamma)}{\int_{\X\times \R_+} \gamma^\prime \kappa(y; \theta^\prime) f_{\underline{x}}(\dd\theta^\prime\, \dd \gamma^\prime)},
\end{equation*}
which corresponds to say that $\mu_y(\dd x) = \gamma^* \kappa(x; \theta^*) \dd x$, with 
\[
( \theta^*, \gamma^*) \sim f_*(\dd \theta\,\dd \gamma) \propto \gamma \kappa(y; \theta)  f_{\underline{x}}(\dd\theta \, \dd \gamma) \propto f_{(\underline{x}, y)}(\dd\theta \, \dd \gamma).
\]
Summing up, we have found that 
\[
\left(\Phi_{\zeta_{{\underline{x}}}}\right)^!_y \mid (\theta^*, \gamma^*) \sim \textsc{pp}(\gamma^* \kappa(x; \theta^*) \,\dd x), \qquad (\theta^*, \gamma^*) \sim f_{(\underline{x}, y)}(\dd\theta \, \dd \gamma),
\]
which is equivalent to say $\left(\Phi_{\zeta_{{\underline{x}}}}\right)^!_y \deq \Phi_{\zeta_{({\underline{x}}, y)}}$, and the thesis is proved.

\subsection*{Proof of \Cref{lemma:k_factorial_sncp}}

For any $B_1,\ldots,B_k \in \Xcr$, it holds that
\begin{align}
        M^{(k)}_{\Phi}(B_1,\ldots,B_k) &= \E\left[ \sum_{X_1,\ldots, X_k \in \Phi}^{\neq} \prod_{j=1}^k \indicator_{B_j}(X_j) \right] = \E\left[ \E\left[ \sum_{X_1,\ldots, X_k \in \Phi}^{\neq} \prod_{j=1}^k \indicator_{B_j}(X_j)\mid \Lambda \right] \right] \notag \\
        &= \E\left[ M^{(k)}_{\Phi\mid \Lambda}(B_1,\ldots,B_k) \right] = \E\left[ \prod_{j=1}^k M_{\Phi\mid \Lambda}(B_j) \right]\notag \\
        &= \E\left[ \prod_{j=1}^k \int_{B_j} \int_{\X\times\R_+} \gamma \kappa(x_j; \theta) \Lambda(\dd\theta\, \dd \gamma) \dd x_j \right] \notag \\
        &= \E\left[  \int_{(\X\times\R_+)^k} \prod_{j=1}^k \gamma_j \int_{B_j} \kappa(x_j; \theta_j) \dd x_j \Lambda^k(\dd\underline{\theta} \, \dd \underline{\gamma})  \right] \notag \\
        &= \int_{(\X\times\R_+)^k} \prod_{j=1}^k \gamma_j \int_{B_j} \kappa(x_j; \theta_j) \dd x_j M_{\Lambda}^k(\dd\underline{\theta}\, \dd \underline{\gamma}), \label{eq:k_moment_start}
\end{align}
where $\Lambda$ is the Poisson process appearing in \eqref{eq:sncp_definition} and the symbol $\not =$ over the summation in the first line means that the sum is extended over all pairwise distinct points of $\Phi$.

Moreover, from \cite[Lemma 14.E.4]{BaBlaKa}, the following holds true for any point process $\xi$ on a Polish space $\Y$ (embedded with the Borel $\sigma$-algebra $\Ycr$), for any $A_1,\ldots,A_k \in \Ycr$,
\begin{equation*}
    M_{\xi}^k(A_1,\ldots,A_k) = \sum_{q=1}^k \sum_{\{J_1,\ldots,J_q\}} M_{\xi}^{(q)}\left(\prod_{h=1}^q \left(\cap_{m \in J_h} A_m \right)\right),
\end{equation*}
where the summation is over all partitions $\{J_1,\ldots,J_q\}$ of $\{1,\ldots,k\}$. In the special case of $\xi$ being a Poisson process, using a limit argument, we obtain 
\begin{equation}\label{eq:k_moment_poisson_decomposition}
M_{\xi}^k(\dd y_1\,\ldots\,\dd y_k) = \sum_{q=1}^k \sum_{\{J_1,\ldots,J_q\}} \prod_{h=1}^q M_{\xi}(\dd y_{(J_{h})_1}) \prod_{m \in J_h} \delta_{y_{(J_{h})_1}}(\dd y_m),
\end{equation}
where $(J_{h})_1$ indicates the first index of $J_h$, according to any arbitrary ordering.

Then, since $\Lambda$ in \eqref{eq:k_moment_start} is a Poisson process, we can apply the expression in \eqref{eq:k_moment_poisson_decomposition}, leading to
\begin{equation*}
    \begin{aligned}
        M^{(k)}_{\Phi}(B_1,\ldots,B_k) &= \int_{(\X\times\R_+)^k} \left[ \prod_{j=1}^k \gamma_j \int_{B_j} \kappa(x_j; \theta_j) \dd x_j\right] \\
        &\qquad \times \sum_{q=1}^k \sum_{\{J_1,\ldots,J_q\}} \prod_{h=1}^q M_{\Lambda}(\dd \theta_{(J_{h})_1} \dd \gamma_{(J_{h})_1}) \prod_{m \in J_h} \delta_{(\theta_{(J_{h})_1}, \gamma_{(J_{h})_1})}(\dd \theta_m \, \dd \gamma_m)\\
        &= \sum_{q=1}^k \sum_{\{J_1,\ldots,J_q\}} \prod_{h=1}^q \int_{(\X\times\R_+)^{k_h} } \left[ \prod_{m \in J_h} \gamma_{m} \int_{B_m} \kappa(x_m; \theta_m) \dd x_m\right] \\
        &\qquad \times M_{\Lambda}(\dd \theta_{(J_{h})_1}\, \dd \gamma_{(J_{h})_1}) \prod_{m \in J_h} \delta_{(\theta_{(J_{h})_1}, \gamma_{(J_{h})_1})}(\dd \theta_m \,\dd \gamma_m)\\
        &= \sum_{q=1}^k \sum_{\{J_1,\ldots,J_q\}} \prod_{h=1}^q \int_{\X\times\R_+ }  \gamma^{k_h} \prod_{m \in J_h} \int_{B_{m}} \kappa(x_m; \theta) \dd x_m \; M_{\Lambda}(\dd \theta\, \dd \gamma)\\
        &= \sum_{q=1}^k \sum_{\{J_1,\ldots,J_q\}} \prod_{h=1}^q \int_{\times_{m\in J_h} B_m} \int_{\X\times\R_+ }  \gamma^{k_h} \prod_{m \in J_h}  \kappa(x_m; \theta) M_{\Lambda}(\dd \theta\, \dd \gamma) \; \prod_{m \in J_h} \dd x_m,
    \end{aligned}
\end{equation*}
where $k_h$ is the cardinality of set $J_h$. For $\underline{x}_{J_h} = (x_m: m \in J_h)$, define 
\[
\eta(\underline{x}_{J_h}) = \int_{\X\times \R_+} \gamma^{k_h} \prod_{m \in J_h} \kappa(x_m; \theta) M_{\Lambda}(\dd\theta \, \dd \gamma).
\]
Therefore, the previous expression writes as
\begin{equation*}
\begin{aligned}
    M^{(k)}_{\Phi}(B_1,\ldots,B_k) &=  \sum_{q=1}^k \sum_{\{J_1,\ldots,J_q\}} \prod_{h=1}^q \int_{\times_{m\in J_h} B_m} \eta(\underline{x}_{J_h}) \; \prod_{m \in J_h} \dd x_m\\
    &= \sum_{q=1}^k \sum_{\{J_1,\ldots,J_q\}}  \int_{B_1 \times \cdots \times B_k} \prod_{h=1}^q \eta(\underline{x}_{J_h}) \; \dd x_1\,\ldots\, \dd x_{k}\\
    &= \int_{B_1 \times \cdots \times B_k} \sum_{q=1}^k \sum_{\{J_1,\ldots,J_q\}}   \prod_{h=1}^q \eta(\underline{x}_{J_h}) \; \dd x_1\,\ldots\, \dd x_{k},
\end{aligned}   
\end{equation*}
and the thesis follows, with the introduction of the allocation variables $\underline{t}\in \calT_k$.

\subsection*{Proof of \Cref{prop:prior_partition_sncp}}

The formulation of the \textsc{sncp} as cluster process in \eqref{eq:sncp_definition} yields a clustering structure at the latent level. Based on this, the generating mechanism of the points can be described in terms of the clustering structure and the cluster-specific parameters of the kernel $\kappa$. Specifically, consider the event $((\theta^*_1,N^*_1),\ldots,(\theta^*_C,N^*_C))$, where $C$ denotes the number of clusters among the $N$ points of $\Phi$, $\theta^*_h$ represents the parameter of kernel $\kappa$ in cluster $h$, and $N^*_h$ is the number of points of $\Phi$ in cluster $h$, given a uniform random ordering of the clusters. It holds that $N= \sum_{h=1}^C N^*_h$ and the cluster-specific parameters $\theta^*_h$'s are distinct. Denote with $\underline{\theta}^* = (\theta^*_1,\ldots,\theta^*_C)$. The main computation concerns the probability of the event just described, that is
\begin{equation*}
\begin{aligned}
    \prob((\theta^*_1,N^*_1),\ldots,(\theta^*_C,N^*_C)) &= \E\left[ \prob((\theta^*_1,N^*_1),\ldots,(\theta^*_C,N^*_C) \mid \Lambda) \right]\\
    &= \frac{1}{C!} \E\left[ \int_{(\X\times\R_+)^C} \prod_{h=1}^C \frac{1}{{N^*_h}!} e^{-\gamma_h}  \gamma_h^{N^*_h} \delta_{\theta^*_h}(\theta_h) \cdot \prod_{j > C} e^{-\gamma_j} \Lambda^C (\dd \underline{\theta} \, \dd \underline{\gamma}) \right]\\
     &= \frac{1}{C!}\E\left[ \int_{(\X\times\R_+)^C}  e^{-\int_{\X\times\R_+} s \Lambda(\dd x\, \dd s)} \cdot \prod_{h=1}^C \frac{1}{{N^*_h}!}  \gamma_h^{N^*_h} \delta_{\theta^*_h}(\theta_h)  \Lambda^C (\dd\underline{\theta}\, \dd \underline{\gamma}) \right],
\end{aligned}
\end{equation*}
where $\Lambda^C$ is the $C$-th power of $\Lambda$.
Since the term $\prod_{h=1}^C \delta_{\theta^*_h}(\theta_h)$ entails that the integrand is zero on sets of the type
\[
    \{(\underline{\theta}, \underline{\gamma}) \in (\X\times\R_+)^C : \; \theta_i = \theta_j \text{ for } i \neq j \},
\]
then we can replace $\Lambda^C$ with the $C$-th factorial power $\Lambda^{(C)}$. By applying the \textsc{clm} formula, we obtain
\begin{align}
   & \prob((\theta^*_1,N^*_1),\ldots,(\theta^*_C,N^*_C)) \notag \\
    &\quad= \frac{1}{C!}\E\left[ \int_{(\X\times\R_+)^C}  e^{-\int_{\X\times\R_+} s \Lambda(\dd x\, \dd s)} \cdot \prod_{h=1}^C \frac{1}{{N^*_h}!}  \gamma_h^{N^*_h} \delta_{\theta^*_h}(\theta_h)   \Lambda^{(C)} (\dd\underline{\theta}\, \dd \underline{\gamma}) \right] \notag \\
    &\quad= \frac{1}{C!} \int_{(\X\times\R_+)^C} \E\left[ e^{-\int_{\X\times\R_+} s \Lambda^!_{\underline{\theta}, \underline{\gamma}}(\dd x \, \dd s)} \right] e^{-\sum_{h=1}^C \gamma_h} \prod_{h=1}^C \frac{1}{{N^*_h}!}  \gamma_h^{N^*_h} \delta_{\theta^*_h}(\theta_h) \;   M_\Lambda^{(C)} (\dd\underline{\theta}\, \dd \underline{\gamma}) \notag  \\
    &\quad= \frac{1}{C!} \int_{\R_+^C} \E\left[ e^{-\int_{\X\times\R_+} s \Lambda^!_{\underline{\theta}^*, \underline{\gamma}}(\dd x \, \dd s)} \right]  \prod_{h=1}^C \frac{1}{{N^*_h}!}  e^{-\gamma_h} \gamma_h^{N^*_h}   \; M_\Lambda^{(C)} (\dd\underline{\theta}^* \,\dd \underline{\gamma}). \label{eq:partition_after_clm}
\end{align}
Since $\Lambda$ is a Poisson process with intensity measure $\nu(\dd\theta\, \dd\gamma)$, then \eqref{eq:partition_after_clm} yields
\begin{equation*}
\begin{aligned}
    \prob((\theta^*_1,N^*_1),\ldots,(\theta^*_C,N^*_C))&= \frac{1}{C!}\E\left[ e^{-\int_{\X\times\R_+} s \Lambda(\dd x\, \dd s)} \right]  \int_{\R_+^C}  \prod_{h=1}^C \frac{1}{{N^*_h}!}  e^{-\gamma_h} \gamma_h^{N^*_h}   \; \nu^C (\dd\underline{\theta}^*\, \dd \underline{\gamma})\\
    &= \frac{1}{C!} e^{-\int_{\X\times\R_+} (1 - e^{-\gamma}) \nu(\dd\theta \, \dd\gamma)} \prod_{h=1}^C \int_{\R_+}   \frac{1}{{N^*_h}!}  e^{-\gamma_h} \gamma_h^{N^*_h}   \; \nu (\dd \theta_h^* \, \dd \gamma_h).
\end{aligned}
\end{equation*}
For any clustering $(\calC_1,\ldots, \calC_C)$ of the $N$ points of $\Phi$, with cluster cardinalities $N^*_h = |\calC_h|$, $h=1,\ldots,C$, note that the following relationship holds
\[
\prob((\theta^*_1,\calC_1),\ldots, (\theta^*_C,\calC_C)) = \binom{N}{N^*_1,\ldots,N^*_k}^{-1} \prob((\theta^*_1,N^*_1),\ldots,(\theta^*_C,N^*_C)).
\]
The marginal distribution of $(\calC_1,\ldots,\calC_C)$ is simply obtained by integrating out $(\theta^*_1,\ldots,\theta^*_C)$ from the previous expression, leading to
\begin{equation*}
    \prob(\calC_1,\ldots,\calC_C) =\frac{1}{N!} \frac{1}{C!} e^{-\int_{\X\times\R_+} (1 - e^{-\gamma}) \nu(\dd\theta \, \dd\gamma)} \prod_{h=1}^C \int_{\X\times\R_+}    e^{-\gamma_h} \gamma_h^{N^*_h}   \; \nu (\dd \theta_h \, \dd \gamma_h). 
\end{equation*}
Observe that $\prob(\{\calC_1,\ldots, \calC_C\})$ is obtained by multiplying by $C!$ the previous expression. In the statement of the theorem, we stress that this probability distribution describes the number of points $N$ as well.

\end{document}